\newtheorem{theorem}{Theorem}[section]
\newtheorem{corollary}[theorem]{Corollary}
\newtheorem{lemma}[theorem]{Lemma}
\newtheorem{proposition}[theorem]{Proposition}
\theoremstyle{definition}
\newtheorem{definition}{Definition}[section]
\newtheorem{example}{Example}[section]
\newtheorem*{remark}{Remark}
\begin{document}
\title{The Hopf monoid on nonnesting supercharacters of pattern groups}
\author{Scott Andrews}
\address{Department of Mathematics \\ University of Colorado Boulder \\ Boulder, CO 80309}
\email{scott.andrews@colorado.edu}
\urladdr{http://euclid.colorado.edu/~sda/}
\keywords{supercharacter, unipotent group, Hopf monoid}
\subjclass[2010]{20C33,05E10}

\begin{abstract} We construct supercharacter theories for a collection of unipotent matrix groups and produce a Hopf monoid from the supercharacters. These supercharacter theories are coarser than those defined by Diaconis--Isaacs for algebra groups and have supercharacters and superclasses indexed by nonnesting labeled set partitions. We compute the supercharacter tables and describe the product and coproduct of the Hopf monoid combinatorially. We also show that this Hopf monoid is free.
\end{abstract}
\maketitle

\section{Introduction}
Let $UT_n(\mathbb{F}_q)$ be the group of unipotent upper triangular $n \times n$ matrices with entries in the finite field with $q$ elements. Indexing the irreducible representations and conjugacy classes of $UT_n(\mathbb{F}_q)$ is a `wild' problem (see \cite{MR1056208}). It is not even known whether for a fixed $n$ the number of irreducible representations of $UT_n(\mathbb{F}_q)$ is polynomial in $q$. This conjecture of Higman (see \cite{MR0113948}) has been verified for $n \leq 13$ by Vera-L\'opez--Arregi in \cite{MR1994321}.

\bigbreak

Recent attempts towards understanding the representation theory of $UT_n(\mathbb{F}_q)$ have utilized `supercharacter theories.' Loosely speaking, a supercharacter theory of a group consists of a partition of the group into unions of conjugacy classes, along with a collection of characters of the group, such that the characters and the blocks of the partition behave like irreducible characters and conjugacy classes. Supercharacters are a generalization of the `basic characters' of $UT_n(\mathbb{F}_q)$ studied by Andr\'e in \cite{MR1839342,MR1338979,MR1896026} and Yan in \cite{yan}. The basic characters are indexed by labeled set partitions, and the values of these characters on elements of $UT_n(\mathbb{F}_q)$ have been computed in terms of the corresponding labeled set partitions. Furthermore, for a fixed $n$ the number of basic characters of $UT_n(\mathbb{F}_q)$ is polynomial in $q$.

\bigbreak

In \cite{MR2373317}, Diaconis--Isaacs define a supercharacter theory of an arbitrary finite group and generalize the construction of Yan to a large collection of subgroups of $UT_n(\mathbb{F}_q)$. A subgroup $U$ of $UT_n(\mathbb{F}_q)$ is called a \emph{pattern group} if $\{u-1 \mid u \in U\}$ is an $\mathbb{F}_q$-algebra with a basis consisting of elementary matrices (in particular, $UT_n(\mathbb{F}_q)$ is a pattern group). The construction of Diaconis--Isaacs produces a supercharacter theory for any pattern group. Diaconis--Thiem provide a supercharacter formula for an arbitrary pattern group in \cite{MR2491890}; their formula, however, is not in terms of a nice combinatorial indexing set.

\bigbreak

Pattern subgroups of $UT_n(\mathbb{F}_q)$ are naturally in bijection with posets on $\{1,2,...,n\}$ that can be extended to the usual linear order. In \cite{MR2855781}, Halasi--P\'alfy demonstrate that there exists a poset such that the number of irreducible characters of the corresponding pattern group is not polynomial in $q$. As we note in Section~\ref{secnnp}, the number of supercharacters of the pattern group corresponding to this poset is also not polynomial in $q$. This helps to explain why there is no known indexing set for supercharacters of pattern groups and no simple combinatorial formula for the supercharacter table.

\bigbreak

In this paper we construct a new, coarser supercharacter theory for an arbitrary pattern group. The supercharacters and superclasses have a nice combinatorial indexing set, and the supercharacter table is described in terms of this indexing set. For a fixed poset, the number of supercharacters of the corresponding pattern group is polynomial in $q$.

\bigbreak

Aguiar et al. construct a Hopf algebra from the supercharacters of $UT_n(\mathbb{F}_q)$ in \cite{MR2880223} and show that this Hopf algebra is isomorphic to the Hopf algebra of symmetric functions in noncommuting variables. Aguiar--Bergeron--Thiem show that this Hopf algebra comes from a Hopf monoid in \cite{MR3117506}. We produce a Hopf monoid from our supercharacters in an analogous manner and describe the product and coproduct combinatorially. In particular, we show that the resulting Hopf monoid is free.

\bigbreak

We begin by presenting background material on labeled set partitions, supercharacter theories, and Hopf monoids in Sections~\ref{seclsp}, \ref{secsctpg}, and \ref{secvshm}. In Section~\ref{secnnutn} we construct a supercharacter theory of $UT_n(\mathbb{F}_q)$ with supercharacters and superclasses indexed by `nonnesting $\mathbb{F}_q$-set partitions' (as in Section~\ref{seclsp}). We generalize this construction to arbitrary pattern groups in Section~\ref{secnnp} and in Section~\ref{sechmpg} construct a Hopf monoid from these supercharacters. Finally, in Section~\ref{seccomb} we provide combinatorial descriptions of the product and coproduct and show that this Hopf monoid is free.

\section{Labeled set partitions}\label{seclsp}

Let $[n]=\{1,2,...,n\}$. A \emph{set partition} of $[n]$ is a collection $\{S_i\}$ of nonempty subsets of $[n]$ such that $[n]$ is the disjoint union of the $S_i$. For our purposes, it will be useful to represent set partitions as \emph{arc diagrams}. Given a set partition $\nu = \{S_i\}$ of $[n]$, we consider the set of edges
\[
        \{k \frown l \mid \text{there exists } i \text{ with } k,l \in S_i,\text{ and if } k<j<l, \text{ then } j \notin S_i\},
\]
from which we construct an arc diagram. For example, consider the set partition $\{\{1,3,7\},\{2\},\{4,5,8\}\}$. Corresponding to this set partition is the arc diagram
\[
        \begin{tikzpicture}[baseline={([yshift=-.5ex]current bounding box.center)}]
	\fill (0,0) circle (.1) node[below]{1};
    \fill (.5,0) circle (.1) node[below]{2};
    \fill (1,0) circle (.1) node[below]{3};
    \fill (1.5,0) circle (.1) node[below]{4};
    \fill (2,0) circle (.1) node[below]{5};
    \fill (2.5,0) circle (.1) node[below]{6};
    \fill (3,0) circle (.1) node[below]{7};
    \fill (3.5,0) circle (.1) node[below]{8};
    \draw (0,0) to [out=45, in=135] (1,0);
    \draw (1,0) to [out=45, in=135] (3,0);
    \draw (1.5,0) to [out=45, in=135] (2,0);
    \draw (2,0) to [out=45, in=135] (3.5,0);
    \end{tikzpicture}.
\]
We will usually omit the labels from the nodes of arc diagrams. From now on we will conflate the notions of set partitions, arc diagrams, and edge sets of arc diagrams. For instance, we will write
\begin{align*}
\nu & = \{\{1,3,7\},\{2\},\{4,5,8\}\}, \\
\nu & = \{1 \frown 3, 3 \frown 7, 4 \frown 5, 5 \frown 8\},\text{ and} \\
\nu & = \begin{tikzpicture}
	\fill (0,-1) circle (.1);
    \fill (.5,-1) circle (.1);
    \fill (1,-1) circle (.1);
    \fill (1.5,-1) circle (.1);
    \fill (2,-1) circle (.1);
    \fill (2.5,-1) circle (.1);
    \fill (3,-1) circle (.1);
    \fill (3.5,-1) circle (.1);
    \draw (0,-1) to [out=45, in=135] (1,-1);
    \draw (1,-1) to [out=45, in=135] (3,-1);
    \draw (1.5,-1) to [out=45, in=135] (2,-1);
    \draw (2,-1) to [out=45, in=135] (3.5,-1);
    \end{tikzpicture},
\end{align*}
all referring to the same set partition. Note that the set partition can be recovered from the arc diagram by considering the connected components. An arc diagram $\nu$ will correspond to a set partition if and only if for all $i \frown j \in\nu$ and $i <k<j$, $i \frown k,k\frown j \notin \nu$.

\bigbreak

One advantage to representing set partitions as arc diagrams is that we can introduce labels to the arcs. If $\mathbb{F}_q$ is the finite field with $q$ elements, an $\mathbb{F}_q$\emph{-set partition} of $[n]$ is a labeled arc diagram of the form
\[
        \nu = \{i \overset{a}{\frown}j \mid a \in \mathbb{F}_q^\times \text{ and if }i <k<j, \text{ then } i \overset{b}{\frown} k,k\overset{c}{\frown} j \notin \nu\}.
\]
If $\eta$ is a set partition, we will say that $\eta$ is \emph{nonnesting} if there are no $i<j<k<l$ such that $i \frown l,j\frown k \in \eta$.

\begin{example} Let
\begin{align*}
        \eta &= \begin{tikzpicture}[baseline={([yshift=-.5ex]current bounding box.center)}]
	\fill (0,0) circle (.1) node[below] {$1$};
    \fill (.5,0) circle (.1) node[below] {$2$};
    \fill (1,0) circle (.1) node[below] {$3$};
    \fill (1.5,0) circle (.1) node[below] {$4$};
    \fill (2,0) circle (.1) node[below] {$5$};
    \fill (2.5,0) circle (.1) node[below] {$6$};
    \fill (3,0) circle (.1) node[below] {$7$};
    \fill (3.5,0) circle (.1) node[below] {$8$};
    \draw (0,0) to [out=45, in=135] (1,0);
    \draw (1,0) to [out=45, in=135] (3,0);
    \draw (0.5,0) to [out=45, in=135] (2,0);
    \draw (2,0) to [out=45, in=135] (3.5,0);
    \end{tikzpicture} \quad\text{and} \\
    \nu &= \begin{tikzpicture}[baseline={([yshift=-.5ex]current bounding box.center)}]
	\fill (0,0) circle (.1) node[below] {$1$};
    \fill (.5,0) circle (.1) node[below] {$2$};
    \fill (1,0) circle (.1) node[below] {$3$};
    \fill (1.5,0) circle (.1) node[below] {$4$};
    \fill (2,0) circle (.1) node[below] {$5$};
    \fill (2.5,0) circle (.1) node[below] {$6$};
    \fill (3,0) circle (.1) node[below] {$7$};
    \fill (3.5,0) circle (.1) node[below] {$8$};
    \draw (0,0) to [out=45, in=135] (1,0);
    \draw (1,0) to [out=45, in=135] (3,0);
    \draw (1.5,0) to [out=45, in=135] (2,0);
    \draw (2,0) to [out=45, in=135] (3.5,0);
    \end{tikzpicture};
\end{align*}
then $\eta$ is a nonnesting set partition but $\nu$ is not, as $3 \frown 7, 4 \frown 5$ is a nesting.
\end{example}

If the underlying set partition of an $\mathbb{F}_q$-set partition is nonnesting, we will call that $\mathbb{F}_q$-set partition nonnesting. Following the notation from \cite{MR2880659}, define
\begin{align*}
        \Pi(n,q) & = \{\mathbb{F}_q\text{-set partitions of } [n]\} \quad \text{and} \\
        \text{NN}(n,q) & = \{\text{nonnesting }\mathbb{F}_q\text{-set partitions of } [n]\}.
\end{align*}

Let $I$ be a finite set and let $\mathcal{P}$ be a poset on $I$. A $\mathcal{P}$\emph{-set partition} of $I$ is an arc diagram $\nu$ such that
\begin{enumerate}
\item if $i \frown j \in \nu$ then $i \prec_\mathcal{P} j$; and
\item for all $i \frown j \in \nu$ and $i \prec_\mathcal{P} k \prec_\mathcal{P} j$, we have $i \frown k,k \frown j \notin \nu$.
\end{enumerate}
If $\mathcal{P}$ is the usual linear order on $[n]$ then a $\mathcal{P}$-set partition is the same as a set partition.

\begin{example} Let $I = \{1,2,3,4,5,6\}$ and let $\mathcal{P}$ be the poset on $I$ with Hasse diagram
\[
    \begin{tikzpicture}[baseline={([yshift=-.5ex]current bounding box.center)}]
    \node (e) at (0,2) {$5$};
    \node (a) at (-3,0) {$1$};
    \node (b) at (-1,0) {$2$};
    \node (c) at (1,0) {$3$};
    \node (d) at (3,0) {$4$};
    \node (f) at (0,-2) {$6$};
    \draw (e) -- (a);
    \draw (e) -- (b) -- (f) -- (c) -- (e) -- (d);
\end{tikzpicture}.
\]
We have that
\[
    \nu = \begin{tikzpicture}[baseline={([yshift=-.5ex]current bounding box.center)}]
	\fill (0,0) circle (.1) node[below]{6};
    \fill (.5,0) circle (.1) node[below]{1};
    \fill (1,0) circle (.1) node[below]{2};
    \fill (1.5,0) circle (.1) node[below]{3};
    \fill (2,0) circle (.1) node[below]{4};
    \fill (2.5,0) circle (.1) node[below]{5};
    \draw (0,0) to [out=45, in=135] (1,0);
    \draw (0,0) to [out=45, in=135] (1.5,0);
    \draw (.5,0) to [out=45, in=135] (2.5,0);
    \draw (2,0) to [out=45, in=135] (2.5,0);
    \end{tikzpicture}
\]
is a valid $\mathcal{P}$-set partition of $I$. Note that in representing the arc diagram as above we are implicitly viewing $\mathcal{P}$ as a subset of the linear order $6 \prec 1\prec 2 \prec 3\prec 4 \prec 5$; we could just as well consider $\mathcal{P}$ as a subset of the linear order $1 \prec 4\prec 6 \prec 3\prec 2 \prec 5$ and represent the arc diagram as
\[
        \nu = \begin{tikzpicture}[baseline={([yshift=-.5ex]current bounding box.center)}]
	\fill (0,0) circle (.1) node[below]{1};
    \fill (.5,0) circle (.1) node[below]{4};
    \fill (1,0) circle (.1) node[below]{6};
    \fill (1.5,0) circle (.1) node[below]{3};
    \fill (2,0) circle (.1) node[below]{2};
    \fill (2.5,0) circle (.1) node[below]{5};
    \draw (0,0) to [out=45, in=135] (2.5,0);
    \draw (.5,0) to [out=45, in=135] (2.5,0);
    \draw (1,0) to [out=45, in=135] (1.5,0);
    \draw (1,0) to [out=45, in=135] (2,0);
    \end{tikzpicture}.
\]
Although the representations look different, these arc diagrams have the same edge set.
\end{example}

Let $\nu$ be a $\mathcal{P}$-set partition; we say that $\nu$ is \emph{nonnesting} if there are no $i \frown j,k \frown l \in \nu$ with $i \prec_\mathcal{P} k\prec_\mathcal{P} l \prec_\mathcal{P} j$. Note that if $\mathcal{P}$ is the usual linear order on $[n]$, the nonnesting $\mathcal{P}$-set partitions are the same as the nonnesting set partitions.

\bigbreak

Once again we want to label the arcs of our arc diagrams. An $(\mathbb{F}_q,\mathcal{P})$\emph{-set partition} is a $\mathcal{P}$-set partition with arcs labeled by elements of $\mathbb{F}_q^\times$. If the underlying $\mathcal{P}$-set partition is nonnesting we will also refer to the $(\mathbb{F}_q,\mathcal{P})$-set partition as nonnesting.

\bigbreak

From now on, let
\begin{align*}
        \Pi(\mathcal{P},q) & = \{(\mathbb{F}_q,\mathcal{P})\text{-set partitions}\} \quad \text{and} \\
         \text{NN}(\mathcal{P},q) & = \{\text{nonnesting }(\mathbb{F}_q,\mathcal{P})\text{-set partitions}\}.
\end{align*}

\begin{remark} The notion of a $\mathcal{P}$-set partition of $I$ is not the same as that of a `poset partition' (see, for instance, \cite[Section 13.1.4]{MR2724388}).
\end{remark}

\section{Supercharacter theories of pattern groups}\label{secsctpg}

In \cite{MR2373317}, Diaconis--Isaacs define a supercharacter theory of an arbitrary finite group $G$. In this section we review this definition and present supercharacter theories of a large collection of unipotent groups.

\begin{definition} Let $G$ be a finite group, $\mathcal{K}$ be a partition of $G$ into unions of conjugacy classes, and $\mathcal{X}$ be a set of characters of $G$. We say that the pair $(\mathcal{K},\mathcal{X})$ is a \emph{supercharacter theory} of $G$ if
\begin{enumerate}[leftmargin=.52in]
\item[(SCT1)] $|\mathcal{X}|=|\mathcal{K}|$,
\item[(SCT2)] the characters $\chi\in \mathcal{X}$ are constant on the members of $\mathcal{K}$, and
\item[(SCT3)] each irreducible character of $G$ is a constituent of exactly one character in $\mathcal{X}$.
\end{enumerate}
The characters $\chi \in \mathcal{X}$ are called \emph{supercharacters} and the blocks $K \in \mathcal{K}$ are called \emph{superclasses}.
\end{definition}

Let $UT_n(\mathbb{F}_q)$ denote the group of upper-triangular matrices with entries in the finite field with $q$ elements and ones on the diagonal. The initial motivation behind supercharacter theories was the study of `basic characters' of $UT_n(\mathbb{F}_q)$ by Andr\'e in \cite{MR1839342,MR1338979,MR1896026}. Andr\'e's construction was reframed in terms of group actions by Yan in \cite{yan} before being generalized to a large collection of subgroups of $UT_n(\mathbb{F}_q)$ by Diaconis--Isaacs in \cite{MR2373317}.

\bigbreak

Let $\frak{g}$ be a nilpotent $\mathbb{F}_q$-algebra. The \emph{algebra group} associated to $\frak{g}$ is the set
\[
        G = \{1+x \mid x \in \frak{g}\}
\]
with multiplication $(1+x)(1+y) = 1+x+y+xy$. We will write $G = 1+\frak{g}$ to indicate that $G$ is the algebra group associated to $\frak{g}$. For example, let $\frak{ut}_n(\mathbb{F}_q)$ be the algebra of strictly upper-triangular matrices with entries in $\mathbb{F}_q$; then $UT_n(\mathbb{F}_q)$ is the algebra group associated to $\frak{ut}_n(\mathbb{F}_q)$.

\bigbreak

For a finite set $I$, define an $\mathbb{F}_q$-algebra
\[
        M(I) = \{a = (a_{ij}) \mid i,j \in I \text{ and } a_{ij} \in \mathbb{F}_q\}
\]
with addition and scalar multiplication defined pointwise and multiplication
\[
        (ab)_{ij} = \sum_{k \in I}a_{ik}b_{kj}.
\]
If a total order is chosen on $I$, then $M(I)$ is canonically isomorphic to the algebra of matrices over $\mathbb{F}_q$ with entries indexed by the elements of $I$. If $\mathcal{P}$ is a poset on $I$, we define the \emph{pattern algebra} associated to $\mathcal{P}$ by
\[
        \frak{u}_\mathcal{P} = \{x \in M(I) \mid x_{ij}=0 \text{ unless } i\prec_\mathcal{P} j \}.
\]
The \emph{pattern group} associated to $\mathcal{P}$ is defined to be $U_\mathcal{P} = 1+\frak{u}_\mathcal{P}$; as $\frak{u}_\mathcal{P}$ is nilpotent, $U_\mathcal{P}$ is an algebra group. For more on algebra groups and pattern groups, see \cite{MR2373317,MR2491890,MR1358482}.

\bigbreak

In \cite{MR2373317}, Diaconis--Isaacs construct a supercharacter theory of an arbitrary algebra group $G = 1+\frak{g}$. Define a bijection
\begin{align*}
        f:G &\to \frak{g} \\
        1+x & \mapsto x.
\end{align*}
The group $G$ acts by left and right multiplication on $\frak{g}$; for $g\in G$, define
\[
        K_g = \{h \in G \mid f(h) \in Gf(g)G\}.
\]
The partition $\{K_g \mid g \in G\}$ is a partition of $G$ into unions of conjugacy classes.

\bigbreak

There are contragedient actions of $G$ on the dual $\frak{g}^*$ defined by
\[
        (g\lambda)(x) = \lambda(g^{-1}x) \quad \text{and} \quad (\lambda g)(x) = \lambda(xg^{-1}),
\]
where $g \in G$, $x \in \frak{g}$, and $\lambda \in \frak{g}^*$. Let $\theta : \mathbb{F}_q^+ \to \mathbb{C}^\times$ be a nontrivial homomorphism, and for $\lambda \in \frak{g}^*$ define
\[
        \chi_\lambda = \sum_{\mu \in G \lambda G} \theta \circ \mu \circ f.
\]
\begin{remark} Our definition of $\chi_\lambda$ differs from that in \cite{MR2373317} by a constant multiple.
\end{remark}

\begin{theorem}[Diaconis--Isaacs, \cite{MR2373317}]\label{alggpsct} The functions $\chi_\lambda$ are characters of $G$, and the pair
\[
        (\{K_g \mid g \in G\},\{\chi_\lambda \mid \lambda \in \frak{g}^*\})
\]
defines a supercharacter theory of $G$.
\end{theorem}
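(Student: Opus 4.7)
My plan is to verify the three axioms together with the claim that each $\chi_\lambda$ is an honest character, proceeding from the easiest pieces to the hardest. The key structural fact I use throughout is that the bijection $f$ intertwines the two-sided multiplication action of $G$ on itself with the two-sided action of $G$ on $\mathfrak{g}$.

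Axiom (SCT2) is immediate. If $h \in K_g$, then $f(h) = g_1 f(g) g_2$ for some $g_1,g_2 \in G$. Substituting this into the defining sum and using the contragredient identities $\mu(g_1 x g_2) = (g_1^{-1}\mu g_2^{-1})(x)$, I reindex via $\mu \mapsto g_1^{-1}\mu g_2^{-1}$, a bijection on the orbit $G\lambda G$, to see $\chi_\lambda(h) = \chi_\lambda(g)$. For (SCT1), I invoke Brauer's permutation lemma: for any action of a finite group $H$ on a finite abelian group $A$, the number of $H$-orbits on $A$ equals the number on the Pontryagin dual $\hat A$. Applied to $H = G\times G$ acting on the additive group $\mathfrak{g}$, and identifying $\hat{\mathfrak{g}}\cong\mathfrak{g}^*$ via $\mu\mapsto\theta\circ\mu$, this forces $|\mathcal{K}|=|\mathcal{X}|$.

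The main obstacle is showing that $\chi_\lambda$ is a character, since as defined it is only a sum of roots of unity. I would realize $\chi_\lambda$, up to a positive integer scalar, as an induced character. The cocycle identity $f(uv) = f(u) + f(v) + f(u)f(v)$ shows that for any subalgebra $\mathfrak{p} \subseteq \mathfrak{g}$ with corresponding subgroup $P=1+\mathfrak{p}$, the formula $\phi(1+x)=\theta(\lambda(x))$ defines a linear character of $P$ precisely when $\lambda$ vanishes on $\mathfrak{p}\cdot\mathfrak{p}$, that is, when $\mathfrak{p}$ is a $\lambda$-polarization. Exhibiting such a polarization in the associative setting over $\mathbb{F}_q$ is the subtle step; I would build $\mathfrak{p}$ inductively, starting from a composition series of two-sided ideals in $\mathfrak{g}$ (which exists by nilpotence) and extending $\mathfrak{p}$ one dimension at a time by adjoining elements that preserve $\lambda(\mathfrak{p}\cdot\mathfrak{p})=0$. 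Once $P$ is produced, a direct orbit-counting computation of $\mathrm{Ind}_P^G\phi$ on each superclass $K_g$ identifies it with $\chi_\lambda$ up to a positive integer scalar, proving in particular that $\chi_\lambda$ is a character.

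Given the character property, (SCT3) drops out of two formal identities. The Fourier-type identity $\sum_\lambda \chi_\lambda = \chi_{\mathrm{reg}}$ (with $\lambda$ ranging over $(G\times G)$-orbit representatives in $\mathfrak{g}^*$) comes from grouping the additive orthogonality $\sum_{\mu\in\mathfrak{g}^*}\theta(\mu(x))=|\mathfrak{g}|\,\delta_{x,0}$ into orbit blocks and specializing to $x=f(g)$. The orthogonality $\langle\chi_\lambda,\chi_{\lambda'}\rangle_G = |G\lambda G|\,\delta_{G\lambda G,\,G\lambda'G}$ follows by rewriting the inner product over $G$ as a sum over $\mathfrak{g}$ via $f$ and again applying additive character orthogonality. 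Since each $\chi_\lambda$ decomposes into $\mathrm{Irr}(G)$ with non-negative integer multiplicities, orthogonality forces distinct $\chi_\lambda$ to have disjoint sets of irreducible constituents, and the regular-character identity then forces every irreducible of $G$ to appear in exactly one $\chi_\lambda$, yielding (SCT3).
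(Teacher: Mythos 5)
The paper does not actually prove this theorem; it is imported verbatim from Diaconis--Isaacs \cite{MR2373317}, so your argument has to be measured against their proof. Three of your four steps are fine and are essentially the standard arguments: (SCT2) by reindexing the orbit sum via $\mu\mapsto g_1^{-1}\mu g_2^{-1}$; (SCT1) by Brauer's permutation lemma for $G\times G$ acting on $(\mathfrak{g},+)$, with $\hat{\mathfrak{g}}\cong\mathfrak{g}^*$ via $\mu\mapsto\theta\circ\mu$; and (SCT3), granted characterhood, from the two identities $\sum_{\lambda}\chi_\lambda=\chi_{\mathrm{reg}}$ and $\langle\chi_\lambda,\chi_{\lambda'}\rangle_G=|G\lambda G|\,\delta_{G\lambda G,G\lambda' G}$, both of which follow correctly from additive character orthogonality.

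The genuine gap is the characterhood step, which is the heart of the theorem. You take an arbitrary subalgebra $\mathfrak{p}$ with $\lambda(\mathfrak{p}\mathfrak{p})=0$, produced by an unspecified inductive extension along a chain of ideals, and then simply assert that ``a direct orbit-counting computation'' identifies $\mathrm{Ind}_P^G\phi$ with $\chi_\lambda$ up to a positive integer scalar. For a general such $\mathfrak{p}$ this is false (already $\mathfrak{p}=0$ gives the regular character), and your construction gives no control on $[G:P]$, which would have to equal $|G\lambda|$ for the degrees to be compatible. The assertion is true for one specific choice, $\mathfrak{h}_\lambda=\{x\in\mathfrak{g}\mid\lambda(x\mathfrak{g})=0\}$, which is a right ideal, so that $\theta\circ\lambda\circ f$ is linear on $1+\mathfrak{h}_\lambda$ and $[G:1+\mathfrak{h}_\lambda]=|G\lambda|$; but proving $\mathrm{Ind}_{1+\mathfrak{h}_\lambda}^{G}(\theta\circ\lambda\circ f)=\tfrac{|G\lambda|}{|G\lambda G|}\chi_\lambda$ is exactly the nontrivial computation of Yan and Diaconis--Isaacs, who in their proof of this theorem avoid induction altogether and instead realize the normalized supercharacter as the character of the $G$-module spanned by $\{\theta\circ\mu\circ f\mid\mu\in G\lambda\}$ under translation. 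There is also a logical slip in the direction of your scalar: knowing $\mathrm{Ind}_P^G\phi=m\,\chi_\lambda$ with $m\in\mathbb{Z}_{>0}$ would only give $\chi_\lambda$ nonnegative \emph{rational} multiplicities; what you need is the reverse relation $\chi_\lambda=\tfrac{|G\lambda G|}{|G\lambda|}\,\mathrm{Ind}_P^G\phi$, where $\tfrac{|G\lambda G|}{|G\lambda|}$ is a positive integer because the two-sided orbit is a disjoint union of left orbits of equal size $|G\lambda|$. Until this step is carried out for a correctly chosen $P$, the proposal does not establish that the $\chi_\lambda$ are characters, and hence neither (SCT3) nor the theorem.
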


In the case that $G = UT_n(\mathbb{F}_q)$, there is a nice description of the supercharacters and superclasses in terms of $\mathbb{F}_q$-set partitions. Propositions~\ref{lemtypeascl} and \ref{lemtypeaschdim}, as well as Corollary~\ref{cortypeaschscl} and Theorem~\ref{theoremtypeaschtable}, were all initially developed by Andr\'e (for instance, in \cite{MR1839342}), but not phrased in terms of $\mathbb{F}_q$-set partitions.

\bigbreak

Given an $\mathbb{F}_q$-set partition $\eta$, define $x_\eta \in \frak{ut}_n(\mathbb{F}_q)$ and $\lambda_\eta \in \frak{ut}_n(\mathbb{F}_q)^*$ by
\[
        x_\eta = \sum_{i \overset{a}{\frown}j \in \eta} ae_{ij} \quad \text{and}\quad\lambda_\eta(x) = \sum_{i \overset{a}{\frown}j \in \eta} ax_{ij},
\]
where $x \in \frak{g}$.

\begin{proposition}[{\cite[Section 2.3]{MR2592079}}]\label{lemtypeascl} We have the following.
\begin{enumerate}
\item The set
\[
        \{x_\eta \mid \eta \text{ is an }\mathbb{F}_q\text{-set partition}\}
\]
is a set of orbit representatives for the action of $UT_n(\mathbb{F}_q) \times UT_n(\mathbb{F}_q)$ on $\frak{ut}_n(\mathbb{F}_q)$.
\item The set
\[
    \{\lambda_\eta \mid \eta \text{ is an }\mathbb{F}_q\text{-set partition}\}
\]
is a set of orbit representatives for the action of $UT_n(\mathbb{F}_q) \times UT_n(\mathbb{F}_q)$ on $\frak{ut}_n(\mathbb{F}_q)^*$.
\end{enumerate}
\end{proposition}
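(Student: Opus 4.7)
The plan is to prove part (1) via a reduction algorithm (existence of a representative $x_\eta$ in each orbit) combined with a rank invariant argument (uniqueness of $\eta$), and then to deduce (2) by the analogous analysis of the contragredient action. For existence, I would use Gaussian elimination tailored to the available operations: left multiplication by $1+\alpha e_{rs}$ with $r<s$ adds $\alpha$ times row $s$ to row $r$, and right multiplication by $(1+\beta e_{st})^{-1}$ with $s<t$ adds a multiple of column $s$ to column $t$. Given $x \in \frak{ut}_n(\mathbb{F}_q)$, let $j_1$ be the leftmost nonzero column and $i_1$ the bottommost nonzero row in that column. Using $x_{i_1 j_1}$ as a pivot, first clear the remaining entries in row $i_1$ by adding multiples of column $j_1$ into later columns; since column $j_1$ vanishes below row $i_1$, these operations do not disturb rows $>i_1$ and leave column $j_1$ intact. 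Then clear the entries in column $j_1$ above row $i_1$ by adding multiples of row $i_1$ (now supported only at $(i_1,j_1)$) into earlier rows. The entry $(i_1,j_1)$ is then isolated; iterate on the remainder. The resulting sequence of pivots $(i_k, j_k)$ occupies distinct rows and distinct columns, so it assembles into a valid $\mathbb{F}_q$-set partition $\eta$ with $x$ in the same orbit as $x_\eta$.

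For uniqueness, introduce the rank invariants
\[
r_{ij}(x) = \mathrm{rank}\bigl(x[\{i,\ldots,n\},\{1,\ldots,j\}]\bigr).
\]
Under left action by $u \in UT_n(\mathbb{F}_q)$, the submatrix $x[\{i,\ldots,n\},\{1,\ldots,j\}]$ is multiplied on the left by $u[\{i,\ldots,n\},\{i,\ldots,n\}]$, which is invertible, and the right action induces an invertible column operation on columns $\leq j$. Hence $r_{ij}$ is $UT_n(\mathbb{F}_q) \times UT_n(\mathbb{F}_q)$-invariant. On the representative $x_\eta$ one computes $r_{ij}(x_\eta) = |\{k \frown l \in \eta : k \geq i,\ l \leq j\}|$, so the corner sum $r_{ij} - r_{i+1,j} - r_{i,j-1} + r_{i+1,j-1}$ detects whether $(i,j)$ is an arc of $\eta$, and the underlying unlabeled set partition is recovered. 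To recover the $\mathbb{F}_q^\times$-labels, supplement with minor invariants: for each arc $(i,j) \in \eta$, exhibit a maximal minor of the invariant submatrix $x[\{i,\ldots,n\},\{1,\ldots,j\}]$ whose determinant on $x_\eta$ equals a product of labels of arcs weakly northeast of $(i,j)$; taking ratios of nested such invariants isolates each individual label $a \in \mathbb{F}_q^\times$.

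Part (2) then follows by a parallel argument. Under the nondegenerate pairing $\langle \lambda, x \rangle = \lambda(x)$ between $\frak{ut}_n(\mathbb{F}_q)^*$ and $\frak{ut}_n(\mathbb{F}_q)$, the contragredient two-sided action on the dual corresponds to an opposite-sided two-sided action on $\frak{ut}_n(\mathbb{F}_q)$, and $\lambda_\eta$ is identified with $x_\eta$ up to the resulting symmetry. The existence algorithm and the rank/minor invariants transport to produce analogous orbit data, establishing that the $\lambda_\eta$ form orbit representatives for the dual action.

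The main obstacle is the label-recovery step in the uniqueness argument: while rank invariants cleanly encode the shape of $\eta$, extracting the individual $\mathbb{F}_q^\times$-labels from orbit invariants requires picking specific minors whose determinants on $x_\eta$ decompose as single products of labels and simultaneously remain invariant under the unipotent actions. The care lies in choosing row and column sets compatible with the combinatorics of the pivots so that the minor on $x_\eta$ evaluates to exactly such a product.
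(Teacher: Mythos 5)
The paper itself offers no argument for this proposition; it is quoted from the literature (Andr\'e/Yan, via \cite[Section 2.3]{MR2592079}), so your proposal is being judged as a self-contained proof. Two of its three ingredients are sound: the reduction algorithm (left multiplication by $1+\alpha e_{rs}$, $r<s$, adds row $s$ to row $r$; right multiplication adds column $s$ to column $t$, $s<t$) does isolate pivots in distinct rows and columns and shows every orbit meets $\{x_\eta\}$, and the quantities $r_{ij}(x)=\mathrm{rank}\,x[\{i,\ldots,n\},\{1,\ldots,j\}]$ are genuinely invariant (the submatrix transforms by $u[\{i,\ldots,n\},\{i,\ldots,n\}]$ on one side and $v[\{1,\ldots,j\},\{1,\ldots,j\}]$ on the other), so the corner alternating sum recovers the unlabeled arc diagram.

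The gap is the label-recovery step, which you yourself flag as the main obstacle: as written it is not a proof. A determinant of a submatrix with row set $R$ and column set $C$ is not an orbit invariant; adding $\alpha$ times row $s\notin R$ to row $r\in R$ changes it by $\pm\alpha$ times the minor with $r$ replaced by $s$, so the only minors invariant as polynomial functions on $\frak{ut}_n(\mathbb{F}_q)$ are those with bottom-justified rows $\{i,\ldots,n\}$ and left-justified columns $\{1,\ldots,j\}$, and those typically vanish on $x_\eta$, so they do not see the labels. Concretely, for $n=4$ and $\eta=\{1\overset{a}{\frown}4,\,2\overset{b}{\frown}3\}$, the natural choice (rows $\{1,2\}$, columns $\{3,4\}$) evaluates to $-ab$ on $x_\eta$, but under left multiplication by $1+\alpha e_{23}$ it changes by $\alpha x_{13}x_{34}$; it is constant on this particular orbit only because $x_{34}$ vanishes identically there, and facts of that kind (that the correcting minors vanish on the orbit in question) are precisely what remains to be proved, arc configuration by arc configuration. (Incidentally, the arcs appearing in $x[\{i,\ldots,n\},\{1,\ldots,j\}]$ are those weakly nested inside $i\frown j$, i.e.\ weakly southwest of $(i,j)$ in matrix coordinates, not northeast.) To close the gap you could either show that the vanishing pattern already forced by the rank invariants makes your chosen minors constant on each orbit, or argue directly: if $ux_\eta v=x_{\eta'}$ with the same underlying arc set, compare entries at arc positions, $(ux_\eta v)_{ij}=a_{ij}+\sum u_{ik}a_{kl}v_{lj}$ with the sum over arcs strictly nested in $i\frown j$, and use the vanishing of $x_{\eta'}$ at non-arc positions, inducting on nesting depth, to kill the extra terms. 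The same incompleteness propagates to part (2), which you dispatch with "a parallel argument" under the duality; the identification of the contragredient action with an opposite-sided reduction is correct in spirit, but the label step must be supplied there as well.
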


For an $\mathbb{F}_q$-set partition $\eta$, define $g_\eta \in UT_n(\mathbb{F}_q)$ by $g_\eta = 1+x_\eta$ and let $\chi_\eta = \chi_{\lambda_\eta}$.

\begin{corollary}[{\cite[Section 2.3]{MR2592079}}]\label{cortypeaschscl} The sets
\[
        \mathcal{X} = \{\chi_\eta \mid \eta \text{ is an }\mathbb{F}_q\text{-set partition}\} \quad \text{and} \quad \mathcal{K} = \{K_{g_\nu}\mid\nu\text{ is an }\mathbb{F}_q\text{-set partition}\}
\]
are the supercharacters and superclasses of the supercharacter theory of $UT_n(\mathbb{F}_q)$ defined in Theorem~\ref{alggpsct}.
\end{corollary}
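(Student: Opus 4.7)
The plan is to deduce this immediately from Theorem~\ref{alggpsct} (applied to the algebra group $G = UT_n(\mathbb{F}_q) = 1 + \mathfrak{ut}_n(\mathbb{F}_q)$) together with the orbit-representative statements of Proposition~\ref{lemtypeascl}. The entire content of the corollary is the claim that the two-sided $G$-orbits appearing in Theorem~\ref{alggpsct} are indexed by $\mathbb{F}_q$-set partitions in the explicit way prescribed by $\eta \mapsto g_\eta$ and $\eta \mapsto \lambda_\eta$; Proposition~\ref{lemtypeascl} is precisely the statement that these assignments pick out one representative from each orbit.

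For the superclasses, I would first recall from Theorem~\ref{alggpsct} that $\{K_g \mid g \in G\}$ is the partition of $G$ into superclasses, and that the map $f : G \to \mathfrak{g}$ carries $K_g$ bijectively onto $Gf(g)G$. Hence distinct superclasses correspond to distinct $G \times G$-orbits on $\mathfrak{g}$. By Proposition~\ref{lemtypeascl}(1), the elements $x_\eta$ form a complete set of orbit representatives, and since $f(g_\eta) = f(1 + x_\eta) = x_\eta$, the family $\{K_{g_\eta} \mid \eta \text{ an } \mathbb{F}_q\text{-set partition}\}$ is exactly the set of superclasses, with no repetition.

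For the supercharacters, I would argue symmetrically. Theorem~\ref{alggpsct} tells us that the supercharacters are $\{\chi_\lambda \mid \lambda \in \mathfrak{g}^*\}$, and from the definition $\chi_\lambda = \sum_{\mu \in G\lambda G} \theta \circ \mu \circ f$ it is immediate that $\chi_\lambda = \chi_{\lambda'}$ whenever $\lambda$ and $\lambda'$ lie in the same $G \times G$-orbit. The count $|\mathcal{X}| = |\mathcal{K}|$ from (SCT1) then forces distinct orbits to yield distinct supercharacters. Proposition~\ref{lemtypeascl}(2) provides $\{\lambda_\eta\}$ as orbit representatives, so $\{\chi_\eta = \chi_{\lambda_\eta}\}$ is precisely the set of supercharacters.

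There is really no obstacle here beyond assembling the two cited results; the genuine content lies in Proposition~\ref{lemtypeascl}, whose proof identifies the two-sided $UT_n(\mathbb{F}_q)$-orbits on $\mathfrak{ut}_n(\mathbb{F}_q)$ and its dual with $\mathbb{F}_q$-set partitions via a row- and column-reduction argument. Given that proposition, the corollary is a direct specialization of Theorem~\ref{alggpsct}.
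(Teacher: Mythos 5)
Your derivation is correct and is exactly the intended one: the paper states this corollary without proof (citing \cite{MR2592079}), and it is meant to follow just as you argue, by combining Theorem~\ref{alggpsct} with the orbit-representative statements of Proposition~\ref{lemtypeascl} via $f(g_\eta)=x_\eta$ and $\chi_\eta=\chi_{\lambda_\eta}$. Your extra counting remark using (SCT1) even gives injectivity of $\eta\mapsto\chi_\eta$, which is more than the set equality being asserted.
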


We can calculate the dimensions of the supercharacters in terms of the corresponding $\mathbb{F}_q$-set partitions.

\begin{proposition}\label{lemtypeaschdim} Let $\eta$ be an $\mathbb{F}_q$-set partition; then
\begin{align*}
        \chi_\eta(1) &= q^{2(\sum_{i \overset{a}{\frown}j \in \eta} j-i-1)-|C(\eta)|},
\end{align*}
where $C(\eta) = \{(i \overset{a}{\frown} k, j \overset{b}{\frown} l) \mid i \overset{a}{\frown} k, j \overset{b}{\frown} l \in \eta \text{ and } i<j<k<l\}$ is the \emph{crossing set} of $\eta$ (as in \cite[Section 2.1]{MR2592079}).
\end{proposition}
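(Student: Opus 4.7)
The plan is to evaluate $\chi_\eta(1)$ directly from the definition. Since $f(1) = 0$ and $\theta(0) = 1$,
\[
    \chi_\eta(1) = \sum_{\mu \in UT_n(\mathbb{F}_q) \lambda_\eta UT_n(\mathbb{F}_q)} \theta(\mu(0)) = |UT_n(\mathbb{F}_q) \lambda_\eta UT_n(\mathbb{F}_q)|,
\]
so the task reduces to counting the two-sided $UT_n(\mathbb{F}_q)$-orbit of $\lambda_\eta$ in $\mathfrak{ut}_n(\mathbb{F}_q)^*$. Setting $r = g^{-1}$ and $s = h^{-1}$ (which range freely over $UT_n(\mathbb{F}_q)$ as $g,h$ do), the coefficient of $x_{kl}$ in $g \lambda_\eta h$ expands as
\[
    \sum_{\substack{i \overset{a}{\frown} j \in \eta \\ i \leq k < l \leq j}} a \, r_{ik} s_{lj},
\]
with the conventions $r_{ii} = s_{jj} = 1$, and where $r_{ik}, s_{lj}$ (for $i<k$, $l<j$) range independently over $\mathbb{F}_q$.

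For each arc $i \overset{a}{\frown} j \in \eta$, I would call the $j-i-1$ positions $(i,l)$ with $i<l<j$ its \emph{left boundary} (contributing $a s_{lj}$) and the $j-i-1$ positions $(k,j)$ with $i<k<j$ its \emph{right boundary} (contributing $a r_{ik}$). I would then establish three structural claims. First, at a boundary position receiving a contribution from only one arc, the coefficient is a single free $\mathbb{F}_q$-parameter and hence takes $q$ distinct values. Second, two distinct arcs share a boundary position if and only if they cross; for a crossing $i \frown j, \, i' \frown j'$ with $i < i' < j < j'$ the shared position is $(i', j)$, at which the coefficient equals $a r_{ii'} + a' s_{jj'}$, a free sum of two independent parameters, giving only $q$ values (one fewer degree of freedom than if the two arc-contributions were independent). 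Third, at any \emph{interior} position $(k, l)$ with $i<k<l<j$, the arc's contribution $a r_{ik} s_{lj}$ is the product of the boundary coefficients $a r_{ik}$ at $(k, j)$ and $a s_{lj}$ at $(i, l)$ (scaled by $a^{-1}$), so it is already determined by the boundary values.

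Assembling these facts, each arc contributes $2(j-i-1)$ boundary slots, each crossing identifies exactly one pair of these slots into a single parameter, and interior positions contribute no new freedom. Hence the orbit is parametrized by $2\sum_{i\frown j \in \eta}(j-i-1) - |C(\eta)|$ independent $\mathbb{F}_q$-valued coordinates, yielding $\chi_\eta(1) = q^{2\sum(j-i-1) - |C(\eta)|}$. The most delicate step is the third claim, especially when an interior position of one arc coincides with a boundary position of another; to verify the overall independence cleanly I would induct on $|\eta|$, adding arcs one at a time ordered by right endpoint and checking that each new arc contributes exactly $2(j-i-1)$ new boundary parameters together with exactly one new linear relation per crossing it makes with previously added arcs.
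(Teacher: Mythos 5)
Your opening reduction is fine: with this paper's normalization $\chi_\eta(1)=|UT_n(\mathbb{F}_q)\lambda_\eta UT_n(\mathbb{F}_q)|$, and your expansion of the coefficient of $x_{kl}$ in $g\lambda_\eta h$ as $\sum a\,r_{ik}s_{lj}$ over covering arcs is correct. (For context, the paper gives no proof of this proposition at all; it is quoted from Andr\'e/Thiem, adjusted for the constant-multiple normalization, so any self-contained argument is extra.) The genuine gap is exactly at the step you flag and then defer. Your three structural claims do not cover positions that are a boundary position of one arc and an interior position of another, and such positions occur for every crossing that is not ``tight'': for $\eta=\{1\overset{a}{\frown}4,\,2\overset{b}{\frown}6\}$ the coefficient at $(2,3)$ is $a\,r_{12}s_{34}+b\,s_{36}$ and at $(3,4)$ is $a\,r_{13}+b\,r_{23}s_{46}$; these are neither single free parameters (claim 1), nor shared-boundary sums (claim 2), nor determined by boundary values (claim 3) --- indeed $(2,3)$ is a boundary slot of $2\frown 6$ whose value is free only because $s_{36}$ occurs in no other coefficient, an argument you never make. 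Likewise, the parameters $r_{ii'}$ and $s_{jj'}$ entering your shared-boundary coefficient reappear in interior coefficients, so ``each crossing costs exactly one degree of freedom'' is not a consequence of the claims as stated. Finally, since you are counting the image of a map that is quadratic in $(r,s)$, a naive parameter count does not give the image size; you must exhibit constant fiber sizes (or exploit the orbit structure). All of this is what your announced induction would have to do, and it is not carried out, so as written the proof is incomplete at its decisive point.

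A cleaner way to finish in the same spirit: because the left and right actions commute, every right orbit inside $U\lambda_\eta U$ has size $|\lambda_\eta U|$ and the number of such orbits is $|U\lambda_\eta|/|U\lambda_\eta\cap\lambda_\eta U|$, so $|U\lambda_\eta U|=|U\lambda_\eta|\,|\lambda_\eta U|/|U\lambda_\eta\cap\lambda_\eta U|$. Your boundary analysis with $s=1$ (respectively $r=1$) is then genuinely easy, since at most one arc ends (starts) at each vertex: it gives $U\lambda_\eta=\{\mu\colon \mu_{ij}=a \text{ for } i\overset{a}{\frown}j\in\eta,\ \mu_{kj}\text{ arbitrary for } i<k<j,\ \mu=0\text{ elsewhere}\}$ and the mirror statement for $\lambda_\eta U$, so $|U\lambda_\eta|=|\lambda_\eta U|=q^{\sum(j-i-1)}$, while the intersection consists of $\lambda_\eta$ altered arbitrarily at the shared boundary positions $(i',j)$, one for each crossing, so $|U\lambda_\eta\cap\lambda_\eta U|=q^{|C(\eta)|}$. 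This yields the stated exponent $2\sum_{i\frown j\in\eta}(j-i-1)-|C(\eta)|$ without any entry-by-entry analysis of the two-sided orbit.
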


We mention that the supercharacters that we are considering differ from those in \cite{MR2592079} by a constant multiple, and as such the dimension $\chi_\eta(1)$ in Proposition~\ref{lemtypeaschdim} differs from that in \cite[Equation~2.2]{MR2592079}. Finally we calculate the values of the supercharacters on the superclasses.

\begin{theorem}[{\cite[Equation 2.1]{MR2592079}}]\label{theoremtypeaschtable} Let $\eta$ and $\nu$ be $\mathbb{F}_q$-set partitions; then we have
\[
        \chi_{\eta}(g_\nu) = \left\{\begin{array}{ll}
        \frac{\chi_{\eta}(1)}{q^{\textup{nst}_\nu^\eta}}
        \displaystyle\prod_{\substack{i\overset{a}{\frown}j \in \eta \\ i\overset{b}{\frown}j \in \nu}} \theta(ab) & \quad  \begin{array}{l}\text{if for }i\overset{a}{\frown}j \in \eta \text{ and } i<k<j, \\ i\overset{b}{\frown}k, k \overset{b}{\frown} j \notin \nu, \end{array} \\
        0 & \quad \text{otherwise,}\end{array}\right.
\]
where $\textup{nst}_\nu^\eta = |\{i<j<k<l \mid j\overset{a}{\frown} k \in \nu,i\overset{b}{\frown} l \in \eta\}|$.
\end{theorem}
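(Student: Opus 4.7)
The plan is to evaluate the orbit sum $\chi_\eta(g_\nu)=\sum_{\mu\in G\lambda_\eta G}\theta(\mu(x_\nu))$ directly. Setting $\nu=\emptyset$ gives $\chi_\eta(1)=|G\lambda_\eta G|$, so the double-counting identity
\[
\chi_\eta(g_\nu) = \frac{\chi_\eta(1)}{|G|^2}\sum_{h_1,h_2\in G}\theta\bigl(\lambda_\eta(h_1 x_\nu h_2)\bigr)
\]
follows from the substitution $h_i = g_i^{-1}$ together with $(g_1\lambda_\eta g_2)(x)=\lambda_\eta(g_1^{-1}x g_2^{-1})$. Writing $h_i = 1+y_i$ with $y_i\in\frak{ut}_n(\mathbb{F}_q)$ and expanding $h_1 x_\nu h_2 = x_\nu + y_1 x_\nu + x_\nu y_2 + y_1 x_\nu y_2$, I decompose
\[
\lambda_\eta(h_1 x_\nu h_2) = c_0 + L_1(y_1) + L_2(y_2) + B(y_1,y_2),
\]
where $c_0 = \sum_{i\overset{a}{\frown}j\in\eta,\,i\overset{b}{\frown}j\in\nu} ab$ and $L_1,L_2,B$ are forms whose coefficients I compute from the arc structure.

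A direct matrix calculation shows that the coefficient of $(y_1)_{ik}$ in $L_1$ equals $ab$ exactly when $\eta$ contains $i\overset{a}{\frown}j$ and $\nu$ contains $k\overset{b}{\frown}j$ with $i<k<j$, and vanishes otherwise; the coefficient of $(y_2)_{kj}$ in $L_2$ is analogous for arcs $i\overset{b}{\frown}k\in\nu$. The bilinear form is $B(y_1,y_2)=\sum ab\,(y_1)_{ik}(y_2)_{lj}$ summed over quadruples $i<k<l<j$ with $i\overset{a}{\frown}j\in\eta$ and $k\overset{b}{\frown}l\in\nu$, i.e.\ exactly the nestings counted by $\textup{nst}_\nu^\eta$. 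Because $\nu$ is a set partition, each vertex $k$ has at most one rightward $\nu$-arc, and this forces the ``$L_1$-contribution'' and the ``$B$-contribution'' at a fixed variable $(y_1)_{ik}$ to be mutually exclusive; the same dichotomy holds for each $(y_2)_{lj}$. Consequently, if the theorem's hypothesis fails, then some variable has a nonzero constant coefficient in $\lambda_\eta(h_1 x_\nu h_2)$, and the Fourier identity $\sum_{t\in\mathbb{F}_q}\theta(ct)=0$ for $c\neq 0$ collapses the entire double sum to zero.

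When the hypothesis holds, $L_1\equiv L_2 \equiv 0$ and $\lambda_\eta(h_1 x_\nu h_2) = c_0 + B(y_1,y_2)$. Summing over $y_2$ first: the coefficient of each $(y_2)_{lj}$ is a single term $ab\,(y_1)_{ik}$ or zero, so the inner sum is nonzero precisely when $(y_1)_{ik}=0$ at each of the $\textup{nst}_\nu^\eta$ distinct nesting positions. This cuts the allowable volume of $y_1$ by a factor of $q^{\textup{nst}_\nu^\eta}$ and produces an outer sum equal to $\theta(c_0)|G|^2/q^{\textup{nst}_\nu^\eta}$; combining with the prefactor and using $\theta(c_0)=\prod\theta(ab)$ yields the stated formula. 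The main obstacle is the combinatorial claim in the previous paragraph: verifying that the set-partition structure forces each $(y_1)_{ik}$ to participate in at most one $L_1$-term and at most one nesting term of $B$, and that these two situations are disjoint. This is what reduces $B$, after reindexing, to a direct sum of independent one-variable bilinear forms, which is what isolates the clean factor $q^{-\textup{nst}_\nu^\eta}$ and simultaneously produces the sharp vanishing condition.
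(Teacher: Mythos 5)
Your argument is correct, but it is worth noting that the paper does not prove this statement at all: Theorem~\ref{theoremtypeaschtable} is quoted from \cite[Equation 2.1]{MR2592079} (with the caveat, stated in the surrounding text, that the normalization of $\chi_\eta$ here is the full orbit sum, so the value differs from the cited source by a constant multiple). What you have written is essentially a self-contained reconstruction of the standard proof behind that citation, in the spirit of Diaconis--Isaacs and Andr\'e/Yan: replace the orbit sum by a uniform average over $G\times G$ (valid since every $\mu\in G\lambda_\eta G$ is hit $|G|^2/|G\lambda_\eta G|$ times, and $\chi_\eta(1)=|G\lambda_\eta G|$ with this normalization), expand $\lambda_\eta(h_1x_\nu h_2)=c_0+L_1+L_2+B$, and evaluate the resulting Gauss sums. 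Your key combinatorial observations check out: because each vertex of a set partition has at most one arc to its right and at most one to its left (you should invoke this for $\eta$ as well as for $\nu$ --- the exclusivity of the $L_1$- and $B$-contributions at $(y_1)_{ik}$ uses the uniqueness of the rightward $\eta$-arc at $i$ together with the uniqueness of the rightward $\nu$-arc at $k$, and dually for $(y_2)_{lj}$), each matrix entry carries either a constant linear coefficient or a single bilinear nesting term, never both; the failure of the stated hypothesis is exactly the nonvanishing of $L_1$ or $L_2$, which kills the sum, while in the remaining case $B$ splits into $\textup{nst}_\nu^\eta$ bilinear forms in disjoint variable pairs, each contributing a factor $q$ instead of $q^2$, giving $\theta(c_0)\,|G|^2 q^{-\textup{nst}_\nu^\eta}$ and hence the stated formula. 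So your route buys a proof where the paper offers only a reference, at the cost of redoing a known computation; the paper's choice to cite instead keeps the focus on the new, coarser nonnesting theory, whose analogous formula (Proposition~\ref{anotherprop}) it does prove.
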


Although this formula appears identical to \cite[Equation 2.1]{MR2592079}, once again it differs by a constant multiple. This is due to the fact that the dimension of the character $\chi_\eta(1)$ appears in the formula and is different in our case. Note that while the algebra group supercharacter theory exists for all pattern groups, the above formula for the value of a supercharacter on a superclass only applies to $UT_n(\mathbb{F}_q)$. In Proposition~\ref{anotherprop}, we will show that by considering a coarser supercharacter theory of a pattern group we get a similar formula that applies to all pattern groups.

\section{Vector species and Hopf monoids}\label{secvshm}

The ideas of vector species and Hopf monoids are explored in great detail in \cite{MR2724388}. We only present the necessary definitions and results for our construction.

\subsection{Definitions}

Let $\mathbf{Set}^\times$ denote the category of finite sets with morphisms given by bijections and $\mathbf{Vec}$ denote the category of vector spaces over the field $\mathbb{F}$. A \emph{vector species} $\mathbf{p}$ is a functor
\[
        \mathbf{p}:\mathbf{Set}^\times \to \mathbf{Vec}.
\]
In other words, $\mathbf{p}$ is a collection of vector spaces $\mathbf{p}[I]$ indexed by finite sets $I$ along with linear maps
\[
        \mathbf{p}[\sigma]:\mathbf{p}[I] \to \mathbf{p}[J]
\]
for each bijection $\sigma: I \to J$. These maps must satisfy that
\[
        \mathbf{p}[\text{id}] = \text{id}
\]
and
\[
        \mathbf{p}(\sigma \circ \tau) = \mathbf{p}(\sigma) \circ \mathbf{p}(\tau).
\]

\bigbreak

If $\mathbf{p}[\emptyset] = \mathbb{F}$, we say that $\mathbf{p}$ is a \emph{connected species}. A \emph{Hopf monoid} consists of a connected vector species $\mathbf{p}$ along with linear maps
\begin{align*}
\mu_{S,T} &: \mathbf{p}[S]\otimes\mathbf{p}[T] \to \mathbf{p}[S\sqcup T]\quad\text{and} \\
\Delta_{S,T} &: \mathbf{p}[S\sqcup T] \to \mathbf{p}[S]\otimes\mathbf{p}[T]
\end{align*}
for each pair of disjoint sets $S$ and $T$. These maps must respect bijections in the sense that if $\sigma:S \to S'$ and $\tau:T \to T'$ are bijections, then the diagrams
\begin{equation}\label{functorial1}
    \begin{tikzpicture}
  \matrix (m) [matrix of math nodes,row sep=3em,column sep=4em,minimum width=2em]
  {
     \mathbf{p}[S]\otimes\mathbf{p}[T] & \mathbf{p}[S\sqcup T] \\
     \mathbf{p}[S']\otimes\mathbf{p}[T'] & \mathbf{p}[S'\sqcup T'] \\};
  \path[-stealth]
    (m-1-1) edge node [left] {$\mathbf{p}[\sigma] \otimes \mathbf{p}[\tau]$} (m-2-1)
            edge node [above] {$\mu_{S,T}$} (m-1-2)
    (m-2-1) edge node [below] {$\mu_{S',T'}$}
            (m-2-2)
    (m-1-2) edge node [right] {$\mathbf{p}[\sigma\sqcup\tau]$} (m-2-2);
    \end{tikzpicture}
\end{equation}
and
\begin{equation}\label{functorial2}
    \begin{tikzpicture}
  \matrix (m) [matrix of math nodes,row sep=3em,column sep=4em,minimum width=2em]
  {
     \mathbf{p}[S]\otimes\mathbf{p}[T] & \mathbf{p}[S\sqcup T] \\
     \mathbf{p}[S']\otimes\mathbf{p}[T'] & \mathbf{p}[S'\sqcup T'] \\};
  \path[-stealth]
    (m-1-1) edge node [left] {$\mathbf{p}[\sigma] \otimes \mathbf{p}[\tau]$} (m-2-1)
     (m-1-2)   edge node [above] {$\Delta_{S,T}$} (m-1-1)
    (m-2-2) edge node [below] {$\Delta_{S',T'}$}
            (m-2-1)
    (m-1-2) edge node [right] {$\mathbf{p}[\sigma\sqcup\tau]$} (m-2-2);
    \end{tikzpicture}
\end{equation}
commute. If $\mu$ denotes the collection of $\mu_{S,T}$ and $\Delta$ the collection of $\Delta_{S,T}$, where each collection varies over all pairs of disjoint finite sets $S$ and $T$, then $\mu$ is called the \emph{product} and $\Delta$ is called the \emph{coproduct}.

\bigbreak

These collections must be \emph{associative} and \emph{coassociative} in that the diagrams
\begin{equation}\label{associative}
\begin{tikzpicture}
  \matrix (m) [matrix of math nodes,row sep=3em,column sep=4em,minimum width=2em]
  {
     \mathbf{p}[R]\otimes\mathbf{p}[S]\otimes\mathbf{p}[T] & \mathbf{p}[R] \otimes \mathbf{p}[S\sqcup T] \\
     \mathbf{p}[R\sqcup S]\otimes\mathbf{p}[T] & \mathbf{p}[R\sqcup S\sqcup T] \\};
  \path[-stealth]
    (m-1-1) edge node [left] {$\mu_{R,S}\otimes\text{id}$} (m-2-1)
            edge node [above] {$\text{id}\otimes\mu_{S,T}$} (m-1-2)
    (m-2-1) edge node [below] {$\mu_{R\sqcup S,T}$}
            (m-2-2)
    (m-1-2) edge node [right] {$\mu_{R,S\sqcup T}$} (m-2-2);
    \end{tikzpicture}
\end{equation}
and
\begin{equation}\label{coassociative}
\begin{tikzpicture}
  \matrix (m) [matrix of math nodes,row sep=3em,column sep=4em,minimum width=2em]
  {
     \mathbf{p}[R]\otimes\mathbf{p}[S]\otimes\mathbf{p}[T] & \mathbf{p}[R] \otimes \mathbf{p}[S\sqcup T] \\
     \mathbf{p}[R\sqcup S]\otimes\mathbf{p}[T] & \mathbf{p}[R\sqcup S\sqcup T] \\};
  \path[-stealth]
    (m-2-1) edge node [left] {$\Delta_{R,S}\otimes\text{id}$} (m-1-1)
    (m-1-2) edge node [above] {$\text{id}\otimes\Delta_{S,T}$} (m-1-1)
    (m-2-2) edge node [below] {$\Delta_{R\sqcup S,T}$}
            (m-2-1)
    (m-2-2) edge node [right] {$\Delta_{R,S\sqcup T}$} (m-1-2);
    \end{tikzpicture}
\end{equation}
commute for all pairwise disjoint finite sets $R$, $S$, and $T$. They also must satisfy a compatibility property. Let $(S_1,S_2)$ and $(T_1,T_2)$ be pairs of disjoint finite sets such that $S_1\sqcup S_2 = T_1 \sqcup T_2 =I$. Let
\[
        A = S_1 \cap T_1, \quad B = S_1\cap T_2, \quad C = S_2 \cap T_1,\quad\text{and} \quad D = S_2 \cap T_2;
\]
then the diagram
\begin{equation}\label{hopfcompatible}
\begin{tikzpicture}[font=\small]
  \matrix (m) [matrix of math nodes,row sep=3em,column sep=4em,minimum width=2em]
  {
     \mathbf{p}[S_1]\otimes\mathbf{p}[S_2] & \mathbf{p}[I] & \mathbf{p}[T_1]\otimes\mathbf{p}[T_2] \\
     \mathbf{p}[A]\otimes\mathbf{p}[B]\otimes \mathbf{p}[C]\otimes\mathbf{p}[D] & {} & \mathbf{p}[A]\otimes\mathbf{p}[C]\otimes \mathbf{p}[B]\otimes\mathbf{p}[D] \\};
  \path[-stealth]
    (m-1-1) edge node [above] {$\mu_{S_1,S_2}$} (m-1-2)
    (m-1-2) edge node [above] {$\Delta_{T_1,T_2}$} (m-1-3)
    (m-2-1) edge node [below] {$\cong$} (m-2-3)
    (m-1-1) edge node [left] {$\Delta_{A,B}\otimes\Delta_{C,D}$} (m-2-1)
    (m-2-3) edge node [right] {$\mu_{A,C}\otimes\mu_{B,D}$} (m-1-3);
    \end{tikzpicture}
\end{equation}
must commute. We also require that the maps
\begin{align*}
    \mu_{S,\emptyset}&:\mathbf{p}[S]\otimes \mathbf{p}[\emptyset] \to \mathbf{p}[S], \\
    \mu_{\emptyset,S}&:\mathbf{p}[\emptyset]\otimes \mathbf{p}[S] \to \mathbf{p}[S], \\
    \Delta_{S,\emptyset}&:\mathbf{p}[S]\to\mathbf{p}[S]\otimes \mathbf{p}[\emptyset],\text{ and}  \\
    \Delta_{\emptyset,S}&:\mathbf{p}[S]\to\mathbf{p}[\emptyset]\otimes \mathbf{p}[S]
\end{align*}
are the canonical identifications.

\bigbreak

We call a Hopf monoid \emph{commutative} (respectively \emph{cocommutative}) if the left (respectively right) diagram commutes for all $S,T$.
\[
    \begin{tikzpicture}
  \matrix (m) [matrix of math nodes,row sep=3em,column sep=0em,minimum width=1em]
  {
     \mathbf{p}[S]\otimes\mathbf{p}[T] &{}& \mathbf{p}[T]\otimes \mathbf{p}[S] \\
     {} & \mathbf{p}[S \sqcup T] & {} \\};
  \path[-stealth]
    (m-1-1) edge node [above] {$\cong$} (m-1-3)
    (m-1-1) edge node [auto,swap] {$\mu_{S,T}$}(m-2-2)
    (m-1-3) edge node [auto] {$\mu_{T,S}$} (m-2-2);
    \end{tikzpicture} \hspace{.2in}
    \begin{tikzpicture}
  \matrix (m) [matrix of math nodes,row sep=3em,column sep=0em,minimum width=1em]
  {
     \mathbf{p}[S]\otimes\mathbf{p}[T] &{}& \mathbf{p}[T]\otimes \mathbf{p}[S] \\
     {} & \mathbf{p}[S \sqcup T] & {} \\};
  \path[-stealth]
    (m-1-1) edge node [above] {$\cong$} (m-1-3)
    (m-2-2) edge node [auto] {$\Delta_{S,T}$}(m-1-1)
    (m-2-2) edge node [auto,swap] {$\Delta_{T,S}$} (m-1-3);
    \end{tikzpicture}
\]
We will denote a Hopf monoid by the triple $(\mathbf{p},\mu,\Delta)$.

\begin{remark} The above definition of a Hopf monoid assumes that the species $\mathbf{p}$ is connected. Hopf monoids are defined for an arbitrary vector species in \cite[Section~8.3]{MR2724388}. For our purposes we only need to consider connected Hopf monoids, which gives us the unit, counit, and antipode for free (see \cite[Section~8.4]{MR2724388} for details). There is a more rigorous treatment of connected Hopf monoids in \cite[Section~2.2]{marbergselfdual}.
\end{remark}

\subsection{The Hopf monoid on posets}

For a finite set $S$, let $\mathbf{p}[S]$ be the $\mathbb{F}$-vector space with basis
\[
        \{x_\mathcal{P} \mid \mathcal{P} \text{ is a poset on }S\}
\]
and $\mathbf{p}[\emptyset]=\mathbb{F}$. Given a bijection $\sigma:S \to T$, define
\begin{align*}
        \mathbf{p}[\sigma]:\mathbf{p}[S] &\to \mathbf{p}[T] \\
        x_\mathcal{P} &\mapsto x_{\sigma\mathcal{P}},
\end{align*}
where $\sigma\mathcal{P}$ is the poset on $T$ with $t_1 \prec_{\sigma\mathcal{P}} t_2$ if and only if $\sigma^{-1}(t_1) \prec_{\mathcal{P}} \sigma^{-1}(t_2)$. This defines a vector species.

\bigbreak

Let $S$ and $T$ be disjoint finite sets. If $\mathcal{P}$ is a poset on $S$ and $\mathcal{Q}$ is a poset on $T$, define $\mathcal{P} \cdot \mathcal{Q}$ to be the poset on $S \sqcup T$ with $x \prec_{\mathcal{P} \cdot \mathcal{Q}} y$ if and only if
\begin{enumerate}
\item $x,y \in S$ and $x \prec_{\mathcal{P}} y$,
\item $x,y \in T$ and $x \prec_{\mathcal{Q}} y$, or
\item $x \in S$ and $y \in T$.
\end{enumerate}

\begin{example} Let $S = \{a,b,c,d\}$ and $T = \{x,y,z\}$ with posets $\mathcal{P}$ and $\mathcal{Q}$ given by the Hasse diagrams
\[
    \begin{tikzpicture}[baseline={([yshift=-.5ex]current bounding box.center)}]
    \node (a) at (0,0) {$a$};
    \node (b) at (-1,1) {$b$};
    \node (c) at (0,1) {$c$};
    \node (d) at (1,1) {$d$};
    \draw (a) -- (b);
    \draw (a) -- (c);
    \draw (a) -- (d);
\end{tikzpicture} \quad \text{and} \quad
\begin{tikzpicture}[baseline={([yshift=-.5ex]current bounding box.center)}]
    \node (x) at (-.5,0) {$x$};
    \node (y) at (.5,0) {$y$};
    \node (z) at (.5,1) {$z$};
    \draw (y) -- (z);
\end{tikzpicture}.
\]
The Hasse diagram of $\mathcal{P}\cdot \mathcal{Q}$ is
\[
    \begin{tikzpicture}[baseline={([yshift=-.5ex]current bounding box.center)}]
    \node (a) at (0,0) {$a$};
    \node (b) at (-1,1) {$b$};
    \node (c) at (0,1) {$c$};
    \node (d) at (1,1) {$d$};
    \node (x) at (-.5,2) {$x$};
    \node (y) at (.5,2) {$y$};
    \node (z) at (.5,3) {$z$};
    \draw (a) -- (b)--(x);
    \draw (a) -- (c)--(x);
    \draw (a) -- (d)--(x);
    \draw (b) -- (y)--(z);
    \draw (c) -- (y);
    \draw (d) -- (y);
\end{tikzpicture}.
\]
\end{example}

For any subset $S \subseteq I$, we can define the restriction of a poset $\mathcal{P}$ of $I$ to a poset $\mathcal{P}|_S$ of $S$ by $x \preceq_{\mathcal{P}|_S}y$ if and only if $x,y \in S$ and $x \preceq_{\mathcal{P}}y$.

\begin{example} Let $I = \{a,b,c,d\}$ and $\mathcal{P}$ be the poset with Hasse diagram
\[
\begin{tikzpicture}
    \node (a) at (0,0) {$a$};
    \node (b) at (0,1) {$b$};
    \node (c) at (-.5,2) {$c$};
    \node (d) at (.5,2) {$d$};
    \draw (a) -- (b);
    \draw (b) -- (c);
    \draw (b) -- (d);
\end{tikzpicture}.
\]
If $S = \{a,c,d\}$, then the Hasse diagram of $\mathcal{P}|_S$ is
\[
\begin{tikzpicture}
    \node (a) at (0,0) {$a$};
    \node (c) at (-.5,1) {$c$};
    \node (d) at (.5,1) {$d$};
    \draw (a) -- (c);
    \draw (a) -- (d);
\end{tikzpicture}.
\]
\end{example}

For two disjoint finite sets $S$ and $T$, define
\[
        \mu_{S,T}(x_\mathcal{P}\otimes x_\mathcal{Q}) = x_{\mathcal{P}\cdot\mathcal{Q}}
\]
and
\[
        \Delta_{S,T}(x_\mathcal{P}) = x_{\mathcal{P}|_S} \otimes x_{\mathcal{P}|_T},
\]
then extend by linearity.

\begin{proposition}[{\cite[Section 13.1.1]{MR2724388}}] The product and coproduct defined above make $(\mathbf{p},\mu,\Delta)$ a Hopf monoid. This monoid is connected and cocommutative but not commutative.
\end{proposition}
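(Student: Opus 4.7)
The plan is to check the defining axioms for each basis element $x_\mathcal{P}$ and extend by linearity, so everything reduces to combinatorial identities on posets under ordinal sum $\cdot$ and restriction $|_S$.

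First I would dispense with functoriality: since $\mathbf{p}[\sigma]$ is defined by relabeling, the operations $\cdot$ and $|_S$ commute with bijective relabeling in an obvious way, so diagrams \eqref{functorial1} and \eqref{functorial2} commute. Associativity reduces to $(\mathcal{P}\cdot\mathcal{Q})\cdot\mathcal{R} = \mathcal{P}\cdot(\mathcal{Q}\cdot\mathcal{R})$ as posets on $R \sqcup S \sqcup T$: both posets are described by the rule $x \preceq y$ iff ($x,y$ lie in the same block among $R,S,T$ and are comparable in the original poset on that block) or ($x$ lies in an earlier block than $y$). Coassociativity is even easier, since $(\mathcal{P}|_{R\sqcup S})|_R = \mathcal{P}|_R = (\mathcal{P}|_{R\sqcup S \sqcup T})|_R$, and similarly for the other pieces.

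The main content is the Hopf compatibility diagram \eqref{hopfcompatible}. Starting with $x_\mathcal{P} \otimes x_\mathcal{Q}$ with $\mathcal{P}$ a poset on $S_1$ and $\mathcal{Q}$ a poset on $S_2$, the top path yields $x_{(\mathcal{P}\cdot\mathcal{Q})|_{T_1}} \otimes x_{(\mathcal{P}\cdot\mathcal{Q})|_{T_2}}$, while the bottom path yields $x_{\mathcal{P}|_A \cdot \mathcal{Q}|_C} \otimes x_{\mathcal{P}|_B \cdot \mathcal{Q}|_D}$. The key identity to verify is
\[
        (\mathcal{P}\cdot\mathcal{Q})|_{T_1} = \mathcal{P}|_A \cdot \mathcal{Q}|_C,
\]
and similarly for $T_2$. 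This follows from unpacking the definitions: for $x,y \in T_1 = A \sqcup C$, the relation $x \preceq y$ in $(\mathcal{P}\cdot\mathcal{Q})|_{T_1}$ holds iff either both lie in $A \subseteq S_1$ with $x \preceq_\mathcal{P} y$, or both lie in $C \subseteq S_2$ with $x \preceq_\mathcal{Q} y$, or $x \in A \subseteq S_1$ and $y \in C \subseteq S_2$; these are precisely the three cases in the definition of $\mathcal{P}|_A \cdot \mathcal{Q}|_C$. I expect this to be the main (though still routine) step. The unit/counit normalizations are automatic from $\mathbf{p}[\emptyset] = \mathbb{F}$ and the fact that $\mathcal{P} \cdot \emptyset = \mathcal{P} = \emptyset \cdot \mathcal{P}$ and $\mathcal{P}|_S = \mathcal{P}$.

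Finally, cocommutativity follows because $\Delta_{S,T}(x_\mathcal{P}) = x_{\mathcal{P}|_S} \otimes x_{\mathcal{P}|_T}$ and $\Delta_{T,S}(x_\mathcal{P}) = x_{\mathcal{P}|_T} \otimes x_{\mathcal{P}|_S}$ are interchanged by the canonical swap $\mathbf{p}[S] \otimes \mathbf{p}[T] \cong \mathbf{p}[T] \otimes \mathbf{p}[S]$. For non-commutativity, it suffices to exhibit one failure: take $S = \{s\}$, $T = \{t\}$, and let $\mathcal{P},\mathcal{Q}$ be the unique posets on these singletons. Then $\mathcal{P} \cdot \mathcal{Q}$ has $s \prec t$ while $\mathcal{Q} \cdot \mathcal{P}$ has $t \prec s$, so $\mu_{S,T}$ and $\mu_{T,S}$ do not agree under the swap. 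Connectedness is immediate from $\mathbf{p}[\emptyset] = \mathbb{F}$.
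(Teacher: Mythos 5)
Your verification is correct, and since the paper does not prove this proposition itself but simply cites Aguiar--Mahajan \cite[Section 13.1.1]{MR2724388}, your direct check of the axioms is exactly the routine argument being outsourced to that reference: the key points — associativity of the ordinal sum $\mathcal{P}\cdot\mathcal{Q}$, transitivity of restriction, the compatibility identity $(\mathcal{P}\cdot\mathcal{Q})|_{T_1} = \mathcal{P}|_A \cdot \mathcal{Q}|_C$, order-independence of $x_{\mathcal{P}|_S}\otimes x_{\mathcal{P}|_T}$ for cocommutativity, and the two-singleton counterexample for noncommutativity — are all stated and justified correctly. No gaps.
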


\begin{remark} This Hopf monoid structure on $\mathbf{p}$ is the same as that of the dual Hopf monoid $\mathbf{P}^*$ constructed by Aguiar--Mahajan in \cite[Section 13.1.1]{MR2724388} with respect to a different basis.
\end{remark}

\subsection{The free Hopf monoid on a species}

Given a species $\mathbf{q}$ with $\mathbf{q}[\emptyset]=0$, we define the \emph{free Hopf monoid on} $\mathbf{q}$, denoted $\mathcal{T}(\mathbf{q})$, as follows. The underlying species structure has $\mathcal{T}(\mathbf{q})[\emptyset]=\mathbb{F}$, and for $I$ nonempty,
\[
        \mathcal{T}(\mathbf{q})[I] = \bigoplus \mathbf{q}[S_1]\otimes \mathbf{q}[S_2]\otimes \hdots \mathbf{q}[S_k],
\]
where the sum is over all ordered set partitions $(S_1,S_2,\hdots,S_k)$ of $I$ (in particular, each $S_i$ must be nonempty). The product on $\mathcal{T}(\mathbf{q})$ is given by concatenation; that is, if $x = x_1\otimes x_2 \otimes \hdots \otimes x_k \in \mathcal{T}(\mathbf{q})[S]$ and $y = y_1\otimes y_2 \otimes \hdots \otimes y_m \in \mathcal{T}(\mathbf{q})[T]$, then
\[
        \mu_{S,T}(x \otimes y) = x_1\otimes x_2 \otimes \hdots x_k \otimes y_1\otimes y_2 \otimes \hdots \otimes y_m.
\]
The coproduct on $\mathcal{T}(\mathbf{q})$ is defined by
\[
        \Delta_{S,T}(x) = 0
\]
for all $x \in \mathbf{q}[I]$ and all decompositions $I = S \sqcup T$ into nonempty subsets. Note that $\mathcal{T}(\mathbf{q})$ is a connected, cocommutative Hopf monoid. For more detail on the free Hopf monoid and other universal constructions of Hopf monoids, see \cite[Chapter 11]{MR2724388}.

\bigbreak

The following proposition allows us to show that a given Hopf monoid is free without consideration of the coproduct.

\begin{proposition}[{\cite[Proposition 23]{MR3117506}}]\label{propfree} Let $\mathbb{F}$ be a field of characteristic 0. Let $\mathbf{p}$ be a connected, cocommutative Hopf monoid over $\mathbb{F}$, and let $\mathbf{q}$ be a species over $\mathbb{F}$ such that $\mathbf{q}[\emptyset]=0$. If $\mathbf{p}$ and $\mathcal{T}(\mathbf{q})$ are isomorphic as monoids, then $\mathbf{p}$ and $\mathcal{T}(\mathbf{q})$ are isomorphic as Hopf monoids (although perhaps by a different isomorphism).
\end{proposition}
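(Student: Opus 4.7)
The plan is to invoke the species-level Cartier--Milnor--Moore theorem (see \cite[Chapter~11]{MR2724388}), which over a field of characteristic zero gives an equivalence of categories between Lie monoids and connected cocommutative Hopf monoids via the universal enveloping algebra functor $\mathcal{U}$, with quasi-inverse the primitives functor $\mathcal{P}$. Applied to $\mathbf{p}$ this yields $\mathbf{p} \cong \mathcal{U}(\mathcal{P}(\mathbf{p}))$ as Hopf monoids, while applied to $\mathcal{T}(\mathbf{q})$---combined with the species analog of the classical identification of the tensor algebra with the enveloping algebra of its free Lie subalgebra---it yields $\mathcal{T}(\mathbf{q}) \cong \mathcal{U}(\mathrm{Lie}(\mathbf{q}))$ as Hopf monoids, where $\mathrm{Lie}(\mathbf{q})$ is the free Lie monoid on $\mathbf{q}$. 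Thus the proposition reduces to producing a Lie monoid isomorphism $\mathcal{P}(\mathbf{p}) \cong \mathrm{Lie}(\mathbf{q})$.

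Next I would exploit the hypothesis that $\mathbf{p} \cong \mathcal{T}(\mathbf{q})$ as monoids. Under the identification from the previous step this says that $\mathcal{U}(\mathcal{P}(\mathbf{p}))$ is a free monoid, and I would invoke the characteristic-zero fact that a universal enveloping algebra $\mathcal{U}(\mathcal{L})$ is free as an associative monoid if and only if the underlying Lie monoid $\mathcal{L}$ is free. This supplies a species $\mathbf{q}'$ with $\mathbf{q}'[\emptyset] = 0$ and a Lie monoid isomorphism $\mathcal{P}(\mathbf{p}) \cong \mathrm{Lie}(\mathbf{q}')$, so that $\mathbf{p} \cong \mathcal{T}(\mathbf{q}')$ as Hopf monoids.

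To finish I would identify $\mathbf{q}'$ with $\mathbf{q}$ using a functorial invariant. The species of indecomposables $\mathbf{m}_{+}/\mathbf{m}_{+}^{2}$ of a connected monoid $\mathbf{m}$ depends only on the monoid structure, and for a free monoid $\mathcal{T}(\mathbf{r})$ it recovers the generating species $\mathbf{r}$. Since $\mathbf{p}$ is isomorphic as a monoid to both $\mathcal{T}(\mathbf{q})$ and $\mathcal{T}(\mathbf{q}')$, both $\mathbf{q}$ and $\mathbf{q}'$ must agree with its species of indecomposables, and therefore $\mathbf{q} \cong \mathbf{q}'$ as species. Hence $\mathrm{Lie}(\mathbf{q}) \cong \mathrm{Lie}(\mathbf{q}')$ as Lie monoids, and applying $\mathcal{U}$ yields the Hopf monoid isomorphism $\mathbf{p} \cong \mathcal{T}(\mathbf{q})$ claimed by the proposition.

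The main obstacle will be the second step: establishing, in the species setting, the characterization of when $\mathcal{U}(\mathcal{L})$ is free as a monoid in terms of freeness of $\mathcal{L}$. The vector-space analog is a standard consequence of the Poincar\'e--Birkhoff--Witt theorem together with the freeness criterion for Lie algebras, and transferring the argument should work degree-by-degree because the symmetric-group actions that distinguish species from graded vector spaces are compatible with the PBW filtration. Writing this transfer out carefully---and verifying that the species version of Cartier--Milnor--Moore yields the full equivalence of categories rather than merely a bijection on isomorphism classes---will constitute the technical heart of the argument.
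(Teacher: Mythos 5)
There is no internal proof to compare against here: Proposition~\ref{propfree} is imported verbatim from Aguiar--Bergeron--Thiem \cite{MR3117506} and used as a black box in Section~\ref{subsecfree}, so your proposal must stand on its own. Its frame is sensible, and two of its three steps are solid: the species Cartier--Milnor--Moore theorem (which is indeed available in characteristic $0$ as a genuine equivalence, so your closing worry on that point is not an issue) reduces the problem to comparing $\mathfrak{g}=$ the primitive Lie monoid of $\mathbf{p}$ with $\mathrm{Lie}(\mathbf{q})$, and the indecomposables argument correctly forces $\mathbf{q}'\cong\mathbf{q}$ once you know $\mathbf{p}\cong\mathcal{T}(\mathbf{q}')$ as Hopf monoids. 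The genuine gap is the middle step, precisely the one you defer: the claim that $\mathcal{U}(\mathfrak{g})$ free as a monoid forces $\mathfrak{g}$ to be a free Lie monoid. After CMM this claim \emph{is} the proposition, so leaving it as a ``standard vector-space fact plus transfer'' leaves the heart of the proof unwritten. Moreover the vector-space statement is not standard in the generality you invoke: for an ungraded Lie algebra, freeness of $\mathcal{U}(L)$ as an associative algebra gives only that $\mathcal{U}(L)$ is hereditary, i.e.\ that $L$ has cohomological dimension at most one, and passing from that back to freeness of $L$ is the nontrivial Lie analogue of the Stallings--Swan problem, not a routine consequence of PBW. What saves the claim in the present setting is connectedness: $\mathbf{q}[\emptyset]=0$ and $\mathbf{p}[\emptyset]=\mathbb{F}$ give a grading by cardinality concentrated in positive degrees, and for such $\mathfrak{g}$ one argues via a minimal presentation $0\to R\to \mathrm{Lie}(V)\to\mathfrak{g}\to 0$ with $V=\mathfrak{g}/[\mathfrak{g},\mathfrak{g}]$, the identification of $R/[\mathrm{Lie}(V),R]$ with a $\mathrm{Tor}_2$ over $\mathcal{U}(\mathfrak{g})$ (which vanishes when $\mathcal{U}(\mathfrak{g})$ is a free monoid), and graded Nakayama to conclude $R=0$. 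None of this appears in your write-up, so as it stands you have a correct reduction with its central lemma unproved.

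A more economical completion avoids the Lie-theoretic detour entirely and is closer to how such statements are usually established. Fix a monoid isomorphism $\varphi:\mathcal{T}(\mathbf{q})\to\mathbf{p}$ and let $\pi$ be the convolution logarithm of the identity (the first Eulerian idempotent) of $\mathbf{p}$; cocommutativity and characteristic $0$ guarantee that $\pi$ is a projection onto the primitives, and $\pi\equiv\mathrm{id}$ modulo products of two or more elements of the augmentation ideal. Replacing each generator $\varphi(x)$, $x\in\mathbf{q}$, by $\pi(\varphi(x))$ leaves the image in the indecomposables $\mathbf{p}_+/\mathbf{p}_+^2$ unchanged, so the new subspecies is isomorphic to $\mathbf{q}$ and still generates; the induced monoid morphism $\mathcal{T}(\mathbf{q})\to\mathbf{p}$ is then surjective between species of equal finite dimensions in each cardinality, hence an isomorphism, and since it carries $\mathbf{q}$ into primitive elements it is automatically a morphism of Hopf monoids. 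Writing out either this argument or the graded $\mathrm{Tor}_2$ argument above is what your proposal still owes.
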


In Section~\ref{subsecfree} we will use this result to show that the Hopf monoid of nonnesting supercharacters is free.

\section{The nonnesting supercharacter theory of the unitriangular matrices}\label{secnnutn}

In this section we present a supercharacter theory of $UT_n(\mathbb{F}_q)$ that is coarser than the algebra group supercharacter theory and has supercharacters and superclasses indexed by the nonnesting $\mathbb{F}_q$-set partitions. This supercharacter theory is mentioned by the author in \cite[Section~7]{andrews2}, although the construction we present here is quite different.

\bigbreak

Let $\eta$ be an $\mathbb{F}_q$-set partition, and define
\begin{align*}
        \text{sml}(\eta) &= \{i \overset{a}{\frown}j \in \eta \mid \text{ there are no } k\overset{b}{\frown}l \in \eta \text{ with } i<k<l<j\} \text{ and} \\
        \text{big}(\eta) &= \{i \overset{a}{\frown}j \in \eta \mid \text{ there are no } k\overset{b}{\frown}l \in \eta \text{ with } k<i<j<l\}.
\end{align*}
In other words, if $\eta$ contains two arcs of the form
\[
        \begin{tikzpicture}[baseline={([yshift=-2ex]current bounding box.center)}]
	\fill (0,0) circle (.1) node[below]{$i$};
    \fill (1,0) circle (.1) node[below]{$j$};
    \fill (2,0) circle (.1) node[below]{$k$};
    \fill (3,0) circle (.1) node[below]{$l$};
    \draw (0,0) to [out=75, in=105] node[above]{$a$} (3,0);
    \draw (1,0) to [out=45, in=135] node[above]{$b$}(2,0);
    \end{tikzpicture},
\]
then the arc $i \overset{a}{\frown}l$ is removed in $\text{sml}(\eta)$ and the arc $j \overset{b}{\frown} k$ is removed in $\text{big}(\eta)$.

\begin{example}\label{exbigsml} If $n=12$ and
\[
        \eta = \begin{tikzpicture}
	\fill (0,0) circle (.1);
    \fill (.5,0) circle (.1);
    \fill (1,0) circle (.1);
    \fill (1.5,0) circle (.1);
    \fill (2,0) circle (.1);
    \fill (2.5,0) circle (.1);
    \fill (3,0) circle (.1);
    \fill (3.5,0) circle (.1);
    \fill (4,0) circle (.1);
    \fill (4.5,0) circle (.1);
    \fill (5,0) circle (.1);
    \fill (5.5,0) circle (.1);
    \draw (0,0) to [out=45, in=135] node[above] {$a_1$} (1,0);
	\draw (1,0) to [out=75, in=105] node[above] {$a_2$} (3,0);
    \draw (1.5,0) to [out=45, in=135] node[above] {$a_3$} (2,0);
	\draw (2.5,0) to [out=45, in=135] node[above] {$a_4$} (5,0);
    \draw (3.5,0) to [out=45, in=135] node[above] {$a_5$} (4,0);
	\draw (4.5,0) to [out=45, in=135] node[above] {$a_6$} (5.5,0);
    \end{tikzpicture},
\]
then we have
\begin{align*}
\text{sml}(\eta) &= \begin{tikzpicture}
	\fill (0,0) circle (.1);
    \fill (.5,0) circle (.1);
    \fill (1,0) circle (.1);
    \fill (1.5,0) circle (.1);
    \fill (2,0) circle (.1);
    \fill (2.5,0) circle (.1);
    \fill (3,0) circle (.1);
    \fill (3.5,0) circle (.1);
    \fill (4,0) circle (.1);
    \fill (4.5,0) circle (.1);
    \fill (5,0) circle (.1);
    \fill (5.5,0) circle (.1);
    \draw (0,0) to [out=45, in=135] node[above] {$a_1$} (1,0);
    \draw (1.5,0) to [out=45, in=135] node[above] {$a_3$} (2,0);
    \draw (3.5,0) to [out=45, in=135] node[above] {$a_5$} (4,0);
	\draw (4.5,0) to [out=45, in=135] node[above] {$a_6$} (5.5,0);
    \end{tikzpicture} \quad\text{ and} \\
        \text{big}(\eta) &= \begin{tikzpicture}
	\fill (0,0) circle (.1);
    \fill (.5,0) circle (.1);
    \fill (1,0) circle (.1);
    \fill (1.5,0) circle (.1);
    \fill (2,0) circle (.1);
    \fill (2.5,0) circle (.1);
    \fill (3,0) circle (.1);
    \fill (3.5,0) circle (.1);
    \fill (4,0) circle (.1);
    \fill (4.5,0) circle (.1);
    \fill (5,0) circle (.1);
    \fill (5.5,0) circle (.1);
    \draw (0,0) to [out=45, in=135] node[above] {$a_1$} (1,0);
	\draw (1,0) to [out=45, in=135] node[above] {$a_2$} (3,0);
	\draw (2.5,0) to [out=45, in=135] node[above] {$a_4$} (5,0);
	\draw (4.5,0) to [out=45, in=135] node[above] {$a_6$} (5.5,0);
    \end{tikzpicture}.
\end{align*}
\end{example}
Note that both $\text{sml}(\eta)$ and $\text{big}(\eta)$ are nonnesting $\mathbb{F}_q$-set partitions. These methods of producing a nonnesting $\mathbb{F}_q$-set partition from an arbitrary $\mathbb{F}_q$-set partition both define equivalence relations on the set of $\mathbb{F}_q$-set partitions, and in both cases the equivalence classes are indexed by the nonnesting $\mathbb{F}_q$-set partitions.  For a nonnesting $\mathbb{F}_q$-set partition $\eta$, let
\begin{align*}
        K_{[\eta]} &= \bigcup_{\text{sml}(\nu) = \eta} K_{\nu} \text{ and} \\
        \chi_{[\eta]} &= \sum_{\text{big}(\nu)=\eta} \chi_{\nu},
\end{align*}
where $K_\nu$ and $\chi_\nu$ are as in Corollary~\ref{cortypeaschscl}. Note that we have
\[
        K_{[\eta]} = \left\{ g \in UT_n(\mathbb{F}_q) \;\left|\;\begin{array}{l} g_{ij} = a \text{ for all } i\overset{a}{\frown} j \in \eta, \text{ and } g_{kl} = 0 \text{ unless}\\
        \text{there exists } i\overset{a}{\frown} j \in \eta \text{ with } k \leq i < j \leq l\end{array}\right.\right\}.
\]

\begin{remark} The reason that we use $\text{sml}(\eta)$ to define the superclasses and $\text{big}(\eta)$ to define the supercharacters has to do with how $UT_n(\mathbb{F}_q)$ acts on $\frak{ut}_n(\mathbb{F}_q)$ and $\frak{ut}_n(\mathbb{F}_q)^*$ by left and right multiplication. Loosely speaking, the action on $\frak{ut}_n(\mathbb{F}_q)$ is by row and column reduction, where a row can be added to any row above it and a column can be added to any column to its right. This corresponds to removing larger arcs from a set partition. The action on $\frak{ut}_n(\mathbb{F}_q)^*$ is also by row and column reduction, but in this case a row can be added to any row below it and a column can be added to any column to its left. This corresponds to removing smaller arcs from a set partition.
\end{remark}

In order to provide an alternative description of the characters $\chi_{[\eta]}$, we define a subgroup
\begin{equation}\label{eqtueta}
        U_\eta = \bigg\{g \in UT_n(\mathbb{F}_q) \;\bigg|\;\begin{array}{l} g_{ij} = 0 \text{ if there exists }k \overset{a}{\frown}l \in \eta \text{ with } \\(i,j) \neq (k,l) \text{ and } k \leq i<j \leq l\end{array}\bigg\}.
\end{equation}

\begin{example} Let
\[
        \eta = \begin{tikzpicture}
	\fill (0,0) circle (.1);
    \fill (.5,0) circle (.1);
    \fill (1,0) circle (.1);
    \fill (1.5,0) circle (.1);
    \fill (2,0) circle (.1);
    \fill (2.5,0) circle (.1);
    \fill (3,0) circle (.1);
    \fill (3.5,0) circle (.1);
    \fill (4,0) circle (.1);
    \fill (4.5,0) circle (.1);
    \fill (5,0) circle (.1);
    \fill (5.5,0) circle (.1);
    \draw (0,0) to [out=45, in=135] node[above] {$a_1$} (1,0);
	\draw (1,0) to [out=75, in=105] node[above] {$a_2$} (3,0);
	\draw (2.5,0) to [out=45, in=135] node[above] {$a_3$} (5,0);
	\draw (4.5,0) to [out=45, in=135] node[above] {$a_4$} (5.5,0);
    \end{tikzpicture};
\]
then we have
\[
        U_\eta = \left\{\left(\begin{array}{llllllllllll}
        1 & 0 & \bm{*} & * & * & * & * & * & * & * & * & * \\
        0 & 1 & 0 & * & * & * & * & * & * & * & * & * \\
        0 & 0 & 1 & 0 & 0 & 0 & \bm{*} & * & * & * & * & * \\
        0 & 0 & 0 & 1 & 0 & 0 & 0 & * & * & * & * & * \\
        0 & 0 & 0 & 0 & 1 & 0 & 0 & * & * & * & * & * \\
        0 & 0 & 0 & 0 & 0 & 1 & 0 & 0 & 0 & 0 & \bm{*} & * \\
        0 & 0 & 0 & 0 & 0 & 0 & 1 & 0 & 0 & 0 & 0 & * \\
        0 & 0 & 0 & 0 & 0 & 0 & 0 & 1 & 0 & 0 & 0 & * \\
        0 & 0 & 0 & 0 & 0 & 0 & 0 & 0 & 1 & 0 & 0 & * \\
        0 & 0 & 0 & 0 & 0 & 0 & 0 & 0 & 0 & 1 & 0 & \bm{*} \\
        0 & 0 & 0 & 0 & 0 & 0 & 0 & 0 & 0 & 0 & 1 & 0 \\
        0 & 0 & 0 & 0 & 0 & 0 & 0 & 0 & 0 & 0 & 0 & 1
        \end{array}\right)\right\},
\]
where the bold entries correspond to the arcs of $\eta$.
\end{example}

\begin{lemma}\label{lemchietaind} Let $U_\eta$ be as in Equation~\ref{eqtueta}; then
\[
        \chi_{[\eta]} = \textup{Ind}_{U_\eta}^{UT_n(\mathbb{F}_q)} \textup{Res}_{U_\eta}^{UT_n(\mathbb{F}_q)}(\theta \circ \lambda_\eta \circ f),
\]
where $\lambda_\eta(x) = \sum_{i \overset{a}{\frown}j \in \eta} ax_{ij}$, as in Proposition~\ref{lemtypeascl}.
\end{lemma}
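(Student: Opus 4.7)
The plan is to verify three claims in sequence: $(i)$ $U_\eta$ is a subgroup of $G := UT_n(\mathbb{F}_q)$; $(ii)$ $\tilde\psi := (\theta\circ\lambda_\eta\circ f)|_{U_\eta}$ is a linear character of $U_\eta$; and $(iii)$ the induced character $\textup{Ind}_{U_\eta}^{G}(\tilde\psi)$ agrees with $\chi_{[\eta]}$. Claims $(i)$ and $(ii)$ both reduce to computing $(xy)_{ij}$ for $x, y \in \frak{u}_\eta := f(U_\eta)$ and certain positions $(i,j)$: for $(i)$ the forbidden positions of $\frak{u}_\eta$, and for $(ii)$ the arc positions of $\eta$. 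In either case, a nonzero summand $x_{im}y_{mj}$ of $(xy)_{ij}$ would require both $(i,m)$ and $(m,j)$ to be allowed positions of $\frak{u}_\eta$, and a case analysis on the location of $m$ relative to the relevant arc of $\eta$ rules out every such $m$ using the nonnesting of $\eta$.

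The substantive step is $(iii)$, which I plan to handle by matching both sides in the Fourier basis $\{\theta\circ\mu\circ f\}_{\mu \in \frak{g}^*}$. Starting from the Frobenius induction formula and Fourier-expanding the characteristic function of $U_\eta$ via
\[
    \mathbb{1}_{U_\eta}(h) = \frac{1}{|\frak{u}_\eta^\perp|}\sum_{\mu \in \frak{u}_\eta^\perp}\theta(\mu(f(h))),
\]
a change of variable $\mu \mapsto \mu + \lambda_\eta$ rewrites $\textup{Ind}_{U_\eta}^G(\tilde\psi)(g)$ as
\[
    \frac{1}{|U_\eta||\frak{u}_\eta^\perp|}\sum_{\nu \in \lambda_\eta + \frak{u}_\eta^\perp}\sum_{x \in G}\theta(\nu(x^{-1}f(g)x)),
\]
while by definition $\chi_{[\eta]}(g) = \sum_{\nu:\text{big}(\nu)=\eta}\sum_{\mu \in G\lambda_\nu G}\theta(\mu(f(g)))$. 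Matching the exponentials in these two expressions reduces the lemma to the orbit identification
\[
    \bigcup_{\substack{\nu \in \Pi(n,q)\\\text{big}(\nu)=\eta}} G\lambda_\nu G \;=\; G\cdot(\lambda_\eta + \frak{u}_\eta^\perp)\cdot G,
\]
along with a bookkeeping of multiplicities.

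The main obstacle is precisely this orbit identification. The containment $(\supseteq)$ is direct: if $\text{big}(\nu) = \eta$, then the extra arcs of $\nu$ are transitively nested under arcs of $\eta$, so $\lambda_\nu - \lambda_\eta$ is supported at forbidden positions of $\frak{u}_\eta$ and hence $\lambda_\nu \in \lambda_\eta + \frak{u}_\eta^\perp$. For the reverse containment I would show that any $\mu \in \lambda_\eta + \frak{u}_\eta^\perp$ reduces under row and column operations (the $G \times G$ action on $\frak{g}^*$) to a canonical $\lambda_\nu$ with $\text{big}(\nu) = \eta$; the nonnesting of $\eta$ is essential here, since it ensures that no row or column operation can eliminate an arc of $\eta$, so such arcs persist as the outer arcs of the resulting canonical form. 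The multiplicity count is then forced by the dimension equality $|\frak{u}_\eta^\perp| = [G:U_\eta]$, which matches the degrees of both sides at $g = 1$ and, combined with the orbit identification, pins down the coefficient of each exponential.
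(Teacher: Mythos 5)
Your steps (i) and (ii) are fine, and your route through Frobenius' induction formula plus a Fourier expansion of the indicator function of $U_\eta$ is genuinely different from the paper's, which instead notes that $U_\eta$ is a \emph{normal} pattern subgroup, that $\theta\circ\lambda_\eta\circ f$ is invariant under conjugation by $UT_n(\mathbb{F}_q)$, and then quotes a closed formula for inducing an invariant linear character from a normal subgroup. The gap is in your step (iii), at the ``bookkeeping of multiplicities.'' After your change of variable, the induced character is $\tfrac{1}{|U_\eta||\frak{u}_\eta^\perp|}\sum_{\nu\in\lambda_\eta+\frak{u}_\eta^\perp}\sum_{x\in G}\theta\bigl(\nu(x^{-1}f(g)x)\bigr)$, i.e.\ a linear combination of the exponentials $\theta\circ\mu\circ f$ in which $\mu$ ranges over the \emph{conjugation}-saturation of the coset and the coefficient of $\mu$ is $\tfrac{1}{|G|}\#\{(\nu,x):\nu\in\lambda_\eta+\frak{u}_\eta^\perp,\ x\nu x^{-1}=\mu\}$ (here $|U_\eta||\frak{u}_\eta^\perp|=|G|$). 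Since the functions $\theta\circ\mu\circ f$ for distinct $\mu\in\frak{ut}_n(\mathbb{F}_q)^*$ are linearly independent, equality with $\chi_{[\eta]}=\sum_{\textup{big}(\nu)=\eta}\sum_{\mu\in G\lambda_\nu G}\theta\circ\mu\circ f$ must be verified coefficient by coefficient; knowing only that the two supports have the same two-sided saturation and that the degrees at $g=1$ agree does not force every coefficient to equal $1$. What you actually need is the stronger, unsaturated identity $\lambda_\eta+\frak{u}_\eta^\perp=\bigcup_{\textup{big}(\nu)=\eta}G\lambda_\nu G$, equivalently that the coset is stable under the two-sided (hence the conjugation) action. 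Your containment arguments, as stated, give each $\lambda_\nu$ lying in the coset and each coset element lying in some orbit with $\textup{big}=\eta$, hence only equality after saturating the coset; without stability the coefficient count is not ``forced.''

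The missing ingredient is, fortunately, exactly the computation you gesture at in the reverse containment, applied to the whole orbit rather than only to the canonical form at the end of your reduction: for $u,v\in G$ one has $(u\mu v)(e_{ij})=\sum_{k\preceq i,\ j\preceq l}(u^{-1})_{ki}\,\mu(e_{kl})\,(v^{-1})_{jl}$, so row and column operations can only propagate support to positions lying weakly under an already nonzero position, and (using that $\eta$ is nonnesting and comes from a set partition) they neither change the value at an arc of $\eta$ nor create a nonzero value at an allowed non-arc position of $\frak{u}_\eta$. This shows the coset $\lambda_\eta+\frak{u}_\eta^\perp$ is a union of two-sided orbits --- it is the content of Lemma~\ref{anotherlemma}(2) together with the identification $\{\mu\mid\textup{big}(\mu)=\eta\}=\{\mu\mid\mu|_{\frak{u}_\eta}=\lambda_\eta|_{\frak{u}_\eta}\}$ used in Lemma~\ref{ueta} --- and it simultaneously yields $\textup{big}(\nu)=\eta$ for your canonical representatives. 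Once you have it, your double sum collapses for each fixed $x$ by reindexing $\nu\mapsto x^{-1}\nu x$, giving $\textup{Ind}_{U_\eta}^{UT_n(\mathbb{F}_q)}\textup{Res}(\theta\circ\lambda_\eta\circ f)=\sum_{\mu\in\lambda_\eta+\frak{u}_\eta^\perp}\theta\circ\mu\circ f$, which is the paper's intermediate identity, and the lemma follows. So the skeleton of your argument is sound, but the coset stability must be proved explicitly; it is precisely the role played in the paper by normality of $U_\eta$ and the $UT_n(\mathbb{F}_q)$-invariance of the restricted linear character.
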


\begin{proof} From the definition of $U_\eta$, we have that $U_\eta$ is a normal pattern subgroup of $UT_n(\mathbb{F}_q)$, and furthermore that $\textup{Res}_{U_\eta}^{UT_n(\mathbb{F}_q)}(\theta \circ \lambda_\eta \circ f)$ is invariant under the conjugation action of $UT_n(\mathbb{F}_q)$. By \cite[Lemma~4.5]{andrews1}, we have that
\begin{align*}
        \textup{Ind}_{U_\eta}^{UT_n(\mathbb{F}_q)} \textup{Res}_{U_\eta}^{UT_n(\mathbb{F}_q)}(\theta \circ \lambda_\eta \circ f)(g) & = \left\{\begin{array}{ll}
        \frac{|UT_n(\mathbb{F}_q)|}{|U_\eta|}(\theta \circ \lambda_\eta \circ f)(g) &\quad \text{if } g \in U_\eta, \\
        0 & \quad \text{otherwise,}\end{array}\right. \\
        & = \sum_{\substack{\mu \in \frak{ut}_n(\mathbb{F}_q)^* \\ \mu|_{\frak{u}_\eta} = \lambda_\eta|_{\frak{u}_\eta}}}\theta \circ \mu \circ f,
\end{align*}
where $\mu|_{\frak{u}_\eta}$ denotes the restriction of $\mu$ to $\frak{u}_\eta$. At the same time, we have
\[
         \chi_{[\eta]} = \sum_{\substack{\mu \in \frak{ut}_n(\mathbb{F}_q)^* \\ \mu|_{\frak{u}_\eta} = \lambda_\eta|_{\frak{u}_\eta}}}\theta \circ \mu \circ f.
\]
\end{proof}

As a corollary, we can calculate the dimensions of the characters $\chi_{[\eta]}$.

\begin{corollary}\label{cordimnnsch} Let $\eta$ be a nonnesting $\mathbb{F}_q$-set partition. Then $\chi_{[\eta]}(1) = |U:U_\eta|$, and we have
\begin{align*}
        |U:U_\eta| &= q^{\left|\left\{(i,j) \;\bigg|\; \begin{array}{l}i<j\text{ and there exists }k \overset{a}{\frown}l \in \eta \text{ with } \\(i,j) \neq (k,l) \text{ and } k \leq i<j \leq l\end{array}\right\}\right|}.
\end{align*}
\end{corollary}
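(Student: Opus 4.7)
The plan is to derive both equalities directly from Lemma~\ref{lemchietaind} combined with the pattern-subgroup structure of $U_\eta$. For the first equality, I would use the standard dimension formula for induced characters applied to Lemma~\ref{lemchietaind}: evaluating at the identity gives
\[
    \chi_{[\eta]}(1) = [UT_n(\mathbb{F}_q):U_\eta]\cdot \bigl(\theta \circ \lambda_\eta \circ f\bigr)(1).
\]
Since $f(1) = 1 - 1 = 0$ in $\frak{ut}_n(\mathbb{F}_q)$, $\lambda_\eta(0) = 0$, and $\theta(0) = 1$, the second factor is $1$ and we obtain $\chi_{[\eta]}(1) = |U:U_\eta|$. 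This step is immediate granted the lemma.

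For the explicit formula, I would compute the index by examining the pattern structure of $U_\eta$. Unwinding Equation~\ref{eqtueta}, $U_\eta$ consists of upper-triangular matrices with $1$'s on the diagonal whose strictly upper-triangular entry at position $(i,j)$ is forced to be zero precisely when there exists $k \overset{a}{\frown} l \in \eta$ with $(i,j) \neq (k,l)$ and $k \le i < j \le l$, and whose remaining strictly upper-triangular entries are free in $\mathbb{F}_q$. Letting $N$ denote the count in the statement (the number of positions forced to be $0$) and $M = \binom{n}{2} - N$ the number of free positions, we have $|U_\eta| = q^M$ and $|UT_n(\mathbb{F}_q)| = q^{\binom{n}{2}}$, so $|U:U_\eta| = q^N$, which is exactly the formula claimed.

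There is no substantial obstacle here, since Lemma~\ref{lemchietaind} has already done the main work of identifying $\chi_{[\eta]}$ as an induced character from a pattern subgroup; what remains is a direct count of free versus constrained entries in $U_\eta$. The only point worth emphasizing is that the displayed description of $K_{[\eta]}$ preceding the lemma makes it clear which positions of matrices in the relevant subgroups are free, so the index computation is purely combinatorial.
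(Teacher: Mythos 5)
Your proposal is correct and follows exactly the route the paper intends: the corollary is stated as an immediate consequence of Lemma~\ref{lemchietaind}, with $\chi_{[\eta]}(1) = |U:U_\eta|$ coming from inducing the linear character $\theta\circ\lambda_\eta\circ f$ and the index computed by counting the entries forced to zero in the pattern subgroup $U_\eta$ of Equation~\ref{eqtueta}. The only cosmetic point is that the relevant zero-pattern is read off from the definition of $U_\eta$ itself rather than from the description of $K_{[\eta]}$, but your count is the right one.
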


We now calculate the values of the characters $\chi_{[\eta]}$ on the elements of the sets $K_{[\nu]}$.

\begin{proposition}\label{nnschtable} Let $\eta$ and $\nu$ be nonnesting $\mathbb{F}_q$-set partitions and let $g \in K_{[\nu]}$. Then
\[
        \chi_{[\eta]}(g) = \left\{\begin{array}{ll}
        \chi_{[\eta]}(1)\displaystyle\prod_{\substack{i\overset{a}{\frown}j \in \eta \\i \overset{b}{\frown}j \in \nu}}\theta(ab) & \quad\begin{array}{l}\text{if there are no } i \overset{a}{\frown} j \in \eta \text{ and } k \overset{b}{\frown} l \in \nu \\ \text{with } (i,j) \neq (k,l) \text{and } i\leq k < l \leq j,\end{array} \\
        0 &\quad\text{otherwise.}\end{array}\right.
\]
\end{proposition}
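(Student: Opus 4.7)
The plan is to apply Lemma~\ref{lemchietaind} directly to an arbitrary $g \in K_{[\nu]}$: the lemma rewrites $\chi_{[\eta]}(g)$ as $|UT_n(\mathbb{F}_q):U_\eta|\cdot\theta(\lambda_\eta(g-1))$ when $g \in U_\eta$ and as $0$ otherwise, and Corollary~\ref{cordimnnsch} identifies $|UT_n(\mathbb{F}_q):U_\eta| = \chi_{[\eta]}(1)$. The proof therefore reduces to (i) deciding when an element $g \in K_{[\nu]}$ lies in $U_\eta$, and (ii) evaluating $\lambda_\eta(g-1)$ when it does.

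For the vanishing clause, suppose there exist arcs $i \overset{a}{\frown} j \in \eta$ and $k \overset{b}{\frown} l \in \nu$ with $(i,j) \neq (k,l)$ and $i \leq k < l \leq j$. From the description of $K_{[\nu]}$ we have $g_{kl} = b \neq 0$, but $(k,l)$ lies strictly inside the arc $i \overset{a}{\frown} j$ of $\eta$, so by the defining condition of $U_\eta$ in Equation~\ref{eqtueta}, $g \notin U_\eta$ and $\chi_{[\eta]}(g) = 0$.

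For the nonvanishing clause, assume no such pair of arcs exists. First I verify that $g \in U_\eta$: if $g_{kl} \neq 0$ at a position with $(k,l) \neq (i,j)$ and $i \leq k < l \leq j$ for some $i \overset{a}{\frown} j \in \eta$, then by the $K_{[\nu]}$ description there is an arc $k' \overset{c}{\frown} l' \in \nu$ with $k \leq k' < l' \leq l$; chaining gives $i \leq k' < l' \leq j$, and since $(k,l) \neq (i,j)$ forces either $k>i$ or $l<j$, we also have $(k',l') \neq (i,j)$, contradicting the hypothesis. Then $\lambda_\eta(g-1) = \sum_{i \overset{a}{\frown} j \in \eta} a\,g_{ij}$, and for each arc $(i,j) \in \eta$ the hypothesis forces $g_{ij}$ to be either the $\nu$-label $b$ (when $(i,j)$ is itself an arc of $\nu$) or $0$ (when $(i,j)$ covers no arc of $\nu$), so $\theta(\lambda_\eta(g-1))$ reduces to the claimed product.

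The main subtlety is that the formula must hold uniformly for \emph{every} $g \in K_{[\nu]}$, even though elements of $K_{[\nu]}$ have free entries at positions that merely cover arcs of $\nu$ without themselves being arcs. The transitivity arguments in the previous paragraph are exactly what show that, under the hypothesis, such free entries lie outside both the defining support of $U_\eta$ and the support of $\lambda_\eta$, so they neither affect membership in $U_\eta$ nor enter the character value; this is where the content of the proposition really lies.
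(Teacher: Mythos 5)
Your proposal is correct and follows essentially the same route as the paper's proof: reduce via Lemma~\ref{lemchietaind} (with Corollary~\ref{cordimnnsch} for the index) to deciding when elements of $K_{[\nu]}$ lie in $U_\eta$ and then evaluating $\theta\circ\lambda_\eta\circ f$ there. The paper states the equivalence ``no forbidden pair of arcs $\Leftrightarrow K_{[\nu]}\subseteq U_\eta$'' and the identification $g_{ij}\neq 0 \Rightarrow i\overset{g_{ij}}{\frown}j\in\nu$ without proof; your transitivity arguments simply make those two steps explicit.
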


\begin{proof} There are no $i \overset{a}{\frown} j \in \eta$ and $k \overset{b}{\frown} l \in \nu$ with $(i,j) \neq (k,l)$ and $i\leq k < l \leq j$ exactly when $K_{[\nu]} \subseteq U_\eta$. Assume that $g \in U_\eta$; then if $i \overset{a}{\frown} j \in \eta$ and $g_{ij} \neq 0$, we have that $i \overset{g_{ij}}{\frown} j \in \nu$. The result follows from Lemma~\ref{lemchietaind}.
\end{proof}

We can now show that we have constructed a supercharacter theory of $UT_n(\mathbb{F}_q)$.

\begin{theorem}\label{theoremnnsct} The sets
\begin{align*}
        &\{\chi_{[\eta]} \mid \eta \text{ is a nonnesting }\mathbb{F}_q\text{-set partition}\} \quad \text{and} \\ &\{K_{[\nu]} \mid \nu \text{ is a nonnesting }\mathbb{F}_q\text{-set partition}\}
\end{align*}
are the supercharacters and superclasses for a supercharacter theory of $UT_n(\mathbb{F}_q)$.
\end{theorem}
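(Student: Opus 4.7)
The strategy is to verify the three axioms (SCT1)--(SCT3) of a supercharacter theory directly, in each case reducing to the corresponding statement for the algebra group supercharacter theory of $UT_n(\mathbb{F}_q)$ (Theorem~\ref{alggpsct} and Corollary~\ref{cortypeaschscl}). The construction of the $\chi_{[\eta]}$ and $K_{[\eta]}$ is explicitly a coarsening of that theory, packaged by the equivalence relations on $\Pi(n,q)$ induced by $\mathrm{big}$ and $\mathrm{sml}$, so each axiom should follow by unwinding definitions.

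First I would check that $\mathcal{K} = \{K_{[\eta]} \mid \eta \in \mathrm{NN}(n,q)\}$ is a partition of $UT_n(\mathbb{F}_q)$ into unions of conjugacy classes. By Corollary~\ref{cortypeaschscl} the algebra group superclasses $\{K_\nu \mid \nu \in \Pi(n,q)\}$ are already such a partition, and $K_{[\eta]} = \bigsqcup_{\mathrm{sml}(\nu)=\eta} K_\nu$. Since the fibers of $\mathrm{sml}:\Pi(n,q) \to \mathrm{NN}(n,q)$ partition $\Pi(n,q)$ (as noted immediately before the definition of $K_{[\eta]}$), the $K_{[\eta]}$ form a partition of $UT_n(\mathbb{F}_q)$, each block a union of conjugacy classes. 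For (SCT1), both the supercharacter set and the superclass set are indexed by $\mathrm{NN}(n,q)$, so they have the same cardinality.

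For (SCT2), I would appeal directly to Proposition~\ref{nnschtable}, which asserts that $\chi_{[\eta]}$ takes the constant value prescribed there on every $g \in K_{[\nu]}$. For (SCT3), the key observation is that each irreducible character of $UT_n(\mathbb{F}_q)$ is a constituent of exactly one $\chi_\nu$ by Theorem~\ref{alggpsct}. Since
\[
\chi_{[\eta]} = \sum_{\mathrm{big}(\nu) = \eta} \chi_\nu
\]
and the fibers of $\mathrm{big}:\Pi(n,q) \to \mathrm{NN}(n,q)$ partition $\Pi(n,q)$, each irreducible is a constituent of exactly one $\chi_{[\eta]}$. (That each $\chi_{[\eta]}$ is genuinely a character, not just a virtual character, is automatic because it is a nonnegative integer combination of the characters $\chi_\nu$.)

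The only genuinely substantive piece of input is the combinatorial fact, implicit in the text preceding the construction of $K_{[\eta]}$ and $\chi_{[\eta]}$, that $\mathrm{big}$ and $\mathrm{sml}$ each define equivalence relations on $\Pi(n,q)$ whose equivalence classes are in bijection with $\mathrm{NN}(n,q)$; if this has not been established carefully earlier, I would include a brief verification by noting that for a nonnesting $\eta$, the fiber $\mathrm{sml}^{-1}(\eta)$ (resp. $\mathrm{big}^{-1}(\eta)$) consists precisely of those $\nu$ obtained from $\eta$ by inserting labeled arcs nested strictly inside (resp. nesting strictly outside) the arcs of $\eta$, and $\eta$ itself lies in its own fiber. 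Once this is in place, the rest of the proof is a direct assembly of the facts already proved, and no further obstacle arises.
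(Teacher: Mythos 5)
Your proposal is correct and follows essentially the same route as the paper: (SCT1) from the common indexing by nonnesting $\mathbb{F}_q$-set partitions, (SCT2) from Proposition~\ref{nnschtable}, and (SCT3) from the fact that each algebra group supercharacter (hence each irreducible character) is a constituent of exactly one $\chi_{[\eta]}$. The extra details you supply about the fibers of $\mathrm{sml}$ and $\mathrm{big}$ partitioning $\Pi(n,q)$ are consistent with the remarks the paper makes just before defining $K_{[\eta]}$ and $\chi_{[\eta]}$.
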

\begin{proof} As both sets are indexed by nonnesting $\mathbb{F}_q$-set partitions, we have (SCT1). Proposition~\ref{nnschtable} demonstrates that (SCT2) holds, and (SCT3) follows from the fact that each algebra group supercharacter is a constituent of exactly one $\chi_{[\eta]}$.
\end{proof}

We call this the \emph{nonnesting supercharacter theory} of $UT_n(\mathbb{F}_q)$.

\section{Nonnesting supercharacter theories of pattern groups}\label{secnnp}

Let $I$ be a finite set, and let $\mathcal{P}$ be a poset on $I$. Recall that the pattern group $U_\mathcal{P}$ has a supercharacter theory with superclasses
\[
        K_g = \{h \in U_\mathcal{P} \mid f(h) \in U_\mathcal{P}f(g)U_\mathcal{P}\}
\]
and supercharacters
\[
        \chi_\lambda = \sum_{\mu \in U_\mathcal{P}\lambda U_\mathcal{P}} \theta \circ \mu \circ f.
\]
Unfortunately, there is no known indexing set for the supercharacters and superclasses of an arbitrary pattern group. In \cite[Corollary 4.5]{MR2855781}, Halasi--P\'alfy prove that there exists a poset $\mathcal{P}$ such that $U_\mathcal{P}$ has nilpotency class 2 and the number of conjugacy classes of $U_\mathcal{P}$ cannot be described by finitely many polynomials in $q$. For pattern groups of nilpotency class 2, however, the algebra group supercharacters are just the irreducible characters (this is a consequence of \cite[Corollary 5.1]{MR2491890}). It follows that the number of algebra group supercharacters of $U_\mathcal{P}$ is not in general polynomial in $q$.

\bigbreak

In this section, we present a supercharacter theory of $U_\mathcal{P}$ that is coarser than the algebra group supercharacter theory and is analogous to the nonnesting supercharacter theory of $UT_n(\mathbb{F}_q)$. This supercharacter theory has supercharacters and superclasses indexed by the nonnesting $(\mathbb{F}_q,\mathcal{P})$-set partitions, and as such the number of supercharacters for a fixed poset is polynomial in $q$. The results and proofs in this section are almost identical to those of Section~\ref{secnnutn}; because we do not have a nice indexing set for the algebra group supercharacters and superclasses of $U_\mathcal{P}$, however, we must use a different approach to construct the supercharacters and superclasses.

\bigbreak

For an element $g \in U_\mathcal{P}$, let
\[
        \text{sml}(g) = \bigg\{i\overset{g_{ij}}{\frown}j \;\bigg|\; \begin{array}{l} g_{ij} \neq 0 \text{ and if } (k,l) \neq (i,j) \\ \text{ and } i \preceq_\mathcal{P} k\prec_\mathcal{P}l \preceq_\mathcal{P} j,\text{ then } g_{kl}=0\end{array}\bigg\}.
\]
Similarly, if $\lambda \in \frak{u}_\mathcal{P}^*$, define
\[
        \text{big}(\lambda) = \bigg\{i\overset{\lambda(e_{ij})}{\frown}j \;\bigg|\; \begin{array}{l} \lambda(e_{ij}) \neq 0 \text{ and if } (k,l) \neq (i,j) \\ \text{ and } k \preceq_\mathcal{P} i\prec_\mathcal{P}j \preceq_\mathcal{P} l,\text{ then } \lambda(e_{kl})=0\end{array}\bigg\},
\]
where $e_{ij}\in \frak{gl}_n(\mathbb{F}_q)$ is the matrix with a 1 in the $(i,j)$ entry and zeroes elsewhere.

\bigbreak

\begin{example} Let $\mathcal{P}$ be the poset with Hasse diagram
\[
    \begin{tikzpicture}[baseline={([yshift=-.5ex]current bounding box.center)}]
    \node (e) at (0,2) {$6$};
    \node (a) at (-3,0) {$2$};
    \node (b) at (-1,0) {$3$};
    \node (c) at (1,0) {$4$};
    \node (d) at (3,0) {$5$};
    \node (f) at (0,-2) {$1$};
    \draw (e) -- (a);
    \draw (e) -- (b) -- (f) -- (c) -- (e) -- (d);
\end{tikzpicture};
\]
then

\[
        U_\mathcal{P} = \left\{\left(\begin{array}{llllllllllll}
        1 & 0 & * & * & 0 & * \\
        0 & 1 & 0 & 0 & 0 & * \\
        0 & 0 & 1 & 0 & 0 & * \\
        0 & 0 & 0 & 1 & 0 & * \\
        0 & 0 & 0 & 0 & 1 & * \\
        0 & 0 & 0 & 0 & 0 & 1
        \end{array}\right)\right\}.
\]
For $a,b,c,e \in \mathbb{F}_q$, let
\[
        g = \left(\begin{array}{llllllllllll}
        1 & 0 & a & 0 & 0 & b \\
        0 & 1 & 0 & 0 & 0 & c \\
        0 & 0 & 1 & 0 & 0 & 0 \\
        0 & 0 & 0 & 1 & 0 & e \\
        0 & 0 & 0 & 0 & 1 & 0 \\
        0 & 0 & 0 & 0 & 0 & 1
        \end{array}\right); \quad
\text{then} \quad
    \text{sml}(g) = \begin{tikzpicture}
	\fill (0,0) circle (.1);
    \fill (.5,0) circle (.1);
    \fill (1,0) circle (.1);
    \fill (1.5,0) circle (.1);
    \fill (2,0) circle (.1);
    \fill (2.5,0) circle (.1);
    \draw (0,0) to [out=75, in=105] node[above] {$a$} (1,0);
	\draw (.5,0) to [out=75, in=105] node[above] {$c$} (2.5,0);
    \draw (1.5,0) to [out=25, in=155] node[above] {$e$} (2.5,0);
\end{tikzpicture}.
\]
If we define $\lambda \in \frak{u}_\mathcal{P}^*$ by
\[
        \lambda(x) = ax_{13}+bx_{16}+cx_{26}+ex_{46}, \quad
\text{then} \quad
    \text{big}(\lambda) = \begin{tikzpicture}
	\fill (0,0) circle (.1);
    \fill (.5,0) circle (.1);
    \fill (1,0) circle (.1);
    \fill (1.5,0) circle (.1);
    \fill (2,0) circle (.1);
    \fill (2.5,0) circle (.1);
    \draw (0,0) to [out=75, in=105] node[above] {$b$} (2.5,0);
	\draw (.5,0) to [out=25, in=155] node[above] {$c$} (2.5,0);
\end{tikzpicture}.
\]
\end{example}

Note that $\text{sml}(g)$ and $\text{big}(\lambda)$ are nonnesting $(\mathbb{F}_q,\mathcal{P})$-set partitions for all $g \in U_\mathcal{P}$ and $\lambda \in \frak{u}_\mathcal{P}^*$.

\begin{remark} If $\mathcal{P}$ is the usual linear order on $[n]$ and $g \in U_\mathcal{P}$ is in the algebra group superclass $K_\eta$, then $\text{sml}(g) = \text{sml}(\eta)$ (a similar statement can be made for functionals).
\end{remark}

For nonnesting $(\mathbb{F}_q,\mathcal{P})$-set partitions $\eta$ and $\nu$, define
\[
        K_\nu = \{g \in U_\mathcal{P} \mid \text{sml}(g) = \nu\}
\]
and
\[
        \chi_\eta = \sum_{\substack{\lambda \in \frak{u}_\mathcal{P}^* \\ \text{big}(\lambda) = \eta}} \theta \circ \lambda \circ f.
\]
Analogous to the case of $UT_n(\mathbb{F}_q)$, we have that
\[
        K_{\eta} = \left\{ g \in U_\mathcal{P} \;\left|\;\begin{array}{l} g_{ij} = a \text{ for all } i\overset{a}{\frown} j \in \eta, \text{ and } g_{kl} = 0 \text{ unless}\\
        \text{there exists } i\overset{a}{\frown} j \in \eta \text{ with } k \preceq i \prec j \preceq l\end{array}\right.\right\}.
    \]

\begin{lemma}\label{anotherlemma} We have that
\begin{enumerate}
\item if $g \in U_\mathcal{P}$ and $f(h) \in U_\mathcal{P}f(g)U_\mathcal{P}$, then $\text{sml}(g)=\text{sml}(h)$; and
\item if $\lambda \in \frak{u}_\mathcal{P}^*$ and $\mu \in U_\mathcal{P}\lambda U_\mathcal{P}$, then $\text{big}(\lambda) = \text{big}(\mu)$.
\end{enumerate}
\end{lemma}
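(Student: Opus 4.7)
The two parts are parallel, so I focus on (1); part (2) will fall out of the same bookkeeping after dualizing. The hypothesis $f(h) \in U_\mathcal{P} f(g) U_\mathcal{P}$ unpacks to $f(h) = (1+a) f(g) (1+b)$ for some $a, b \in \frak{u}_\mathcal{P}$, which entry-by-entry reads
\[
h_{ij} = g_{ij} + \sum_k a_{ik} g_{kj} + \sum_l g_{il} b_{lj} + \sum_{k,l} a_{ik} g_{kl} b_{lj},
\]
where the summation indices satisfy $i \prec_\mathcal{P} k$ in the first and third sums and $l \prec_\mathcal{P} j$ in the second and third (the constraints coming from $a, b \in \frak{u}_\mathcal{P}$).

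The plan is to fix $i \overset{c}{\frown} j \in \text{sml}(g)$ and verify (a) $h_{ij} = c$, and (b) $h_{kl} = 0$ for every $(k,l) \neq (i,j)$ with $i \preceq_\mathcal{P} k \prec_\mathcal{P} l \preceq_\mathcal{P} j$. For (a), each of the three extra sums above contributes a factor $g_{pq}$ whose index pair satisfies $i \preceq_\mathcal{P} p \prec_\mathcal{P} q \preceq_\mathcal{P} j$ with $(p,q) \neq (i,j)$ (because either $p \neq i$, forced by $i \prec_\mathcal{P} p$, or $q \neq j$, forced by $q \prec_\mathcal{P} j$); such factors vanish by the defining property of $\text{sml}(g)$, leaving $h_{ij} = g_{ij} = c$. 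For (b), the same reasoning applied to the expansion of $h_{kl}$ shows every $g$-factor is indexed by a pair strictly inside the order interval from $k$ to $l$, hence strictly inside the interval from $i$ to $j$, so vanishes. This yields $\text{sml}(g) \subseteq \text{sml}(h)$; the reverse inclusion comes from running the same argument with $u_1^{-1}, u_2^{-1} \in U_\mathcal{P}$, using $f(g) = u_1^{-1} f(h) u_2^{-1}$.

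For part (2), the contragredient actions give $\mu(x) = \lambda(u_1^{-1} x u_2^{-1})$ for $x \in \frak{u}_\mathcal{P}$, so I expand $u_1^{-1} e_{ij} u_2^{-1}$ with $u_1^{-1} = 1 + a'$, $u_2^{-1} = 1 + b'$ to obtain
\[
u_1^{-1} e_{ij} u_2^{-1} = e_{ij} + \sum_{k \prec_\mathcal{P} i} a'_{ki} e_{kj} + \sum_{j \prec_\mathcal{P} l} b'_{jl} e_{il} + \sum_{k \prec_\mathcal{P} i,\, j \prec_\mathcal{P} l} a'_{ki} b'_{jl} e_{kl}.
\]
Applying $\lambda$ gives an expansion of $\mu(e_{ij})$ in which every extra $\lambda(e_{pq})$-factor is indexed by a pair lying in the enlarged interval $p \preceq_\mathcal{P} i \prec_\mathcal{P} j \preceq_\mathcal{P} q$ with $(p,q) \neq (i,j)$ — precisely the regime in which the defining property of $\text{big}(\lambda)$ forces vanishing. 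The same check applied to $\mu(e_{kl})$ for $(k,l) \neq (i,j)$ with $k \preceq_\mathcal{P} i \prec_\mathcal{P} j \preceq_\mathcal{P} l$ gives $\mu(e_{kl}) = 0$. As before, invertibility of $u_1, u_2$ yields the reverse inclusion, and so $\text{big}(\lambda) = \text{big}(\mu)$.

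The only real obstacle is bookkeeping of the poset inequalities across the four types of terms in each expansion; the definitions of $\text{sml}$ and $\text{big}$ are crafted exactly to absorb them, with the inequalities pointing \emph{inward} in the left/right multiplication case and \emph{outward} in the contragredient case — reflecting the remark following Corollary~\ref{cordimnnsch}. No representation-theoretic input beyond the explicit formulas for the two $U_\mathcal{P} \times U_\mathcal{P}$ actions is required.
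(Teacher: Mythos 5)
Your proposal is correct and follows essentially the same route as the paper: the paper's proof likewise expands the entry $(u\,f(g)\,v)_{ij}$ as a sum of terms $u_{ik}f(g)_{kl}v_{lj}$ over pairs $i \preceq_\mathcal{P} k \prec_\mathcal{P} l \preceq_\mathcal{P} j$, observes that the defining conditions of $\mathrm{sml}(g)$ force all non-leading terms to vanish, and deduces $\mathrm{sml}(g)\subseteq\mathrm{sml}(h)$ with the reverse inclusion by symmetry, treating part (2) as the analogous dual computation. Your version just writes $u=1+a$, $v=1+b$ explicitly and spells out the dual case, which the paper leaves as ``similar.''
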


\begin{proof} We prove (1), and mention that the proof of (2) is similar. Let $x \in \frak{u}_\mathcal{P}$ and $u,v \in U_\mathcal{P}$; then
\[
        (uxv)_{ij} = \sum_{\substack{k,l \text{ such that} \\ i \preceq_\mathcal{P} k\prec_\mathcal{P}l \preceq_\mathcal{P} j}} u_{ik}x_{kl}v_{lj}.
\]
Suppose that $g_{ij} \neq 0$, and that for all $(k,l) \neq (i,j)$ with $i \preceq_\mathcal{P} k\prec_\mathcal{P}l \preceq_\mathcal{P} j$, we have $g_{kl}=0$. If $f(h) \in U_\mathcal{P} f(g) U_\mathcal{P}$, then by the above calculation
\begin{enumerate}
\item $h_{ij} = g_{ij}$ and
\item for all $(k,l) \neq (i,j)$ with $i \preceq_\mathcal{P} k\prec_\mathcal{P}l \preceq_\mathcal{P} j$, we have $h_{kl}=0$.
\end{enumerate}
It follows that $\text{sml}(g) \subseteq \text{sml}(h)$, and the same argument shows that $\text{sml}(h) \subseteq \text{sml}(g)$.
\end{proof}

There is an important corollary of Lemma~\ref{anotherlemma}.

\begin{corollary}\label{importantcorollary}
The sets $K_\nu$ are unions of algebra group superclasses, and the functions $\chi_\eta$ are sums of algebra group supercharacters. In particular, the functions $\chi_\eta$ are characters of $U_\mathcal{P}$.
\end{corollary}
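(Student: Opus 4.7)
The proof will be almost immediate from Lemma~\ref{anotherlemma}; the content of the corollary is really just a bookkeeping consequence of that lemma combined with the definitions. My plan is to treat the two statements in parallel, handling the superclass side via part (1) of the lemma and the supercharacter side via part (2), and then invoke Theorem~\ref{alggpsct} to conclude that each $\chi_\eta$ is a character.

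For the first claim, recall that the algebra group superclass containing $g \in U_\mathcal{P}$ is $K_g = \{h \in U_\mathcal{P} \mid f(h) \in U_\mathcal{P} f(g) U_\mathcal{P}\}$, so the superclasses are precisely the fibers of the map $g \mapsto U_\mathcal{P} f(g) U_\mathcal{P}$. Part (1) of Lemma~\ref{anotherlemma} says exactly that $\text{sml}$ descends to a well-defined function on these fibers, so two elements in the same algebra group superclass have the same value of $\text{sml}$. Therefore the level sets of $\text{sml}$, which by definition are the sets $K_\nu$, are unions of algebra group superclasses.

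For the second claim, the algebra group supercharacter indexed by an orbit $U_\mathcal{P} \lambda U_\mathcal{P}$ is $\chi_\lambda = \sum_{\mu \in U_\mathcal{P} \lambda U_\mathcal{P}} \theta \circ \mu \circ f$, and these orbits partition $\frak{u}_\mathcal{P}^*$. Part (2) of the lemma says $\text{big}$ is constant on each orbit, so the set $\{\lambda \in \frak{u}_\mathcal{P}^* \mid \text{big}(\lambda) = \eta\}$ is a (disjoint) union of orbits $U_\mathcal{P} \lambda_1 U_\mathcal{P}, \ldots, U_\mathcal{P} \lambda_r U_\mathcal{P}$. Splitting the defining sum for $\chi_\eta$ along this partition of orbits yields
\[
        \chi_\eta = \sum_{i=1}^r \sum_{\mu \in U_\mathcal{P} \lambda_i U_\mathcal{P}} \theta \circ \mu \circ f = \sum_{i=1}^r \chi_{\lambda_i},
\]
exhibiting $\chi_\eta$ as a sum of algebra group supercharacters. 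Since each $\chi_{\lambda_i}$ is a character of $U_\mathcal{P}$ by Theorem~\ref{alggpsct}, their sum $\chi_\eta$ is also a character of $U_\mathcal{P}$.

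There is really no obstacle here beyond carefully unpacking the definitions and citing the lemma; the only thing worth being precise about is the observation that the $\text{sml}$ and $\text{big}$ level sets saturate the $U_\mathcal{P} \times U_\mathcal{P}$-orbits, which is exactly the content of Lemma~\ref{anotherlemma}.
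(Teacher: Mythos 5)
Your proof is correct and follows exactly the intended route: the paper states this as an immediate corollary of Lemma~\ref{anotherlemma} without further argument, and your write-up simply makes explicit the observation that $\text{sml}$ and $\text{big}$ are constant on the $U_\mathcal{P}\times U_\mathcal{P}$-orbits, so their level sets saturate the superclasses and the defining sum for $\chi_\eta$ splits into algebra group supercharacters.
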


In order to provide an alternative description of the characters $\chi_\eta$, we define a subgroup
\begin{equation}
        U_\eta = \bigg\{g \in U_\mathcal{P} \;\bigg|\;\begin{array}{l} g_{ij} = 0 \text{ if there exists }k \overset{a}{\frown}l \in \eta \text{ with } \\(i,j) \neq (k,l) \text{ and } k \preceq_\mathcal{P} i\prec_\mathcal{P}j\preceq_\mathcal{P} l\end{array}\bigg\}.
\end{equation}

\begin{lemma}\label{ueta} Let $\eta$ be a nonnesting $(\mathbb{F}_q,\mathcal{P})$-set partition and let $\chi_\eta$ be as above. Then
\[
        \chi_{\eta} = \textup{Ind}_{U_\eta}^{U_\mathcal{P}} \textup{Res}_{U_\eta}^{U_\mathcal{P}}(\theta \circ \lambda \circ f),
\]
where $\lambda \in \frak{u}_\mathcal{P}^*$ is any functional with $\text{big}(\lambda) = \eta$.
\end{lemma}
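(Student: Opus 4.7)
The plan is to mirror the proof of Lemma~\ref{lemchietaind} exactly, with $\mathcal{P}$ replacing the usual linear order on $[n]$, and to rely on the combinatorial properties of nonnesting $(\mathbb{F}_q,\mathcal{P})$-set partitions to identify the two descriptions of $\chi_\eta$.

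First I would verify that $U_\eta$ is a normal pattern subgroup of $U_\mathcal{P}$. Being a pattern subgroup means showing that $\frak{u}_\eta$ is closed under multiplication: if $e_{ij},e_{jm}\in\frak{u}_\eta$, I need $e_{im}\in\frak{u}_\eta$, which I would check by contrapositive --- any arc of $\eta$ strictly containing $(i,m)$ in the $\preceq_\mathcal{P}$ sense would also strictly contain either $(i,j)$ or $(j,m)$. Normality amounts to showing that $\frak{u}_\eta$ is a two-sided ideal of $\frak{u}_\mathcal{P}$, which is a similar pairwise check using the definition of the containment relation.

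Next I would check that the character $\theta\circ\lambda\circ f$ is well-defined on $U_\eta$ independently of the chosen $\lambda$ with $\text{big}(\lambda)=\eta$, because any two such functionals agree on $\frak{u}_\eta$ (the condition $\text{big}(\lambda)=\eta$ pins down $\lambda(e_{ij})$ exactly when $e_{ij}\in\frak{u}_\eta$, and leaves it free otherwise), and that the resulting character is invariant under the conjugation action of $U_\mathcal{P}$, which follows from normality together with the fact that $\lambda$ vanishes on every commutator $[\frak{u}_\mathcal{P},\frak{u}_\eta]$ by the ideal property. Applying \cite[Lemma~4.5]{andrews1} to this setup then gives
\[
\text{Ind}_{U_\eta}^{U_\mathcal{P}}\text{Res}_{U_\eta}^{U_\mathcal{P}}(\theta\circ\lambda\circ f)=\sum_{\substack{\mu\in\frak{u}_\mathcal{P}^*\\ \mu|_{\frak{u}_\eta}=\lambda|_{\frak{u}_\eta}}}\theta\circ\mu\circ f,
\]
exactly as in the $UT_n(\mathbb{F}_q)$ case.

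The main obstacle --- and the step where the nonnesting hypothesis on $\eta$ is essential --- is to identify
\[
\{\mu\in\frak{u}_\mathcal{P}^*\mid\mu|_{\frak{u}_\eta}=\lambda|_{\frak{u}_\eta}\}=\{\mu\in\frak{u}_\mathcal{P}^*\mid\text{big}(\mu)=\eta\},
\]
which immediately gives the conclusion by the definition of $\chi_\eta$. For the forward containment, given $\mu$ agreeing with $\lambda$ on $\frak{u}_\eta$, each $i\overset{a}{\frown}j\in\eta$ automatically satisfies $\mu(e_{ij})=a\neq 0$, and any pair $(k,l)\neq(i,j)$ with $k\preceq_\mathcal{P} i\prec_\mathcal{P} j\preceq_\mathcal{P} l$ must lie in $\frak{u}_\eta$ (this is where nonnesting of $\eta$ is crucial --- otherwise $(k,l)$ could be strictly contained in a second arc of $\eta$, forcing a nesting), so $\mu(e_{kl})=\lambda(e_{kl})=0$, and hence $i\overset{a}{\frown}j\in\text{big}(\mu)$; conversely, any hypothetical extra arc of $\text{big}(\mu)$ either already lies in $\eta$ (if its supporting $e_{ij}$ is in $\frak{u}_\eta$) or is dominated by some arc of $\eta$ (if not), contradicting maximality. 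The reverse containment is a short direct check using the definitions of $\text{big}(\mu)$ and $\frak{u}_\eta$.
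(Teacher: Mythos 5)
Your proposal is correct and takes essentially the same route as the paper: show $U_\eta$ is a normal pattern subgroup of $U_\mathcal{P}$ with $\theta\circ\lambda\circ f$ conjugation-invariant, apply \cite[Lemma~4.5]{andrews1} to write the induced-restricted character as $\sum_{\mu|_{\frak{u}_\eta}=\lambda|_{\frak{u}_\eta}}\theta\circ\mu\circ f$, and then identify $\{\mu \mid \mu|_{\frak{u}_\eta}=\lambda|_{\frak{u}_\eta}\}$ with $\{\mu \mid \text{big}(\mu)=\eta\}$, which is exactly the step where nonnesting is needed and which the paper asserts without proof. The extra details you supply are sound; the only loose phrase is attributing the vanishing of $\lambda$ on $[\frak{u}_\mathcal{P},\frak{u}_\eta]$ to ``the ideal property'' alone (it also uses that $\text{big}(\lambda)=\eta$ forces $\lambda$ to vanish on such product positions), but this is no less justified than the paper's own unproved invariance claim.
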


\begin{proof} From the definition of $U_\eta$, we have that $U_\eta$ is a normal pattern subgroup of $U_\mathcal{P}$, and furthermore that $\theta \circ \lambda \circ f$ is invariant under the conjugation action of $U_\mathcal{P}$. It follows that
\begin{align*}
        \textup{Ind}_{U_\eta}^{U_\mathcal{P}} \textup{Res}_{U_\eta}^{U_\mathcal{P}}(\theta \circ \lambda \circ f)(g) & = \left\{\begin{array}{ll}
        \frac{|U_\mathcal{P}|}{|U_\eta|}(\theta \circ \lambda \circ f)(g) &\quad \text{if } g \in U_\eta, \\
        0 & \quad \text{otherwise,}\end{array}\right. \\
        & = \sum_{\substack{\mu \in \frak{u}_\mathcal{P}^* \\ \mu|_{\frak{u}_\eta} = \lambda|_{\frak{u}_\eta}}}\theta \circ \mu \circ f
\end{align*}
by \cite[Lemma~4.5]{andrews1}. At the same time,
\[
         \chi_{\eta} = \sum_{\substack{\mu \in \frak{u}_\mathcal{P}^* \\ \text{big}(\mu) = \eta}}\theta \circ \mu \circ f,
\]
and
\begin{align*}
        \{\mu \in \frak{u}_\mathcal{P}^* \mid \text{big}(\mu) = \eta\}
        = \{\mu \in \frak{u}_\mathcal{P}^* \mid \mu|_{\frak{u}_\eta} = \lambda|_{\frak{u}_\eta}\}.
\end{align*}
\end{proof}

Lemma~\ref{ueta} allows us to calculate the dimensions of the characters $\chi_\eta$.

\begin{corollary}\label{nnpdim} Let $\eta$ be a nonnesting $(\mathbb{F}_q,\mathcal{P})$-set partition. Then $\chi_\eta(1) = |U^\mathcal{P}:U_\eta|$, and we have
\begin{align*}
        |U^{\mathcal{P}}:U_\eta| &= q^{\left|\left\{(i,j) \;\bigg|\; \begin{array}{l} i\prec_\mathcal{P}j \text{ and there exists }
        k \overset{a}{\frown} l \in \eta \text{ with } \\(i,j) \neq (k,l) \text{ and } k \preceq_\mathcal{P} i\prec_\mathcal{P}j\preceq_\mathcal{P} l \end{array}\right\}\right|}.
\end{align*}
\end{corollary}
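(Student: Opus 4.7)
The plan is to read off the first equality directly from Lemma~\ref{ueta} and then compute the index $|U_\mathcal{P}:U_\eta|$ by counting free matrix entries.

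First, note that $\theta \circ \lambda \circ f$ is a linear (degree $1$) character of $U_\mathcal{P}$, since $\theta : \mathbb{F}_q^+ \to \mathbb{C}^\times$ is a homomorphism, $\lambda \in \frak{u}_\mathcal{P}^*$ is linear, and $f$ is an additive bijection that turns group multiplication in the abelian quotient into addition (more carefully, $\theta \circ \lambda \circ f$ is a linear character of $U_\mathcal{P}$ if and only if the commutator $[U_\mathcal{P},U_\mathcal{P}]$ lies in its kernel; the condition needed here is only that $\theta \circ \lambda \circ f$ restricts to a linear character of $U_\eta$, which is automatic from the hypothesis in Lemma~\ref{ueta} that this restriction is $U_\mathcal{P}$-invariant under conjugation, combined with the normality of $U_\eta$ in $U_\mathcal{P}$). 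Applying Lemma~\ref{ueta} and the standard identity $(\text{Ind}_H^G \psi)(1) = [G:H]\cdot\psi(1)$ for any character $\psi$ of $H$, I obtain immediately
\[
        \chi_\eta(1) = [U_\mathcal{P}:U_\eta] \cdot (\theta\circ\lambda\circ f)(1) = |U_\mathcal{P}:U_\eta|.
\]

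Next I compute the index. Since both $U_\mathcal{P}$ and $U_\eta$ are pattern subgroups of $U_\mathcal{P}$ cut out by fixing certain matrix entries to be zero, their orders are $q$ raised to the number of positions that are free to vary. The group $U_\mathcal{P} = 1 + \frak{u}_\mathcal{P}$ has free positions
\[
        \{(i,j) \mid i \prec_\mathcal{P} j\},
\]
whereas by the defining condition of $U_\eta$, the free positions of $U_\eta$ are precisely those $(i,j)$ with $i \prec_\mathcal{P} j$ such that there is \emph{no} $k \overset{a}{\frown} l \in \eta$ with $(k,l) \neq (i,j)$ and $k \preceq_\mathcal{P} i \prec_\mathcal{P} j \preceq_\mathcal{P} l$. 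Taking the difference of the exponents yields
\[
        |U_\mathcal{P}:U_\eta| = q^{N}, \quad\text{where } N = \left|\left\{(i,j) \;\bigg|\; \begin{array}{l} i\prec_\mathcal{P} j \text{ and there exists } k \overset{a}{\frown}l \in \eta \text{ with }\\ (i,j)\neq(k,l) \text{ and } k\preceq_\mathcal{P} i\prec_\mathcal{P} j \preceq_\mathcal{P} l\end{array}\right\}\right|,
\]
which is the stated formula.

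The only step with any subtlety is the justification that $\theta \circ \lambda \circ f$ really does restrict to a homomorphism (hence degree-one character) on $U_\eta$, but this is forced by the observation used already in the proof of Lemma~\ref{ueta}: elements of $U_\eta$ have their nonzero off-diagonal entries confined to positions $(k,l) \in \eta$, so products in $U_\eta$ involve no nontrivial contribution from the quadratic term $xy$ in $(1+x)(1+y) = 1+x+y+xy$ when evaluated against $\lambda$ (every cross-product entry $x_{ik}y_{kj}$ for $i \overset{a}{\frown} j \in \eta$ forces $(i,k)$ or $(k,j)$ to be a non-arc position dominated by $i \frown j$, hence zero in $U_\eta$). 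Everything else is a direct application of Lemma~\ref{ueta} and the order formula for pattern groups.
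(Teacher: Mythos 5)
Your proof is correct and follows essentially the same route as the paper: the first equality is just Lemma~\ref{ueta} (or its displayed induction formula) evaluated at the identity, and the index formula comes from counting the free matrix positions of $U_\eta$ inside those of $U_\mathcal{P}$, exactly as intended. Your extra check that $\theta\circ\lambda\circ f$ restricts to a linear character of $U_\eta$ is sound (and is what the citation to \cite[Lemma~4.5]{andrews1} packages in the paper), so nothing is missing.
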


We now calculate the values of the characters $\chi_\eta$ on the elements of the sets $K_\nu$.

\begin{proposition}\label{anotherprop} Let $\eta$ and $\nu$ be nonnesting $(\mathbb{F}_q,\mathcal{P})$-set partitions. If $g \in K_\nu$, then
\[
        \chi_\eta(g) = \left\{\begin{array}{ll}
        \chi_\eta(1)\displaystyle\prod_{\substack{i\overset{a}{\frown}j \in \eta \\
        i \overset{b}{\frown}{j} \in \nu}} \theta(ab) & \quad \begin{array}{l} \text{if there are no } i \overset{a}{\frown}j \in \eta \text{ and } k\overset{b}{\frown}l \in \nu \\ \text{with } (i,j) \neq (k,l) \text{ and }
        i \preceq_\mathcal{P} k\prec_\mathcal{P}l\preceq_\mathcal{P} j,\end{array} \\
        0 & \quad \text{otherwise.}\end{array}\right.
\]
\end{proposition}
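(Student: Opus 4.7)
The plan is to follow the strategy used for Proposition~\ref{nnschtable}, now adapted to the poset setting via Lemma~\ref{ueta}. I would fix the specific functional $\lambda_\eta \in \frak{u}_\mathcal{P}^*$ defined by $\lambda_\eta(x) = \sum_{i \overset{a}{\frown} j \in \eta} a x_{ij}$. The first short step is to check that $\textup{big}(\lambda_\eta) = \eta$, so that Lemma~\ref{ueta} applies with this choice; this reduces to verifying that no $\eta$-arc strictly contains another, which follows by splitting into three subcases: two $\eta$-arcs sharing a left or right endpoint are forbidden by axiom~(2) of a $\mathcal{P}$-set partition, while strict containment with four distinct endpoints is forbidden by nonnestedness of $\eta$. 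Combining Lemma~\ref{ueta} with the induction formula in its proof (together with Corollary~\ref{nnpdim} to identify $|U_\mathcal{P}:U_\eta|$ with $\chi_\eta(1)$) then yields
\[
\chi_\eta(g) = \chi_\eta(1)\,\theta\bigl(\lambda_\eta(g-1)\bigr) \quad \text{if } g \in U_\eta,
\]
and $\chi_\eta(g) = 0$ otherwise. So the task reduces to determining which $g \in K_\nu$ lie in $U_\eta$ and computing $\lambda_\eta(g-1)$ when they do.

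For the ``otherwise'' branch of the proposition I would argue directly: if the hypothesis fails, so that some $i \overset{a}{\frown} j \in \eta$ and $k \overset{b}{\frown} l \in \nu$ satisfy $(i,j) \neq (k,l)$ and $i \preceq_\mathcal{P} k \prec_\mathcal{P} l \preceq_\mathcal{P} j$, then for any $g \in K_\nu$ the defining description of $K_\nu$ forces $g_{kl} = b \neq 0$, while the defining description of $U_\eta$ applied with the arc $i \overset{a}{\frown} j$ forces $g_{kl} = 0$. This rules out $g \in U_\eta$, so $\chi_\eta(g) = 0$.

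For the main branch I would assume the hypothesis and let $g \in K_\nu$. The containment $K_\nu \subseteq U_\eta$ is shown by contradiction: if $(i',j')$ is strictly contained in some $\eta$-arc $(k',l')$ and we had $g_{i'j'} \neq 0$, then by the description of $K_\nu$ there must exist a $\nu$-arc $(i'',j'')$ with $i' \preceq_\mathcal{P} i'' \prec_\mathcal{P} j'' \preceq_\mathcal{P} j'$; transitivity of $\preceq_\mathcal{P}$ then places $(i'',j'')$ inside $(k',l')$, and antisymmetry together with $(k',l') \neq (i',j')$ rules out the coincidence $(k',l') = (i'',j'')$, producing an $\eta$-arc that strictly contains a $\nu$-arc and contradicting the hypothesis. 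Running essentially the same argument with $(i',j')$ replaced by an $\eta$-arc $(i,j)$ itself shows that $g_{ij} \neq 0$ can occur only when the $\nu$-arc forced by $K_\nu$ is $(i,j)$ itself, so $(i,j) \in \nu$ and $g_{ij}$ equals its $\nu$-label $b$. Substituting into $\lambda_\eta(g-1) = \sum_{i \overset{a}{\frown} j \in \eta} a g_{ij}$ collapses the sum to $\sum ab$ over arcs common to $\eta$ and $\nu$, and passing through $\theta$ gives the product in the statement.

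The only subtle point is the transitivity-and-antisymmetry step in the main branch: since $\mathcal{P}$ need not be totally ordered, the linear-order computation ``$k' \leq i' \leq i'' < j'' \leq j' \leq l'$'' of Proposition~\ref{nnschtable} has to be replaced by careful chains of $\preceq_\mathcal{P}$, and the inequality $(k',l') \neq (i'',j'')$ must be extracted from $(k',l') \neq (i',j')$ via antisymmetry of $\preceq_\mathcal{P}$ rather than by arithmetic. Once this bookkeeping is handled, the argument mirrors the linear-order proof essentially verbatim.
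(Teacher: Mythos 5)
Your proposal is correct and follows essentially the same route as the paper: reduce to the induced-character formula of Lemma~\ref{ueta} (with $\chi_\eta(1)=|U_\mathcal{P}:U_\eta|$ from Corollary~\ref{nnpdim}), show the stated condition is exactly when $K_\nu \subseteq U_\eta$, and evaluate $\theta\circ\lambda_\eta\circ f$ on such $g$ to get the product over common arcs. Your added verifications (that $\textup{big}(\lambda_\eta)=\eta$, and the antisymmetry/transitivity bookkeeping replacing the linear-order inequalities) are details the paper leaves implicit, not a different argument.
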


\begin{proof} There will be no $i \overset{a}{\frown}j \in \eta$ and $k\overset{b}{\frown}l \in \nu$ with $(i,j) \neq (k,l)$ and $i \preceq_\mathcal{P} k\prec_\mathcal{P}l\preceq_\mathcal{P} j$ exactly when $K_\nu \subseteq U_\eta$. Note that if $g \in U_\eta$, $i \overset{a}{\frown}j \in \eta$, and $g_{ij} \neq 0 $, then $i \overset{g_{ij}}{\frown} j \in \text{sml}(g)$; as $g \in K_\nu$, we have $i \overset{g_{ij}}{\frown} j \in \nu$. Furthermore, if $\text{big}(\lambda) = \eta$ and $g \in U_\eta$, we have
\[
        (\theta \circ \lambda\circ f)(g) = \prod_{i \overset{a}{\frown}j \in \eta} \theta(ag_{ij}).
\]
The result follows from Lemma~\ref{ueta}.
\end{proof}

We can now prove that we have constructed a supercharacter theory of $U_\mathcal{P}$.

\begin{theorem}\label{nntheorem2}\label{theoremnnposetsct} The sets
\begin{align*}
        &\{K_\nu\mid \nu \text{ is a nonnesting } (\mathbb{F}_q,\mathcal{P})\text{-set partition}\} \text{ and} \\
        &\{\chi_\eta\mid \eta \text{ is a nonnesting } (\mathbb{F}_q,\mathcal{P})\text{-set partition}\}
\end{align*}
\noindent
are the superclasses and supercharacters for a supercharacter theory of $U_\mathcal{P}$.
\end{theorem}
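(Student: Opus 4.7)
The plan is to verify the three supercharacter theory axioms (SCT1), (SCT2), (SCT3) by leveraging the results already established in the section, in direct analogy with the proof of Theorem~\ref{theoremnnsct}. Before checking the axioms I need to confirm two structural facts: that $\{K_\nu\}$ is a partition of $U_\mathcal{P}$ into unions of conjugacy classes, and that each $\chi_\eta$ is actually a character of $U_\mathcal{P}$. The first follows because every $g \in U_\mathcal{P}$ determines a unique nonnesting $(\mathbb{F}_q,\mathcal{P})$-set partition $\text{sml}(g)$, so the $K_\nu$ partition $U_\mathcal{P}$; that each $K_\nu$ is a union of conjugacy classes is immediate from Corollary~\ref{importantcorollary}, which tells us $K_\nu$ is a union of algebra group superclasses (and algebra group superclasses are themselves unions of conjugacy classes by Theorem~\ref{alggpsct}). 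That each $\chi_\eta$ is a character is also part of Corollary~\ref{importantcorollary}.

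For (SCT1), both the superclasses and the supercharacters are indexed by nonnesting $(\mathbb{F}_q,\mathcal{P})$-set partitions, and distinct $\nu$ give distinct $K_\nu$ (and distinct $\eta$ give distinct $\chi_\eta$, since $\chi_\eta$ is a sum over functionals $\lambda$ with $\text{big}(\lambda) = \eta$, and these fibers are disjoint). For (SCT2), Proposition~\ref{anotherprop} shows that $\chi_\eta(g)$ depends only on $\eta$ and on $\nu = \text{sml}(g)$, so $\chi_\eta$ is constant on each $K_\nu$.

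The key step is (SCT3). By Corollary~\ref{importantcorollary} and Lemma~\ref{anotherlemma}(2), each algebra group supercharacter $\chi_\lambda$ (in the sense of Theorem~\ref{alggpsct}) is a constituent of the single $\chi_\eta$ with $\eta = \text{big}(\lambda)$, since on the $U_\mathcal{P}$-orbit $U_\mathcal{P} \lambda U_\mathcal{P}$ the function $\text{big}$ is constant. Conversely, the summands $\theta\circ\mu\circ f$ appearing in the definition of $\chi_\eta$ partition into $U_\mathcal{P} \times U_\mathcal{P}$-orbits by Lemma~\ref{anotherlemma}(2), so $\chi_\eta$ is exactly the sum of the algebra group supercharacters $\chi_\lambda$ with $\text{big}(\lambda) = \eta$. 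Since each irreducible character of $U_\mathcal{P}$ is a constituent of exactly one algebra group supercharacter (Theorem~\ref{alggpsct}), it follows that each irreducible character is a constituent of exactly one $\chi_\eta$.

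The only step that requires real care is the verification that the map $\lambda \mapsto \text{big}(\lambda)$ is constant on $U_\mathcal{P} \times U_\mathcal{P}$-orbits in $\frak{u}_\mathcal{P}^*$, but this is exactly Lemma~\ref{anotherlemma}(2). Everything else is bookkeeping, so the theorem follows by combining Corollary~\ref{importantcorollary}, Proposition~\ref{anotherprop}, and Theorem~\ref{alggpsct}.
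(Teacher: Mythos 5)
Your proof is correct and follows essentially the same route as the paper: (SCT1) from the common indexing by nonnesting $(\mathbb{F}_q,\mathcal{P})$-set partitions, (SCT2) from Proposition~\ref{anotherprop}, and (SCT3) from Corollary~\ref{importantcorollary} together with the algebra group supercharacter theory of Theorem~\ref{alggpsct}. The extra detail you supply for (SCT3) --- that $\mathrm{big}$ is constant on $U_\mathcal{P}\times U_\mathcal{P}$-orbits by Lemma~\ref{anotherlemma}(2), so each algebra group supercharacter lies in exactly one $\chi_\eta$ --- is exactly what the paper leaves implicit in citing that corollary.
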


\begin{proof} As both sets are indexed by the nonnesting $(\mathbb{F}_q,\mathcal{P})$-set partitions, we have (SCT1). Proposition~\ref{anotherprop} gives us (SCT2), and (SCT3) follows from Corollary~\ref{importantcorollary}.
\end{proof}

\begin{remark} If $\mathcal{P}$ is the usual linear order on $[n]$, then $U_\mathcal{P} = UT_n(\mathbb{F}_q)$. In this case, we recover Theorem~\ref{theoremnnsct} from Theorem~\ref{nntheorem2}.
\end{remark}

\section{The Hopf monoid on complex-valued functions of pattern groups}\label{sechmpg}

In \cite{MR3117506}, Aguiar--Bergeron--Thiem construct a Hopf monoid from functions on the groups of unitriangular matrices with entries in $\mathbb{F}_q$. In this section we generalize their construction to pattern groups and show that the nonnesting supercharacter theory defines a combinatorial Hopf monoid.

\subsection{Construction}

Our construction in this section mirrors that of \cite{MR3117506} for unitriangular matrices.

\bigbreak

There is a contravariant functor
\[
        \mathbf{f}:\mathbf{Set} \to \mathbf{Vec}
\]
from the category of sets to the category of complex vector spaces. This functor maps a set $X$ to the space of functions from $X$ to $\mathbb{C}$. There is a canonical isomorphism
\begin{align*}
        \varphi: \mathbf{f}(X \times Y) &\to \mathbf{f}(X) \otimes \mathbf{f}(Y) \\
        (\alpha,\beta) & \mapsto \alpha\otimes\beta.
\end{align*}

If $U_\mathcal{P}$ is a pattern group, then $\mathbf{f}(U_\mathcal{P})$ is the space of functions from $U_\mathcal{P}$ to $\mathbb{C}$. We use this to define a vector species as follows. For a finite set $I$, define
\[
        \mathbf{fp}[I] = \bigoplus_{\substack{ \mathcal{P} \text{ is a}\\ \text{poset on } I}} \mathbf{f}(U_\mathcal{P}).
\]
Any bijection $\sigma: I \to J$ induces an isomorphism $ U_{\sigma\mathcal{P}}\cong U_\mathcal{P}$, which by functoriality gives an isomorphism $\mathbf{f}(U_{\mathcal{P}}) \cong \mathbf{f}(U_{\sigma\mathcal{P}})$. This means that $\mathbf{fp}$ is in fact a vector species.

\bigbreak

Suppose that $S$ and $T$ are disjoint finite sets and that $\mathcal{P}$ and $\mathcal{Q}$ are posets on $S$ and $T$, respectively. There is a homomorphism
\begin{align*}
        \pi_{\mathcal{P},\mathcal{Q}}: U_{\mathcal{P}\cdot\mathcal{Q}} &\to U_{\mathcal{P}} \times U_{\mathcal{Q}} \\
        g & \mapsto (g_S,g_T),
\end{align*}
where $(g_S)_{ij} = g_{ij}$ for all $i,j \in S$.

\bigbreak

We use this homomorphism to define a product on $\mathbf{fp}$. Given posets $\mathcal{P}$ and $\mathcal{Q}$ on disjoint finite sets $S$ and $T$, define
\begin{align*}
        \mu_{\mathcal{P},\mathcal{Q}}:\mathbf{f}(U_\mathcal{P})\otimes\mathbf{f}(U_\mathcal{Q}) &\to \mathbf{f}(U_{\mathcal{P}\cdot\mathcal{Q}}) \\
        \alpha \otimes \beta &\mapsto (\alpha,\beta) \circ \pi_{\mathcal{P},\mathcal{Q}}.
\end{align*}
In other words, $\mu_{\mathcal{P},\mathcal{Q}} = \mathbf{f}(\pi_{\mathcal{P},\mathcal{Q}})\circ \varphi^{-1}$. Extending by linearity over all posets on $S$ and $T$, we get a map
\[
        \mu_{S,T}:\mathbf{fp}[S]\otimes\mathbf{fp}[T] \to \mathbf{fp}[S \sqcup T].
\]
\begin{remark} The map $\mu_{\mathcal{P},\mathcal{Q}}$ is the inflation map with respect to the projection $\pi$. That is,
\[
        \mu_{\mathcal{P},\mathcal{Q}} = \text{Inf}_{U_\mathcal{P} \times U_\mathcal{Q}}^{U_{\mathcal{P}\cdot\mathcal{Q}}}\circ\varphi^{-1}.
\]
\end{remark}

We can also use a homomorphism to define the coproduct. Suppose that $S$ and $T$ are disjoint and that $\mathcal{P}$ is a poset on $S \sqcup T$. Define a homomorphism
\[
        \sigma_{\mathcal{P}|_S,\mathcal{P}|_T}:U_{\mathcal{P}|_S} \times U_{\mathcal{P}|_T} \to U_{\mathcal{P}}
\]
by
\[
        (\sigma_{\mathcal{P}|_S,\mathcal{P}|_T}(g,h))_{i,j} = \left\{\begin{array}{ll}
        g_{ij} & \quad \text{if } i,j \in S, \\
        h_{ij} & \quad \text{if }i,j \in T, \\
        0 & \quad \text{otherwise.} \end{array}\right.
\]
Let
\begin{align*}
        \Delta_{\mathcal{P}|_S,\mathcal{P}|_T}: \mathbf{f}(U_{\mathcal{P}}) &\to \mathbf{f}(U_{\mathcal{P}_S})\otimes \mathbf{f}(U_{\mathcal{P}_T}) \\
        \alpha & \mapsto \varphi(\alpha \circ \sigma_{\mathcal{P}|_S,\mathcal{P}|_T});
\end{align*}
in other words, $\Delta_{\mathcal{P}|_S,\mathcal{P}|_T} = \varphi \circ \mathbf{f}(\sigma_{\mathcal{P}|_S,\mathcal{P}|_T})$. Extending by linearity over all posets on $S\sqcup T$, we obtain a map
\[
        \Delta_{S,T}: \mathbf{fp}[S \sqcup T] \to \mathbf{fp}[S]\otimes \mathbf{fp}[T].
\]
\begin{remark} The map $\Delta_{\mathcal{P}|_S,\mathcal{P}|_T}$ is the restriction map; that is,
\[
        \Delta_{\mathcal{P}|_S,\mathcal{P}|_T} = \varphi\circ \text{Res}_{U_{\mathcal{P}_S}\times U_{\mathcal{P}_T}}^{U_\mathcal{P}}.
\]
\end{remark}

\begin{theorem} Let $\mu$ and $\Delta$ be the collections of all $\mu_{S,T}$ and $\Delta_{S,T}$; then the triple $(\mathbf{fp},\mu,\Delta)$ is a connected, cocommutative Hopf monoid.
\end{theorem}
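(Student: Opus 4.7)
The plan is to reduce every Hopf monoid axiom to a commuting diagram of group homomorphisms and then apply the contravariant functor $\mathbf{f}$. Since $\mu_{\mathcal{P},\mathcal{Q}} = \mathbf{f}(\pi_{\mathcal{P},\mathcal{Q}}) \circ \varphi^{-1}$ is pullback along the projection $\pi_{\mathcal{P},\mathcal{Q}}:U_{\mathcal{P}\cdot\mathcal{Q}} \to U_\mathcal{P} \times U_\mathcal{Q}$ and $\Delta_{\mathcal{P}|_S,\mathcal{P}|_T} = \varphi \circ \mathbf{f}(\sigma_{\mathcal{P}|_S,\mathcal{P}|_T})$ is pullback along the block inclusion $\sigma_{\mathcal{P}|_S,\mathcal{P}|_T}:U_{\mathcal{P}|_S} \times U_{\mathcal{P}|_T} \to U_\mathcal{P}$, every axiom will follow automatically from its group-level analogue.

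The routine axioms can be dispatched quickly. Connectedness holds because the only poset on $\emptyset$ is the empty poset with $U_\emptyset = \{1\}$, giving $\mathbf{fp}[\emptyset] = \mathbb{C}$. Functoriality of $\mu$ and $\Delta$ (diagrams \eqref{functorial1} and \eqref{functorial2}) holds because any bijection $S \to S'$ merely relabels matrix coordinates while both $\pi$ and the block inclusion are defined coordinatewise. Associativity \eqref{associative} reduces to the combinatorial identity $(\mathcal{P}\cdot\mathcal{Q})\cdot\mathcal{R} = \mathcal{P}\cdot(\mathcal{Q}\cdot\mathcal{R})$ together with the fact that both iterated projections of $U_{\mathcal{P}\cdot\mathcal{Q}\cdot\mathcal{R}}$ agree with the threefold projection onto $U_\mathcal{P} \times U_\mathcal{Q} \times U_\mathcal{R}$; coassociativity \eqref{coassociative} is the dual statement using $(\mathcal{P}|_{R\sqcup S})|_R = \mathcal{P}|_R$ and the threefold block inclusion. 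Cocommutativity is immediate from $\sigma_{\mathcal{P}|_S,\mathcal{P}|_T}(g,h) = \sigma_{\mathcal{P}|_T,\mathcal{P}|_S}(h,g)$, since both expressions assemble the same block-diagonal matrix in $U_\mathcal{P}$.

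The main obstacle is the compatibility axiom \eqref{hopfcompatible}. Fix posets $\mathcal{P}_1,\mathcal{P}_2$ on $S_1,S_2$ along with the decompositions $S_1 = A\sqcup B$, $S_2 = C\sqcup D$, $T_1 = A\sqcup C$, $T_2 = B\sqcup D$. The key combinatorial observation is that
\[
(\mathcal{P}_1\cdot\mathcal{P}_2)|_{T_1} = \mathcal{P}_1|_A \cdot \mathcal{P}_2|_C
\quad\text{and}\quad
(\mathcal{P}_1\cdot\mathcal{P}_2)|_{T_2} = \mathcal{P}_1|_B \cdot \mathcal{P}_2|_D,
\]
since in $\mathcal{P}_1\cdot\mathcal{P}_2$ every element of $S_1$ precedes every element of $S_2$, a property preserved under restriction. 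I would then trace a pair $(h_1,h_2) \in U_{\mathcal{P}_1|_A\cdot\mathcal{P}_2|_C} \times U_{\mathcal{P}_1|_B\cdot\mathcal{P}_2|_D}$ around the induced square of group maps: the first route assembles $(h_1,h_2)$ into a block-diagonal matrix on $I$ via $\sigma$ and then projects via $\pi$ to $U_{\mathcal{P}_1} \times U_{\mathcal{P}_2}$, while the second route first decomposes $h_1$ and $h_2$ into their $A,B,C,D$ components via $\pi \times \pi$, swaps the middle factors, and reassembles along $S_1, S_2$ via $\sigma \times \sigma$. Both paths place the same matrix entries into the same $I \times I$ coordinate positions (the key point being that, in the block-diagonal assembly, the mixed $A \times D$ and $B \times C$ entries vanish, which is precisely what makes the $\pi\times\pi$ route produce the same output), so the group-level square commutes on the nose; applying $\mathbf{f}$ yields \eqref{hopfcompatible} and finishes the proof.
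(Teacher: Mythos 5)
Your proof is correct and takes essentially the same route as the paper: reduce each axiom, via functoriality of $\mathbf{f}$, to a commuting square of group homomorphisms, and verify the compatibility square by tracing an element of $U_{\mathcal{P}|_{T_1}}\times U_{\mathcal{P}|_{T_2}}$ along both paths and comparing matrix entries (your identity $(\mathcal{P}_1\cdot\mathcal{P}_2)|_{T_1}=\mathcal{P}_1|_A\cdot\mathcal{P}_2|_C$ is used implicitly in the paper's diagram and is worth stating explicitly, as you do). One small slip in your parenthetical: the entries whose vanishing makes the two routes agree are those in the mixed blocks $A\times B$ and $C\times D$, which lie inside $S_1\times S_1$ and $S_2\times S_2$ and hence survive the projection $\pi_{\mathcal{P}_1,\mathcal{P}_2}$, whereas the $A\times D$ and $B\times C$ entries are discarded by that projection regardless; this does not affect the entrywise verification you describe.
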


\begin{proof} We will only prove Condition~\ref{hopfcompatible} as the proofs of the other conditions are analogous. Let $(S_1,S_2)$ and $(T_1,T_2)$ be pairs of disjoint finite sets such that $S_1\sqcup S_2 = T_1 \sqcup T_2 =I$. Let
\[
        A = S_1 \cap T_1, \quad B = S_1\cap T_2, \quad C = S_2 \cap T_1, \text{ and } D = S_2 \cap T_2.
\]
Suppose that $\mathcal{P}_1$ and $\mathcal{P}_2$ are posets on $S_1$ and $S_2$, and $\mathcal{P} = \mathcal{P}_1\cdot\mathcal{P}_2$. We need to show that the diagram
\begin{equation*}
\begin{tikzpicture}[font=\tiny]
  \matrix (m) [matrix of math nodes,row sep=3em,column sep=0em,minimum width=2em]
  {
     \mathbf{f}[U_{\mathcal{P}_1}]\otimes\mathbf{f}[U_{\mathcal{P}_2}] & \mathbf{f}[U_{\mathcal{P}}] & \mathbf{f}[U_{\mathcal{P}|_{T_1}}] \otimes \mathbf{f}[U_{\mathcal{P}|_{T_2}}] \\
     \mathbf{f}[U_{\mathcal{P}_1|_A}]\otimes\mathbf{f}[U_{\mathcal{P}_1|_B}]\otimes \mathbf{f}[U_{\mathcal{P}_2|_C}] \otimes\mathbf{f}[U_{\mathcal{P}_2|_D}]  & {} & \mathbf{f}[U_{\mathcal{P}_1|_A}]\otimes \mathbf{f}[U_{\mathcal{P}_2|_C}] \otimes \mathbf{f}[U_{\mathcal{P}_1|_B}]\otimes \mathbf{f}[U_{\mathcal{P}_2|_D}] \\};
  \path[-stealth]
    (m-1-1) edge node [above] {$\mu_{\mathcal{P}_1,\mathcal{P}_2}$} (m-1-2)
    (m-1-2) edge node [above] {$\Delta_{\mathcal{P}|_{T_1},\mathcal{P}|_{T_2}}$} (m-1-3)
    (m-2-1) edge node [above] {$\cong$} (m-2-3)
    (m-1-1) edge node [fill=white] {$\Delta_{\mathcal{P}_1|_A,\mathcal{P}_1|_B} \otimes \\ \Delta_{\mathcal{P}_2|_C,\mathcal{P}_2|_D}$} (m-2-1)
    (m-2-3) edge node [fill=white] {$\mu_{\mathcal{P}_1|_A,\mathcal{P}_2|_C} \otimes\mu_{\mathcal{P}_1|_B,\mathcal{P}_2|_D}$} (m-1-3);
    \end{tikzpicture}
\end{equation*}
commutes. By functoriality, it is enough to show that the diagram
\begin{equation*}
\begin{tikzpicture}[font=\tiny]
  \matrix (m) [matrix of math nodes,row sep=3em,column sep=1em,minimum width=2em]
  {
     U_{\mathcal{P}_1}\times U_{\mathcal{P}_2} & U_{\mathcal{P}} & U_{\mathcal{P}|_{T_1}} \times U_{\mathcal{P}|_{T_2}} \\
     U_{\mathcal{P}_1|_A}\times U_{\mathcal{P}_1|_B} \times U_{\mathcal{P}_2|_C} \times U_{\mathcal{P}_2|_D}  & {} & U_{\mathcal{P}_1|_A}\times U_{\mathcal{P}_2|_C} \times U_{\mathcal{P}_1|_B} \times U_{\mathcal{P}_2|_D} \\};
  \path[-stealth]
    (m-1-2) edge node [above] {$\pi_{\mathcal{P}_1,\mathcal{P}_2}$} (m-1-1)
    (m-1-3) edge node [above] {$\sigma_{\mathcal{P}|_{T_1},\mathcal{P}|_{T_2}}$} (m-1-2)
    (m-2-3) edge node [below] {$\cong$} (m-2-1)
    (m-2-1) edge node [fill=white] {$\sigma_{\mathcal{P}_1|_A,\mathcal{P}_1|_B} \times\sigma_{\mathcal{P}_2|_C,\mathcal{P}_2|_D}$} (m-1-1)
    (m-1-3) edge node [fill=white] {$\pi_{\mathcal{P}_1|_A,\mathcal{P}_2|_C} \times\pi_{\mathcal{P}_1|_B,\mathcal{P}_2|_D}$} (m-2-3);
    \end{tikzpicture}
\end{equation*}
commutes. Suppose that $(g,h) \in U_{\mathcal{P}|_{T_1}} \times U_{\mathcal{P}|_{T_2}}$; then
\[
        \pi_{\mathcal{P}_1,\mathcal{P}_2}\circ \sigma_{\mathcal{P}|_{T_1},\mathcal{P}|_{T_2}}(g,h) = (x,y),
\]
where
\[
        (x)_{ij} = \left\{\begin{array}{ll}
        g_{ij} & \quad \text{if } i,j \in S_1 \cap T_1, \\
        h_{ij} & \quad \text{if } i,j \in S_1 \cap T_2, \\
        0 & \quad \text{otherwise,}\end{array}\right.
\]
and
\[
        (y)_{ij} = \left\{\begin{array}{ll}
        g_{ij} & \quad \text{if } i,j \in S_2 \cap T_1, \\
        h_{ij} & \quad \text{if } i,j \in S_2 \cap T_2, \\
        0 & \quad \text{otherwise.}\end{array}\right.
\]
At the same time, we have
\[
       (\sigma_{\mathcal{P}_1|_A,\mathcal{P}_1|_B} \times\sigma_{\mathcal{P}_2|_C,\mathcal{P}_2|_D}) \circ \theta \circ (\pi_{\mathcal{P}_1|_A,\mathcal{P}_2|_C} \times\pi_{\mathcal{P}_1|_B,\mathcal{P}_2|_D})(g,h) = (u,v),
\]
where $\theta$ is the isomorphism that switches the middle factors, and $u$ and $v$ are given by
\[
        (u)_{ij} = \left\{\begin{array}{ll}
        g_{ij} & \quad \text{if } i,j \in A, \\
        h_{ij} & \quad \text{if } i,j \in B, \\
        0 & \quad \text{otherwise,}\end{array}\right.
\]
and
\[
        (v)_{ij} = \left\{\begin{array}{ll}
        g_{ij} & \quad \text{if } i,j \in C, \\
        h_{ij} & \quad \text{if } i,j \in D, \\
        0 & \quad \text{otherwise,}\end{array}\right.
\]
thus $(x,y) = (u,v)$.
\end{proof}

\subsection{Submonoids}

For a poset $\mathcal{P}$ on $I$, define
\begin{align*}
    \mathbf{cf}(U_\mathcal{P}) & = \{\text{class functions on }U_\mathcal{P}\}, \\
    \mathbf{scf}(U_\mathcal{P}) & = \bigg\{\begin{array}{l}\text{superclass functions on }U_\mathcal{P}\text{ in the} \\\text{algebra group supercharacter theory}\end{array} \bigg\}, \text{ and} \\
    \mathbf{nnf}(U_\mathcal{P}) & = \bigg\{\begin{array}{l}\text{superclass functions on }U_\mathcal{P}\text{ in the} \\\text{nonnesting supercharacter theory}\end{array} \bigg\}. \\
\end{align*}
Note that
\[
        \mathbf{nnf}(U_\mathcal{P})\subseteq \mathbf{scf}(U_\mathcal{P})\subseteq \mathbf{cf}(U_\mathcal{P})\subseteq\mathbf{f}(U_\mathcal{P}).
\]
Define vector species $\mathbf{cfp}$, $\mathbf{scfp}$, and $\mathbf{nnfp}$ by
\begin{align*}
    \mathbf{cfp}[I] & = \bigoplus_{\substack{ \mathcal{P} \text{ is a}\\ \text{poset on } I}} \mathbf{cf}(U_\mathcal{P}), \\
    \mathbf{scfp}[I] & = \bigoplus_{\substack{ \mathcal{P} \text{ is a}\\ \text{poset on } I}} \mathbf{scf}(U_\mathcal{P}), \text{ and} \\
    \mathbf{nnfp}[I] & = \bigoplus_{\substack{ \mathcal{P} \text{ is a}\\ \text{poset on } I}} \mathbf{nnf}(U_\mathcal{P}).
\end{align*}
These vector species are all subspecies of $\mathbf{fp}$, and in fact define submonoids of $(\mathbf{fp},\mu,\Delta)$.

\begin{proposition} The triple $(\mathbf{cfp},\mu,\Delta)$ defines a Hopf submonoid of $(\mathbf{fp},\mu,\Delta)$.
\end{proposition}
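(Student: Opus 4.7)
The plan is to verify that the submonoid conditions hold, namely that $\mu_{S,T}$ restricts to a map $\mathbf{cfp}[S]\otimes\mathbf{cfp}[T]\to\mathbf{cfp}[S\sqcup T]$ and that $\Delta_{S,T}$ restricts to a map $\mathbf{cfp}[S\sqcup T]\to\mathbf{cfp}[S]\otimes\mathbf{cfp}[T]$. Once these two inclusions are established, all the Hopf monoid axioms (associativity, coassociativity, the compatibility diagram~\eqref{hopfcompatible}, and the functoriality diagrams~\eqref{functorial1} and \eqref{functorial2}) are inherited from the ambient Hopf monoid $(\mathbf{fp},\mu,\Delta)$, so nothing new has to be checked there.

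First I would record two general facts. (i) If $\phi:G\to H$ is a homomorphism of finite groups, then the pullback $\mathbf{f}(\phi):\mathbf{f}(H)\to\mathbf{f}(G)$ sends $\mathbf{cf}(H)$ into $\mathbf{cf}(G)$, since $\phi$ sends $G$-conjugate pairs to $H$-conjugate pairs. (ii) The canonical isomorphism $\varphi:\mathbf{f}(X\times Y)\to \mathbf{f}(X)\otimes\mathbf{f}(Y)$ restricts to an isomorphism $\mathbf{cf}(X\times Y)\cong \mathbf{cf}(X)\otimes \mathbf{cf}(Y)$; indeed, the irreducible characters of $X\times Y$ are exactly the external tensor products $\chi\boxtimes\psi$ of irreducibles of $X$ and $Y$, and $\varphi$ carries each $\chi\boxtimes\psi$ to $\chi\otimes\psi$, identifying bases on the two sides.

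With these in hand, the product case is immediate. For posets $\mathcal{P}$ on $S$ and $\mathcal{Q}$ on $T$ and for $\alpha\in\mathbf{cf}(U_\mathcal{P})$, $\beta\in\mathbf{cf}(U_\mathcal{Q})$, fact (ii) says that $\varphi^{-1}(\alpha\otimes\beta)$ lies in $\mathbf{cf}(U_\mathcal{P}\times U_\mathcal{Q})$, and then fact (i) applied to the group homomorphism $\pi_{\mathcal{P},\mathcal{Q}}:U_{\mathcal{P}\cdot\mathcal{Q}}\to U_\mathcal{P}\times U_\mathcal{Q}$ (which was already established in the preceding section) shows that $\mu_{\mathcal{P},\mathcal{Q}}(\alpha\otimes\beta)=\mathbf{f}(\pi_{\mathcal{P},\mathcal{Q}})\circ\varphi^{-1}(\alpha\otimes\beta)$ belongs to $\mathbf{cf}(U_{\mathcal{P}\cdot\mathcal{Q}})$. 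Summing over posets gives $\mu_{S,T}(\mathbf{cfp}[S]\otimes \mathbf{cfp}[T])\subseteq \mathbf{cfp}[S\sqcup T]$.

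The coproduct case is dual. For a poset $\mathcal{P}$ on $S\sqcup T$ and $\alpha\in\mathbf{cf}(U_\mathcal{P})$, fact (i) applied to $\sigma_{\mathcal{P}|_S,\mathcal{P}|_T}:U_{\mathcal{P}|_S}\times U_{\mathcal{P}|_T}\to U_\mathcal{P}$ shows that $\alpha\circ\sigma_{\mathcal{P}|_S,\mathcal{P}|_T}\in\mathbf{cf}(U_{\mathcal{P}|_S}\times U_{\mathcal{P}|_T})$, and fact (ii) then places $\Delta_{\mathcal{P}|_S,\mathcal{P}|_T}(\alpha)=\varphi(\alpha\circ\sigma_{\mathcal{P}|_S,\mathcal{P}|_T})$ in $\mathbf{cf}(U_{\mathcal{P}|_S})\otimes \mathbf{cf}(U_{\mathcal{P}|_T})$. (One should note in passing that $\sigma_{\mathcal{P}|_S,\mathcal{P}|_T}$ really is a group homomorphism: for $i\in S$, $j\in T$ the off-diagonal entry $(\sigma(g,h)\sigma(g',h'))_{ij}$ vanishes because every index $k$ with $i\prec_\mathcal{P} k\prec_\mathcal{P} j$ forces a factor from the ``wrong'' block.) There is no real obstacle here; the whole argument reduces to the observation that the structure maps are defined as pullbacks along group homomorphisms and that the functor $\mathbf{f}$ restricted to class functions is compatible with $\varphi$.
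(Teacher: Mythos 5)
Your proof is correct and follows essentially the same route as the paper: both arguments reduce to the fact that $\mu_{\mathcal{P},\mathcal{Q}}$ and $\Delta_{\mathcal{P}|_S,\mathcal{P}|_T}$ are pullbacks along the group homomorphisms $\pi_{\mathcal{P},\mathcal{Q}}$ and $\sigma_{\mathcal{P}|_S,\mathcal{P}|_T}$, which send conjugate elements to conjugate elements, so class functions are preserved. Your explicit check that $\varphi$ identifies $\mathbf{cf}(X\times Y)$ with $\mathbf{cf}(X)\otimes\mathbf{cf}(Y)$ is a detail the paper leaves implicit, but it does not change the argument.
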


\begin{proof} We only need to show that $\mu$ and $\Delta$ restrict to a product and a coproduct on $\mathbf{cfp}$. This is equivalent to the maps $\sigma_{\mathcal{P}|_S,\mathcal{P}|_T}$ and $\pi_{\mathcal{P},\mathcal{Q}}$ sending conjugate elements to conjugate elements. As $\sigma_{\mathcal{P}|_S,\mathcal{P}|_T}$ and $\pi_{\mathcal{P},\mathcal{Q}}$ are group homomorphisms, this is in fact the case.
\end{proof}

\begin{proposition} The triple $(\mathbf{scfp},\mu,\Delta)$ defines a Hopf submonoid of $(\mathbf{cfp},\mu,\Delta)$.
\end{proposition}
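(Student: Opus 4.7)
The plan is to show that $\mu$ and $\Delta$ restrict to $\mathbf{scfp}$; since $\mathbf{scfp}\subseteq \mathbf{cfp}$ and the compatibility axioms already hold in $\mathbf{cfp}$, this is all that is needed. Because $\mu_{\mathcal{P},\mathcal{Q}}$ is inflation through $\pi_{\mathcal{P},\mathcal{Q}}$ and $\Delta_{\mathcal{P}|_S,\mathcal{P}|_T}$ is restriction through $\sigma_{\mathcal{P}|_S,\mathcal{P}|_T}$, it suffices to verify that each of $\pi_{\mathcal{P},\mathcal{Q}}$ and $\sigma_{\mathcal{P}|_S,\mathcal{P}|_T}$ sends algebra group superclasses into algebra group superclasses, in the sense that preimages/images of superclasses under the relevant group homomorphisms are superclasses (or unions thereof).

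First I would handle $\pi_{\mathcal{P},\mathcal{Q}}\colon U_{\mathcal{P}\cdot\mathcal{Q}}\to U_\mathcal{P}\times U_\mathcal{Q}$. Suppose $g,g'\in U_{\mathcal{P}\cdot\mathcal{Q}}$ satisfy $f(g')=u\,f(g)\,v$ with $u,v\in U_{\mathcal{P}\cdot\mathcal{Q}}$. The key observation is that in $\mathcal{P}\cdot\mathcal{Q}$ every element of $S$ is below every element of $T$, which forces $u_{ik}=0$ when $i\in T, k\in S$ and $v_{lj}=0$ when $l\in T, j\in S$, and similarly $f(g)_{kl}=0$ when $k\in T,l\in S$. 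Expanding the matrix product $u\,f(g)\,v$ block by block then yields $f(g'_S)=u_S f(g_S) v_S$ and $f(g'_T)=u_T f(g_T) v_T$, with $u_S,v_S\in U_\mathcal{P}$ and $u_T,v_T\in U_\mathcal{Q}$. Hence $\pi_{\mathcal{P},\mathcal{Q}}$ sends any superclass of $U_{\mathcal{P}\cdot\mathcal{Q}}$ into a single superclass of $U_\mathcal{P}\times U_\mathcal{Q}$ (with the product supercharacter theory), so $\mu_{\mathcal{P},\mathcal{Q}}(\alpha\otimes\beta)$ is constant on superclasses whenever $\alpha$ and $\beta$ are.

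Next I would handle $\sigma_{\mathcal{P}|_S,\mathcal{P}|_T}$, which is easily checked to be a group homomorphism $U_{\mathcal{P}|_S}\times U_{\mathcal{P}|_T}\to U_\mathcal{P}$ (again using that cross-block entries are $0$ in its image, so the block-diagonal matrices multiply blockwise). Given $g,g'\in U_{\mathcal{P}|_S}$ with $f(g')=uf(g)v$, and $h,h'\in U_{\mathcal{P}|_T}$ with $f(h')=u'f(h)v'$, set $U=\sigma_{\mathcal{P}|_S,\mathcal{P}|_T}(u,u')\in U_\mathcal{P}$ and $V=\sigma_{\mathcal{P}|_S,\mathcal{P}|_T}(v,v')\in U_\mathcal{P}$. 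A blockwise calculation shows $f(\sigma_{\mathcal{P}|_S,\mathcal{P}|_T}(g',h'))=U\cdot f(\sigma_{\mathcal{P}|_S,\mathcal{P}|_T}(g,h))\cdot V$, so $\sigma_{\mathcal{P}|_S,\mathcal{P}|_T}$ carries each product superclass into a single superclass of $U_\mathcal{P}$. Thus for $\gamma\in\mathbf{scf}(U_\mathcal{P})$, the pullback $\gamma\circ\sigma_{\mathcal{P}|_S,\mathcal{P}|_T}$ is a superclass function on $U_{\mathcal{P}|_S}\times U_{\mathcal{P}|_T}$, and since superclass functions on the direct product (with product supercharacter theory) are spanned by tensor products of superclass functions on each factor, $\Delta_{\mathcal{P}|_S,\mathcal{P}|_T}(\gamma)$ lies in $\mathbf{scf}(U_{\mathcal{P}|_S})\otimes\mathbf{scf}(U_{\mathcal{P}|_T})$, as required.

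The only nontrivial point is the first computation: verifying that restriction to a diagonal block commutes with the two-sided multiplication defining a superclass. Once the vanishing of the off-diagonal blocks of $u$, $v$, and $f(g)$ in $\mathcal{P}\cdot\mathcal{Q}$ is extracted from the definition of the join poset, everything reduces to elementary matrix bookkeeping. With these two claims in hand, the proposition follows from the remark that all structure maps of $\mathbf{scfp}$ are the restrictions of those of $\mathbf{cfp}$, so the axioms of a Hopf monoid are inherited automatically.
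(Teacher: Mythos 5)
Your proposal is correct and follows essentially the same route as the paper: both reduce the claim to showing that $\pi_{\mathcal{P},\mathcal{Q}}$ and $\sigma_{\mathcal{P}|_S,\mathcal{P}|_T}$ send elements in the same (product) superclass to elements in the same superclass, so that inflation and restriction preserve superclass functions. The only difference is that you spell out the block-matrix bookkeeping in the join poset $\mathcal{P}\cdot\mathcal{Q}$, which the paper asserts in one line.
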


\begin{proof} Once again, we only need show that the maps $\sigma_{\mathcal{P}|_S,\mathcal{P}|_T}$ and $\pi_{\mathcal{P},\mathcal{Q}}$ send elements in the same superclass to elements in the same superclass.

\bigbreak

Let $\mathcal{P}$ and $\mathcal{Q}$ be posets on disjoint sets $S$ and $T$, respectively. Suppose that $g$ and $h$ are in same superclass of $U_{\mathcal{P}\cdot\mathcal{Q}}$; then there exist $u,v \in U_{\mathcal{P}\cdot\mathcal{Q}}$ such that $uf(g)v = f(h)$. We have that
\[
        \pi_{\mathcal{P},\mathcal{Q}}(u)f(\pi_{\mathcal{P},\mathcal{Q}}(g)) \pi_{\mathcal{P},\mathcal{Q}}(v) = f(\pi_{\mathcal{P},\mathcal{Q}}(h)),
\]
hence $\pi_{\mathcal{P},\mathcal{Q}}(g)$ and $\pi_{\mathcal{P},\mathcal{Q}}(h)$ are in the same superclass of $U_\mathcal{P} \times U_\mathcal{Q}$.

\bigbreak

Now let $\mathcal{P}$ be a poset on $I$ and let $I = S \sqcup T$. If $(g_1,g_2),(h_1,h_2) \in U_{\mathcal{P}|_S} \times U_{\mathcal{P}|_T}$ are in the same superclass, then there exist $(u_1,u_2),(v_1,v_2) \in U_{\mathcal{P}|_S} \times U_{\mathcal{P}|_T}$ such that $(u_1,u_2)f((g_1,g_2))(v_1,v_2)= f((h_1,h_2))$, and
\[
     \sigma_{\mathcal{P}|_S,\mathcal{P}|_T}((u_1,u_2)) f(\sigma_{\mathcal{P}|_S,\mathcal{P}|_T}((g_1,g_2))) \sigma_{\mathcal{P}|_S,\mathcal{P}|_T} ((v_1,v_2)) = f(\sigma_{\mathcal{P}|_S,\mathcal{P}|_T}((h_1,h_2))).
\]
\end{proof}

Before showing that $(\mathbf{nnfp},\mu,\Delta)$ is a Hopf submonoid of $(\mathbf{scfp},\mu,\Delta)$, we introduce the idea of the restriction and disjoint union of $(\mathbb{F}_q,\mathcal{P})$-set partitions.

\bigbreak

Let $\mathcal{P}$ be a poset on $I$ and let $S \subseteq I$. If $\eta$ is an $(\mathbb{F}_q,\mathcal{P})$-set partition, define
\[
        \eta|_S = \{i \overset{a}{\frown}j \in \eta \mid i,j \in S\}.
\]
Note that $\eta|_S$ is an $(\mathbb{F}_q,\mathcal{P}|_S)$-set partition, and if $\eta$ is nonnesting then so is $\eta|_S$.

\bigbreak

Suppose that $I = S \sqcup T$; if $\eta$ is an $(\mathbb{F}_q,\mathcal{P}|_S)$-set partition and $\nu$ is an $(\mathbb{F}_q,\mathcal{P}|_T)$-set partition, define
\[
        \eta \sqcup \nu = \{i \overset{a}{\frown}j \mid i \overset{a}{\frown}j \in \eta \text{ or }i \overset{a}{\frown}j \in \nu\}.
\]
Note that $\eta \sqcup \nu$ is an $(\mathbb{F}_q,\mathcal{P})$-set partition; it is not in general the case, however, that if $\eta$ and $\nu$ are both nonnesting then $\eta \sqcup \nu$ will also be nonnesting. To deal with this problem, for any $(\mathbb{F}_q,\mathcal{P})$-set partition $\eta$ we define
\begin{align*}
        \text{sml}(\eta) &= \{i \overset{a}{\frown}j \in \eta \mid \text{ there are no } k\overset{b}{\frown}l \in \eta \text{ with } i\prec_\mathcal{P} k\prec_\mathcal{P}l\prec_\mathcal{P}j\} \text{ and} \\
        \text{big}(\eta) &= \{i \overset{a}{\frown}j \in \eta \mid \text{ there are no } k\overset{b}{\frown}l \in \eta \text{ with } k\prec_\mathcal{P}i\prec_\mathcal{P}j\prec_\mathcal{P}l\}.
\end{align*}
Note that both $\text{sml}(\eta)$ and $\text{big}(\eta)$ are nonnesting. See Example~\ref{exbigsml} for an example in the case that $\mathcal{P}$ is a linear order.

\begin{proposition} The triple $(\mathbf{nnfp},\mu,\Delta)$ defines a Hopf submonoid of $(\mathbf{scfp},\mu,\Delta)$.
\end{proposition}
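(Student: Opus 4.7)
The approach mirrors the two preceding propositions. Since $\mathbf{nnf}(U_\mathcal{P})$ consists of those functions constant on the nonnesting superclasses $K_\eta$, and the product and coproduct are defined by
\[
\mu_{\mathcal{P},\mathcal{Q}} = \mathbf{f}(\pi_{\mathcal{P},\mathcal{Q}}) \circ \varphi^{-1} \quad \text{and} \quad \Delta_{\mathcal{P}|_S,\mathcal{P}|_T} = \varphi \circ \mathbf{f}(\sigma_{\mathcal{P}|_S,\mathcal{P}|_T}),
\]
it suffices to prove that the homomorphisms $\pi_{\mathcal{P},\mathcal{Q}}$ and $\sigma_{\mathcal{P}|_S,\mathcal{P}|_T}$ each send a nonnesting superclass of the source into a single nonnesting superclass of the target. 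Granted this, the product of two nonnesting superclass functions pulls back to a nonnesting superclass function, and the coproduct, expanded in the basis of superclass indicators for $U_{\mathcal{P}|_S} \times U_{\mathcal{P}|_T}$, lands in $\mathbf{nnf}(U_{\mathcal{P}|_S}) \otimes \mathbf{nnf}(U_{\mathcal{P}|_T})$.

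For $\pi_{\mathcal{P},\mathcal{Q}}$ the argument is short: in $\mathcal{P} \cdot \mathcal{Q}$ every element of $S$ lies strictly below every element of $T$, so the $(\mathcal{P} \cdot \mathcal{Q})$-interval between two elements of $S$ (respectively $T$) is contained entirely in $S$ (respectively $T$). The condition in the definition of $\text{sml}$ therefore restricts cleanly, yielding $\text{sml}(g_S) = \text{sml}(g)|_S$ and $\text{sml}(g_T) = \text{sml}(g)|_T$ for every $g \in U_{\mathcal{P}\cdot\mathcal{Q}}$. In particular $\pi_{\mathcal{P},\mathcal{Q}}(K_\eta) \subseteq K_{\eta|_S} \times K_{\eta|_T}$, which is a single product of nonnesting superclasses.

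The harder step, which I expect to be the main obstacle, is the analogous statement for $\sigma_{\mathcal{P}|_S,\mathcal{P}|_T}$. Given $(g,g') \in K_\nu \times K_\mu$ with $\nu,\mu$ nonnesting, the element $h = \sigma_{\mathcal{P}|_S,\mathcal{P}|_T}(g,g')$ carries the entries of $g$ on $S \times S$, those of $g'$ on $T \times T$, and zeros on the cross-positions between $S$ and $T$; from the explicit description of $K_\nu$ and $K_\mu$, every nonzero entry of $h$ sits at a position containing some arc of $\nu \sqcup \mu$. The obstruction is that $\mathcal{P}$ may interleave $S$ and $T$ arbitrarily, so $\nu \sqcup \mu$ viewed as an arc diagram on $\mathcal{P}$ is in general nesting even when $\nu$ and $\mu$ are individually nonnesting, and one must pass through $\text{sml}$ to identify the target superclass. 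I would show $\text{sml}(h) = \text{sml}(\nu \sqcup \mu)$ by a case analysis: an arc $(i,j) \in \nu$ fails to be minimal in $h$ only through a position $(k,l) \in T \times T$ with $g'_{kl} \neq 0$ and $i \preceq_\mathcal{P} k \prec_\mathcal{P} l \preceq_\mathcal{P} j$ (same-side failures are ruled out by nonnestingness of $\nu$), and by the description of $K_\mu$ any such $(k,l)$ forces a $\mu$-arc strictly nested inside $(i,j)$ in $\mathcal{P}$; symmetric reasoning handles arcs of $\mu$, and any nonzero entry of $h$ outside $\nu \cup \mu$ contains some arc of $\nu \cup \mu$ and so is not minimal. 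Verifying that the specified values at the arcs of $\text{sml}(\nu \sqcup \mu)$ agree with those inherited from $\nu$ and $\mu$ yields $h \in K_{\text{sml}(\nu \sqcup \mu)}$, completing the proof.
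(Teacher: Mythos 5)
Your proposal is correct and follows essentially the same route as the paper: reduce the statement to showing that $\pi_{\mathcal{P},\mathcal{Q}}$ and $\sigma_{\mathcal{P}|_S,\mathcal{P}|_T}$ respect nonnesting superclasses, via the identities $\text{sml}(g_S)=\text{sml}(g)|_S$, $\text{sml}(g_T)=\text{sml}(g)|_T$ and $\text{sml}(\sigma_{\mathcal{P}|_S,\mathcal{P}|_T}(g_1,g_2))=\text{sml}(\nu\sqcup\mu)$ for $g_1\in K_\nu$, $g_2\in K_\mu$. The only difference is that you supply the case analysis justifying these identities, which the paper simply asserts.
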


\begin{proof} Once again, we only need to show that the maps $\sigma_{\mathcal{P}|_S,\mathcal{P}|_T}$ and $\pi_{\mathcal{P},\mathcal{Q}}$ send elements in the same superclass to elements in the same superclass.

\bigbreak

Let $\mathcal{P}$ and $\mathcal{Q}$ be posets on disjoint sets $S$ and $T$, respectively. Suppose that $g$ and $h$ are in same superclass of $U_{\mathcal{P}\cdot\mathcal{Q}}$; then we have $\text{sml}(g) = \text{sml}(h)$, recalling that
\[
        \text{sml}(g) = \bigg\{i\overset{g_{ij}}{\frown}j \;\bigg|\; \begin{array}{l} g_{ij} \neq 0 \text{ and if } (k,l) \neq (i,j) \text{ and}  \\ i \preceq_\mathcal{P} k\prec_\mathcal{P}l \preceq_\mathcal{P} j,\text{ then } g_{kl}=0\end{array}\bigg\}.
\]
Note that if $\pi_{\mathcal{P},\mathcal{Q}}(g) = (g_S,g_T)$, then $\text{sml}(g_S) = \text{sml}(g)|_S$ and $\text{sml}(g_T) = \text{sml}(g)|_T$. It follows that $\pi_{\mathcal{P},\mathcal{Q}}(g)$ and $\pi_{\mathcal{P},\mathcal{Q}}(h)$ are in the same superclass of $U_\mathcal{P} \times U_\mathcal{Q}$.

\bigbreak

Now let $\mathcal{P}$ be a poset on $I$ and let $I = S \sqcup T$. If $(g_1,g_2),(h_1,h_2) \in U_{\mathcal{P}|_S} \times U_{\mathcal{P}|_T}$ are in the same superclass, then $\text{sml}(g_1) = \text{sml}(h_1)$ and $\text{sml}(g_2) = \text{sml}(h_2)$. Observe that
\[
        \text{sml}(\sigma_{\mathcal{P}|_S,\mathcal{P}|_T}((g_1,g_2))) = \text{sml}(\nu_{g_1}\sqcup \nu_{g_2}),
\]
hence $\sigma_{\mathcal{P}|_S,\mathcal{P}|_T}((g_1,g_2))$ and $\sigma_{\mathcal{P}|_S,\mathcal{P}|_T}((h_1,h_2))$ are in the same superclass of $U_{\mathcal{P}}$.
\end{proof}

We mention several other submonoids of interest; in \cite{MR3117506}, Aguiar et al. construct Hopf monoids from the groups of unitriangular matrices with entries in $\mathbb{F}_q$. For a finite set $I$, let $L[I]$ be the set of linear orders on $I$. These are all posets on $I$; let
\begin{align*}
    \mathbf{f}(U)[I] & = \bigoplus_{\mathcal{P}\in L[I]} \mathbf{f}(U_\mathcal{P}), \\
    \mathbf{cf}(U)[I] & = \bigoplus_{\mathcal{P}\in L[I]} \mathbf{cf}(U_\mathcal{P}), \\
    \mathbf{scf}(U)[I] & = \bigoplus_{\mathcal{P}\in L[I]} \mathbf{scf}(U_\mathcal{P}), \text{ and} \\
    \mathbf{nnf}(U)[I] & = \bigoplus_{\mathcal{P}\in L[I]} \mathbf{nnf}(U_\mathcal{P}).
\end{align*}
In \cite{MR3117506}, Aguiar et al.\ show that the triple $(\mathbf{f}(U),\mu,\Delta)$ is a Hopf monoid with submonoids $(\mathbf{cf}(U),\mu,\Delta)$ and $(\mathbf{scf}(U),\mu,\Delta)$.

\begin{corollary} We have the following inclusions of Hopf monoids:
\begin{align*}
        (\mathbf{f}(U),\mu,\Delta) & \subseteq (\mathbf{fp},\mu,\Delta), \\
        (\mathbf{cf}(U),\mu,\Delta) & \subseteq (\mathbf{cfp},\mu,\Delta), \\
        (\mathbf{scf}(U),\mu,\Delta) & \subseteq (\mathbf{scfp},\mu,\Delta), \\
        (\mathbf{nnf}(U),\mu,\Delta) & \subseteq (\mathbf{nnfp},\mu,\Delta),\text{ and} \\
        (\mathbf{nnf}(U),\mu,\Delta) & \subseteq (\mathbf{scf}(U),\mu,\Delta).
\end{align*}
\end{corollary}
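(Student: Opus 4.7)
The plan is to observe that each of the first four inclusions follows from a single structural remark: for a finite set $I$, the collection $L[I]$ of linear orders on $I$ is a subcollection of the collection of all posets on $I$, so each direct sum defining $\mathbf{f}(U)[I]$, $\mathbf{cf}(U)[I]$, $\mathbf{scf}(U)[I]$, or $\mathbf{nnf}(U)[I]$ is a subspace of the corresponding direct sum defining $\mathbf{fp}[I]$, $\mathbf{cfp}[I]$, $\mathbf{scfp}[I]$, or $\mathbf{nnfp}[I]$. Since the Hopf submonoid structures on $\mathbf{cfp}$, $\mathbf{scfp}$, and $\mathbf{nnfp}$ have already been established, it suffices to verify that each of $\mathbf{f}(U)$, $\mathbf{cf}(U)$, $\mathbf{scf}(U)$, $\mathbf{nnf}(U)$ is closed under the product $\mu$ and the coproduct $\Delta$ of $\mathbf{fp}$.

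For closure under $\mu$, I would note that if $\mathcal{P}$ and $\mathcal{Q}$ are linear orders on disjoint sets $S$ and $T$, then by the definition of $\mathcal{P}\cdot\mathcal{Q}$ (with $x \prec y$ whenever $x \in S, y \in T$, and otherwise inheriting the order from $\mathcal{P}$ or $\mathcal{Q}$), the poset $\mathcal{P}\cdot\mathcal{Q}$ is itself a linear order on $S \sqcup T$. Hence $\mu_{\mathcal{P},\mathcal{Q}}$ lands in the summand indexed by a linear order, so $\mu$ restricts to each of the four species. For closure under $\Delta$, if $\mathcal{P}$ is a linear order on $I = S \sqcup T$, then its restrictions $\mathcal{P}|_S$ and $\mathcal{P}|_T$ are linear orders on $S$ and $T$, so $\Delta_{\mathcal{P}|_S,\mathcal{P}|_T}$ lands in the summand indexed by a pair of linear orders. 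This gives the first four inclusions simultaneously.

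For the fifth inclusion $(\mathbf{nnf}(U),\mu,\Delta) \subseteq (\mathbf{scf}(U),\mu,\Delta)$, it is enough to show that for each linear order $\mathcal{P}$ on $I$, $\mathbf{nnf}(U_\mathcal{P}) \subseteq \mathbf{scf}(U_\mathcal{P})$. This follows from the fact, observed in Section~\ref{secnnutn}, that the nonnesting supercharacter theory of $U_\mathcal{P} = UT_n(\mathbb{F}_q)$ is coarser than the algebra group supercharacter theory: each nonnesting superclass $K_{[\eta]}$ is defined as a union of algebra group superclasses $K_\nu$, so any function constant on the sets $K_{[\eta]}$ is automatically constant on the sets $K_\nu$. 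The inclusion is then an inclusion of Hopf monoids because both structures are obtained by restricting $(\mu,\Delta)$ from the ambient $\mathbf{f}(U)$.

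No step in this proof is a genuine obstacle; the work is entirely a bookkeeping check that linear orders are preserved by the two combinatorial operations $(\mathcal{P},\mathcal{Q}) \mapsto \mathcal{P}\cdot\mathcal{Q}$ and $\mathcal{P} \mapsto (\mathcal{P}|_S, \mathcal{P}|_T)$, together with the already-noted coarseness of the nonnesting theory. I would therefore present the corollary with a short unified proof rather than five separate arguments, simply remarking that the restrictions of $\mu$ and $\Delta$ to the subspecies $\mathbf{f}(U) \subseteq \mathbf{fp}$ are well-defined because linear orders are closed under concatenation and restriction, and then citing Theorem~\ref{theoremnnsct} for the final inclusion.
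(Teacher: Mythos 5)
Your proof is correct and matches the paper's (implicit) argument: the paper states this corollary without proof precisely because it follows from the already-established submonoid structures together with the bookkeeping facts you verify, namely that linear orders are closed under concatenation $\mathcal{P}\cdot\mathcal{Q}$ and restriction, and that nonnesting superclasses are unions of algebra group superclasses, so nonnesting superclass functions are algebra group superclass functions.
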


\section{Combinatorics of the product and coproduct}\label{seccomb}

The space of superclass functions of $UT_n(\mathbb{F}_q)$ has three standard bases: the superclass basis, the power sum basis, and the supercharacter basis. In \cite{MR3117506}, the product and coproduct of $(\mathbf{scf}(U),\mu,\Delta)$ are described combinatorially in terms of the superclass basis and the power sum basis. There is a combinatorial description of the product in terms of the supercharacter basis provided in \cite[Section~2.3]{MR2880223}, however there is no known combinatorial description of the coproduct in terms of the supercharacter basis. In this section, we provide combinatorial descriptions of the product and coproduct of $(\mathbf{nnfp},\mu,\Delta)$ with respect to the superclass and supercharacter bases and a combinatorial description of the product with respect to the power sum basis. These descriptions restrict to combinatorial descriptions of the product and coproduct of $(\mathbf{nnf}(U),\mu,\Delta)$. With respect to all three bases, the product of nonnesting supercharacters has an identical formula to that for the product of algebra group supercharacters; the coproduct, however, behaves quite differently.

\subsection{The superclass basis}

Let $\mathcal{P}$ be a linear order on $I$; then $U_\mathcal{P} \cong UT_n(\mathbb{F}_q)$ (where $|I|=n$). The supercharacters and superclasses of $U_\mathcal{P}$ in the algebra group supercharacter theory are indexed by the set of $(\mathbb{F}_q,\mathcal{P})$-set partitions, which as in Section~\ref{seclsp} we denote $\Pi(\mathcal{P},q)$. For $\eta \in \Pi(\mathcal{P},q)$, let $K_\eta$ be the superclass of $U_{\mathcal{P}}$ associated to $\eta$; define
\begin{equation}\label{sclbasisutn}
        \kappa_\eta(g) = \left\{\begin{array}{ll}
        1 & \quad \text{if } g \in K_\eta, \\
        0 & \quad \text{otherwise.}\end{array}\right.
\end{equation}
The $\kappa_\eta$ form the \emph{superclass basis} of $\mathbf{scf}(U)$, with respect to which the product and coproduct of $(\mathbf{scf}(U),\mu,\Delta)$ have a nice combinatorial description.

\begin{proposition}[{\cite[Equation 42]{MR3117506}}] Let $\mathcal{P}$ and $\mathcal{Q}$ be linear orders on disjoint sets $S$ and $T$, respectively. If $\eta\in \Pi(\mathcal{P},q)$ and $\nu\in\Pi(\mathcal{Q},q)$, we have
\[
        \mu_{\mathcal{P},\mathcal{Q}}(\kappa_\eta \otimes \kappa_\nu) = \sum_{\substack{\rho \in \Pi(\mathcal{P}\cdot\mathcal{Q},q) \\ \rho|_{S} = \eta , \rho|_{T} = \nu}} \kappa_\rho.
\]
\end{proposition}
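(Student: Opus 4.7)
The plan is to evaluate both sides of the identity at an arbitrary element $g \in U_{\mathcal{P}\cdot\mathcal{Q}}$ and check that they agree. First I would unwind the definition of $\mu_{\mathcal{P},\mathcal{Q}}$ to obtain
\[
\mu_{\mathcal{P},\mathcal{Q}}(\kappa_\eta \otimes \kappa_\nu)(g) = \kappa_\eta(g_S)\,\kappa_\nu(g_T),
\]
which is $1$ precisely when $g_S \in K_\eta$ and $g_T \in K_\nu$, and $0$ otherwise. Since the algebra-group superclasses partition $U_{\mathcal{P}\cdot\mathcal{Q}}$, the sum $\sum_\rho \kappa_\rho$ evaluated at $g$ equals $1$ exactly when the unique $\rho \in \Pi(\mathcal{P}\cdot\mathcal{Q},q)$ with $g \in K_\rho$ satisfies $\rho|_S = \eta$ and $\rho|_T = \nu$. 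The proposition therefore reduces to the compatibility statement: if $\rho$ is the superclass label of $g$, then $\rho|_S$ and $\rho|_T$ are the superclass labels of $g_S$ and $g_T$ respectively.

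The main input will be the block structure of $U_{\mathcal{P}\cdot\mathcal{Q}}$. Because $\mathcal{P}\cdot\mathcal{Q}$ places every element of $S$ below every element of $T$ in the linear order, every matrix in $U_{\mathcal{P}\cdot\mathcal{Q}} \cong UT_{|I|}(\mathbb{F}_q)$ has block upper-triangular form with top-left block $g_S \in U_\mathcal{P}$ and bottom-right block $g_T \in U_\mathcal{Q}$. The essential observation is that the canonical orbit representative $x_\rho \in \frak{u}_{\mathcal{P}\cdot\mathcal{Q}}$ from Proposition~\ref{lemtypeascl} also has this block structure, with diagonal blocks $x_{\rho|_S}$ and $x_{\rho|_T}$, since the nonzero entries of $x_\rho$ in the $(S,S)$-block are precisely the labeled arcs of $\rho$ with both endpoints in $S$, which by definition constitute $\rho|_S$.

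With this in hand, I would finish the proof by a direct matrix computation. Writing $g - 1 = u\, x_\rho\, v$ with $u, v \in UT_{|I|}(\mathbb{F}_q)$ and decomposing $u$ and $v$ in block upper-triangular form, one reads off that the $(S,S)$-block of $u\,x_\rho\,v$ is $u_1\, x_{\rho|_S}\, v_1$, where $u_1, v_1 \in U_\mathcal{P}$ are the top-left blocks of $u$ and $v$. Equating $(S,S)$-blocks yields $g_S - 1 = u_1\, x_{\rho|_S}\, v_1$, so $g_S \in K_{\rho|_S}$; the symmetric argument applied to the $(T,T)$-block gives $g_T \in K_{\rho|_T}$, completing the reduction. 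No genuine obstacle arises beyond carefully setting up the block multiplication: the combinatorics of restricting arc diagrams matches exactly the combinatorics of extracting diagonal blocks of $x_\rho$.
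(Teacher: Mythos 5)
Your proof is correct. Note that the paper does not prove this proposition itself---it is quoted from Aguiar--Bergeron--Thiem \cite{MR3117506}---so the natural internal comparison is with the paper's proof of the nonnesting analogue, Proposition~\ref{nnprodscl}, which follows exactly your outline: evaluate both sides at $g\in U_{\mathcal{P}\cdot\mathcal{Q}}$ and reduce everything to the compatibility $\pi_{\mathcal{P},\mathcal{Q}}(g)\in K_{\rho|_S}\times K_{\rho|_T}$. In the nonnesting setting that compatibility is immediate from $\mathrm{sml}(g_S)=\mathrm{sml}(g)|_S$, whereas in the algebra-group setting treated here it genuinely needs your block argument: since the $(T,S)$-blocks of $u$, $x_\rho$, and $v$ all vanish, the $(S,S)$-block of $u\,x_\rho\,v$ collapses to $u_1\,x_{\rho|_S}\,v_1$ (and the $(T,T)$-block to $u_2\,x_{\rho|_T}\,v_2$), so $f(g_S)\in U_{\mathcal{P}}\,x_{\rho|_S}\,U_{\mathcal{P}}$; combined with Proposition~\ref{lemtypeascl} and Corollary~\ref{cortypeaschscl}, which guarantee that distinct $\mathbb{F}_q$-set partitions label distinct superclasses, this pins down $g_S\in K_{\rho|_S}$ and $g_T\in K_{\rho|_T}$, and the identity of the two sides follows as you say.
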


\begin{proposition}[{\cite[Equation 43]{MR3117506}}] Let $\mathcal{P}$ be a linear order on $I$ and let $I = S \sqcup T$. If $\eta\in\Pi(\mathcal{P},q)$, we have
\[
        \Delta_{\mathcal{P}|_S,\mathcal{P}|_T}(\kappa_\eta) = \left\{
        \begin{array}{ll}
        \kappa_{\eta|_{S}}\otimes \kappa_{\eta|_{T}} &\quad \text{if } \eta = \eta|_{S}\sqcup\eta|_{T}, \\
        0 & \quad \text{otherwise.}\end{array}\right.
\]
\end{proposition}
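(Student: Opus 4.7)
The plan is to unwind the definition $\Delta_{\mathcal{P}|_S,\mathcal{P}|_T}(\kappa_\eta) = \varphi(\kappa_\eta \circ \sigma_{\mathcal{P}|_S,\mathcal{P}|_T})$ and compute the pullback directly, reducing the problem to determining, for each pair $(g, h) \in U_{\mathcal{P}|_S} \times U_{\mathcal{P}|_T}$, whether $\sigma_{\mathcal{P}|_S,\mathcal{P}|_T}(g, h) \in K_\eta$.

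The central step will be the following identification: if $g$ lies in the algebra group superclass $K_\alpha$ of $U_{\mathcal{P}|_S}$ and $h \in K_\beta$ in $U_{\mathcal{P}|_T}$, then $\sigma_{\mathcal{P}|_S,\mathcal{P}|_T}(g, h) \in K_{\alpha \sqcup \beta}$, where $\alpha \sqcup \beta \in \Pi(\mathcal{P},q)$ is obtained by taking the union of the labeled arc sets. To prove this, I would write $g - 1 = u_1 x_\alpha v_1$ and $h - 1 = u_2 x_\beta v_2$ with $u_i, v_i$ in the respective pattern groups, and set $U = \sigma_{\mathcal{P}|_S,\mathcal{P}|_T}(u_1, u_2)$ and $V = \sigma_{\mathcal{P}|_S,\mathcal{P}|_T}(v_1, v_2)$ in $U_\mathcal{P}$. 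The goal is to verify the matrix identity $U \cdot x_{\alpha \sqcup \beta} \cdot V = \sigma_{\mathcal{P}|_S,\mathcal{P}|_T}(g, h) - 1$. This is a block-structure computation: each of the three factors on the left has $(i,j)$-entries that vanish whenever $i$ and $j$ lie in different blocks of $I = S \sqcup T$, so any $(i,j)$-entry of the product with $i$ and $j$ in different blocks also vanishes (the intermediate summation indices cannot simultaneously lie in the block of $i$ and in the block of $j$), and for same-block indices the product reduces either to $u_1 x_\alpha v_1$ or to $u_2 x_\beta v_2$.

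Given this identification, the conclusion follows immediately. Since the algebra group superclasses partition $U_\mathcal{P}$, we have $\sigma_{\mathcal{P}|_S,\mathcal{P}|_T}(g, h) \in K_\eta$ if and only if $\eta = \alpha \sqcup \beta$ where $\alpha$ and $\beta$ are the superclasses containing $g$ and $h$. Such a decomposition of $\eta$ exists precisely when no arc of $\eta$ has one endpoint in $S$ and the other in $T$, equivalently when $\eta = \eta|_S \sqcup \eta|_T$, in which case the decomposition is forced to be $\alpha = \eta|_S$ and $\beta = \eta|_T$. Thus $\kappa_\eta \circ \sigma_{\mathcal{P}|_S,\mathcal{P}|_T}$ is the indicator function of $K_{\eta|_S} \times K_{\eta|_T}$ when $\eta = \eta|_S \sqcup \eta|_T$ and vanishes identically otherwise; applying $\varphi$ yields the claimed formula. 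The main obstacle is the matrix-level bookkeeping in the central identification, but once the block-decomposed structure of $\sigma$ on matrices is made explicit it reduces to a routine check.
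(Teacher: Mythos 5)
Your proof is correct. Note that the paper does not actually prove this proposition---it is imported from Aguiar--Bergeron--Thiem \cite{MR3117506}---so there is no internal argument to compare against; but your route is precisely the template the paper uses for its nonnesting analogue (Proposition~\ref{nncoprodscl}): identify the superclass containing $\sigma_{\mathcal{P}|_S,\mathcal{P}|_T}(g,h)$ and then read off the indicator function. What you add to that template is the explicit verification that $g\in K_\alpha$ and $h\in K_\beta$ force $\sigma_{\mathcal{P}|_S,\mathcal{P}|_T}(g,h)\in K_{\alpha\sqcup\beta}$, and your block computation does work: all three factors $U$, $x_{\alpha\sqcup\beta}$, $V$ are supported on $(S\times S)\cup(T\times T)$ together with the diagonal, so mixed entries of the product vanish and the same-block entries reduce to $u_1x_\alpha v_1$ and $u_2x_\beta v_2$. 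Two small points are worth making explicit for completeness: first, $\alpha\sqcup\beta$ really is an element of $\Pi(\mathcal{P},q)$, since arcs of $\alpha$ and arcs of $\beta$ have endpoint sets contained in $S$ and $T$ respectively and hence can never share an endpoint, so any violation of the defining condition would already occur inside $\alpha$ or inside $\beta$; second, the final step uses that distinct elements of $\Pi(\mathcal{P},q)$ index disjoint superclasses, which is exactly Proposition~\ref{lemtypeascl}, so $\sigma_{\mathcal{P}|_S,\mathcal{P}|_T}(g,h)\in K_\eta$ indeed forces $\eta=\alpha\sqcup\beta$. It is also worth observing why your argument does not simply transfer to the nonnesting setting: there $\nu\sqcup\rho$ need not be a valid index, which is why the paper's Proposition~\ref{nncoprodscl} must pass to $\text{sml}(\nu\sqcup\rho)$, whereas for the algebra-group theory over a linear order the union is automatically admissible.
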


Now let $\mathcal{P}$ be any poset on $I$; the supercharacters and superclasses of $U_\mathcal{P}$ in the nonnesting supercharacter theory are indexed by the set of nonnesting $(\mathbb{F}_q,\mathcal{P})$-set partitions, which as in Section~\ref{seclsp} we denote $\text{NN}(\mathcal{P},q)$. For $\eta \in \text{NN}(\mathcal{P},q)$, let $K_\eta$ denote the superclass associated to $\eta$; define
\begin{equation}\label{sclbasisnn}
        \kappa_\eta(g) = \left\{\begin{array}{ll}
        1 & \quad \text{if } g \in K_\eta, \\
        0 & \quad \text{otherwise.}\end{array}\right.
\end{equation}
The $\kappa_\eta$ form the supercharacter basis of $\mathbf{nnfp}$, with respect to which the product and coproduct of $(\mathbf{nnfp},\mu,\Delta)$ have a nice combinatorial description.

\begin{proposition}\label{nnprodscl} Let $\mathcal{P}$ and $\mathcal{Q}$ be posets on disjoint sets $S$ and $T$, respectively. If $\eta\in \text{NN}(\mathcal{P},q)$ and $\nu\in \text{NN}(\mathcal{Q},q)$, then
\[
        \mu_{\mathcal{P},\mathcal{Q}}(\kappa_\eta \otimes \kappa_\nu) = \sum_{\substack{\rho \in \text{NN}(\mathcal{P}\cdot\mathcal{Q},q) \\ \rho|_{S} = \eta , \rho|_{T} = \nu}} \kappa_\rho.
\]
\end{proposition}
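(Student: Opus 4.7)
The plan is to unwind the definition of the product, reduce the claim to an identity about superclasses, and then verify that identity via a compatibility lemma between $\text{sml}(\cdot)$ and the decomposition $g \mapsto (g_S, g_T)$.

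First I would unpack the definition: since $\mu_{\mathcal{P},\mathcal{Q}} = \mathbf{f}(\pi_{\mathcal{P},\mathcal{Q}}) \circ \varphi^{-1}$ and $\pi_{\mathcal{P},\mathcal{Q}}(g) = (g_S,g_T)$, the function $\mu_{\mathcal{P},\mathcal{Q}}(\kappa_\eta \otimes \kappa_\nu)$ evaluated at $g \in U_{\mathcal{P}\cdot\mathcal{Q}}$ equals $\kappa_\eta(g_S)\kappa_\nu(g_T)$. By definition of the superclass basis, this is $1$ if $\text{sml}(g_S) = \eta$ and $\text{sml}(g_T) = \nu$, and $0$ otherwise. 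On the right-hand side, for each $g$ the function $\sum_\rho \kappa_\rho(g)$ picks out the unique $\rho = \text{sml}(g) \in \text{NN}(\mathcal{P}\cdot\mathcal{Q},q)$ containing $g$ in its superclass, and contributes $1$ precisely when $\rho|_S = \eta$ and $\rho|_T = \nu$. So it suffices to prove the identity
\[
\text{sml}(g)|_S = \text{sml}(g_S) \quad\text{and}\quad \text{sml}(g)|_T = \text{sml}(g_T)
\]
for every $g \in U_{\mathcal{P}\cdot\mathcal{Q}}$.

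The main step, and the only place real work is needed, is this compatibility lemma. I would prove it using the key structural feature of $\mathcal{P}\cdot\mathcal{Q}$: every element of $S$ is strictly below every element of $T$ in $\mathcal{P}\cdot\mathcal{Q}$. Fix $i,j \in S$ with $g_{ij} \neq 0$. The arc $i \overset{g_{ij}}{\frown} j$ lies in $\text{sml}(g)$ iff $g_{kl} = 0$ for every $(k,l) \neq (i,j)$ with $i \preceq_{\mathcal{P}\cdot\mathcal{Q}} k \prec_{\mathcal{P}\cdot\mathcal{Q}} l \preceq_{\mathcal{P}\cdot\mathcal{Q}} j$. Because $j \in S$ and $l \preceq_{\mathcal{P}\cdot\mathcal{Q}} j$, no such $l$ can lie in $T$; hence $l \in S$, and then $k \preceq_{\mathcal{P}\cdot\mathcal{Q}} l$ forces $k \in S$ as well. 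Consequently the quantification collapses to pairs $(k,l) \in S \times S$ satisfying $i \preceq_{\mathcal{P}} k \prec_{\mathcal{P}} l \preceq_{\mathcal{P}} j$, with $g_{kl} = (g_S)_{kl}$. This is exactly the condition that defines $\text{sml}(g_S)$, yielding $\text{sml}(g)|_S = \text{sml}(g_S)$. The argument for $T$ is symmetric (here $i \in T$ forces $k \in T$ and then $l \in T$).

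With the lemma in hand the proposition follows: both sides, as functions of $g$, agree pointwise on $U_{\mathcal{P}\cdot\mathcal{Q}}$. I expect the only subtlety to be keeping the $\mathcal{P}\cdot\mathcal{Q}$ order relations straight in the lemma, but once one observes that any interval $[i,j]$ with $i,j \in S$ stays entirely inside $S$ (and dually for $T$), the identification $\text{sml}(g)|_S = \text{sml}(g_S)$ is essentially automatic and the proof is complete.
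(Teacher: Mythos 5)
Your proof is correct and follows essentially the same route as the paper: both reduce the identity to the fact that for $g \in U_{\mathcal{P}\cdot\mathcal{Q}}$ in the superclass $K_\rho$ one has $\pi_{\mathcal{P},\mathcal{Q}}(g) \in K_{\rho|_S}\times K_{\rho|_T}$, i.e.\ $\text{sml}(g_S)=\text{sml}(g)|_S$ and $\text{sml}(g_T)=\text{sml}(g)|_T$. The only difference is that you spell out the verification of this compatibility (using that every element of $S$ lies below every element of $T$ in $\mathcal{P}\cdot\mathcal{Q}$), which the paper simply asserts, having noted it earlier in the proof that $\mathbf{nnfp}$ is a Hopf submonoid.
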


\begin{proof} Let $g \in U_{\mathcal{P}\cdot \mathcal{Q}}$, and let $\rho \in \text{NN}(\mathcal{P}\cdot\mathcal{Q},q)$ be such that $g \in K_\rho$. Then we have
\[
        \pi_{\mathcal{P},\mathcal{Q}}(g) \in K_{\rho|_S}\times K_{\rho|_T}.
\]
It follows that
\[
        \mu_{\mathcal{P},\mathcal{Q}}(\kappa_\eta \otimes \kappa_\nu)(g) = \left\{\begin{array}{ll} 1 &\quad\text{if }\rho|_{S} = \eta , \rho|_{T} = \nu, \\
        0 & \quad \text{otherwise.}\end{array}\right.
\]
\end{proof}

\begin{proposition}\label{nncoprodscl} Let $\mathcal{P}$ be a poset on $I$ and let $I = S \sqcup T$. If $\eta\in \text{NN}(\mathcal{P},q)$, then
\[
        \Delta_{\mathcal{P}|_S,\mathcal{P}|_T}(\kappa_\eta) = \sum_{\substack{\nu \in \text{NN}(\mathcal{P}|_S,q) \\ \rho \in \text{NN}(\mathcal{P}|_T,q) \\ \textup{sml}(\nu\sqcup\rho) = \eta}} \kappa_\nu \otimes \kappa_\rho.
\]
\end{proposition}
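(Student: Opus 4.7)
The plan is to verify the claimed identity by evaluating both sides on an arbitrary pair $(g_1, g_2) \in U_{\mathcal{P}|_S} \times U_{\mathcal{P}|_T}$. By the definition of the coproduct, $\Delta_{\mathcal{P}|_S,\mathcal{P}|_T}(\kappa_\eta)(g_1 \otimes g_2) = \kappa_\eta(\sigma_{\mathcal{P}|_S, \mathcal{P}|_T}(g_1, g_2))$, so if $g = \sigma_{\mathcal{P}|_S, \mathcal{P}|_T}(g_1, g_2)$ denotes the block-diagonal element of $U_\mathcal{P}$, this is the indicator of $\text{sml}(g) = \eta$. On the right, writing $\nu = \text{sml}(g_1)$ and $\rho = \text{sml}(g_2)$, the product $\kappa_{\nu'}(g_1)\kappa_{\rho'}(g_2)$ is nonzero exactly when $(\nu',\rho') = (\nu,\rho)$, so the sum evaluates to the indicator of $\text{sml}(\nu \sqcup \rho) = \eta$. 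The proposition therefore reduces to the combinatorial identity $\text{sml}(g) = \text{sml}(\nu \sqcup \rho)$.

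To prove this identity, I would fix a candidate arc $i \overset{a}{\frown} j$ and chase the conditions. Since $g$ is block-diagonal, $g_{ij} \neq 0$ forces $i$ and $j$ to lie in the same part of $I = S \sqcup T$; by symmetry, suppose $i, j \in S$, so that $g_{ij} = (g_1)_{ij} = a$. Membership $i \overset{a}{\frown} j \in \text{sml}(g)$ requires every $(k,l) \neq (i,j)$ with $i \preceq_\mathcal{P} k \prec_\mathcal{P} l \preceq_\mathcal{P} j$ to satisfy $g_{kl} = 0$. Splitting such $(k,l)$ according to whether its endpoints lie in $S \times S$ or $T \times T$ (cross cases vanishing automatically), this becomes the conjunction of (a) $i \overset{a}{\frown} j \in \text{sml}(g_1) = \nu$ and (b) no $(k, l)$ with $k, l \in T$ and $(g_2)_{kl} \neq 0$ satisfies $i \prec_\mathcal{P} k \prec_\mathcal{P} l \prec_\mathcal{P} j$.

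The alternative description of nonnesting superclasses given in Section~\ref{secnnp} — namely, that the nonzero entries of $g_2 \in K_\rho$ are exactly the weak containers of arcs of $\rho$ — then translates condition (b) into the purely combinatorial statement that no arc of $\rho$ is strictly nested inside $i \frown j$. Since $\nu$ is itself nonnesting in $\mathcal{P}|_S$, the analogous statement for arcs of $\nu$ strictly nested inside $i \frown j$ is automatic, and combined these are precisely the conditions defining $i \overset{a}{\frown} j \in \text{sml}(\nu \sqcup \rho)$. A symmetric argument handles the case $i, j \in T$, yielding $\text{sml}(g) = \text{sml}(\nu \sqcup \rho)$.

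The main obstacle is the careful reconciliation of the two flavors of the $\text{sml}$ operator appearing in the paper: the one on group elements uses the weak order $\preceq_\mathcal{P}$ together with the clause $(k,l) \neq (i,j)$, while the one on $(\mathbb{F}_q, \mathcal{P})$-set partitions uses strict $\prec_\mathcal{P}$ on both endpoints. The set-partition axioms applied to $\nu$ and $\rho$ individually rule out the shared-endpoint configurations that distinguish the two conventions within a single block, and the disjointness of $S$ and $T$ handles cross-block nested positions. Once this bookkeeping is in place, both sides of the proposed identity evaluate to the same indicator function of $(g_1, g_2)$.
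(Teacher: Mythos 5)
Your proof is correct and follows essentially the same route as the paper: evaluate both sides on an arbitrary pair $(g_1,g_2) \in U_{\mathcal{P}|_S}\times U_{\mathcal{P}|_T}$ and reduce the claim to the identity $\text{sml}(\sigma_{\mathcal{P}|_S,\mathcal{P}|_T}(g_1,g_2)) = \text{sml}(\text{sml}(g_1)\sqcup\text{sml}(g_2))$. The only difference is that the paper simply asserts this compatibility (it is the same observation used in showing $(\mathbf{nnfp},\mu,\Delta)$ is a submonoid), whereas you verify it arc by arc; your handling of the weak-versus-strict conventions and your use of the explicit description of the superclasses $K_\rho$ are sound.
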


\begin{proof} Let $(g,h) \in U_{\mathcal{P}|_S}\times U_{\mathcal{P}|_T}$, with $g \in K_\nu$ and $h \in K_\rho$; then
\[
        \sigma_{\mathcal{P}|_S,\mathcal{P}|_T}(g,h) \in K_{\text{sml}(\nu\sqcup\rho)}.
\]
It follows that
\[
        \varphi(\Delta_{\mathcal{P}|_S,\mathcal{P}|_T}(\kappa_\eta))((g,h))
        = \left\{\begin{array}{ll}
        1 &\quad \text{if } \eta = \text{sml}(\nu\sqcup\rho), \\
        0 & \quad \text{otherwise,}\end{array}\right.
\]
where once again $\varphi$ is the canonical isomorphism from $\mathbf{f}(U_{\mathcal{P}|_S}) \otimes \mathbf{f}(U_{\mathcal{P}|_T})$ to $\mathbf{f}(U_{\mathcal{P}|_S}\times U_{\mathcal{P}|_T})$.
\end{proof}

\begin{remark} It is worth noting that although the formula for the product in $(\mathbf{nnfp},\mu,\Delta)$ with respect to the superclass basis is identical to that of $(\mathbf{scf}((U),\mu,\Delta)$, the same is not true of the coproduct. We do, however, still have that $\Delta_{\mathcal{P}|_S,\mathcal{P}|_T}(\kappa_\eta) = 0$ unless $\eta = \eta|_S \sqcup\eta|_T$.
\end{remark}

\subsection{The power sum basis}\label{secpsb}

Given two $(\mathbb{F}_q,\mathcal{P})$-set partitions $\eta$ and $\nu$, we say that $\eta \subseteq \nu$ if the edge set of $\eta$ is contained in the edge set of $\nu$. Let $\mathcal{P}$ be a linear order on $I$; for an $(\mathbb{F}_q,\mathcal{P})$-set partition $\eta$, define
\begin{equation}\label{psbasisutn}
        p_\eta = \sum_{\substack{\nu \in \Pi(\mathcal{P},q) \\ \eta \subseteq \nu}} \kappa_\nu,
\end{equation}
where $\kappa_\nu$ is as in Equation~\ref{sclbasisutn}. The functions $p_\eta$ form the \emph{power sum basis} of $\mathbf{scf}(U)$, with respect to which the product and coproduct of $(\mathbf{scf}(U),\mu,\Delta)$ have a nice combinatorial description.

\begin{proposition}[{\cite[Proposition 14]{MR3117506}}] Let $\mathcal{P}$ and $\mathcal{Q}$ be linear orders on disjoint sets $S$ and $T$, respectively. If $\eta\in \Pi(\mathcal{P},q)$ and $\nu\in\Pi(\mathcal{Q},q)$, we have
\[
        \mu_{\mathcal{P},\mathcal{Q}}(p_\eta \otimes p_\nu) = p_{\eta\sqcup\nu}.
\]
\end{proposition}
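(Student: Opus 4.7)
The plan is to reduce the claim to the superclass-basis product formula by expanding each $p_\eta$ as a sum of $\kappa$'s, pushing $\mu_{\mathcal{P},\mathcal{Q}}$ through, and reindexing. Write
\[
    \mu_{\mathcal{P},\mathcal{Q}}(p_\eta \otimes p_\nu) = \sum_{\substack{\eta' \in \Pi(\mathcal{P},q)\\ \eta \subseteq \eta'}}\sum_{\substack{\nu' \in \Pi(\mathcal{Q},q)\\ \nu\subseteq\nu'}} \mu_{\mathcal{P},\mathcal{Q}}(\kappa_{\eta'}\otimes\kappa_{\nu'})
\]
and apply the superclass-basis formula (Proposition, the one immediately preceding, cited from \cite[Equation 42]{MR3117506}) to each term, obtaining a double sum over $\rho \in \Pi(\mathcal{P}\cdot\mathcal{Q},q)$ with $\rho|_S = \eta'$ and $\rho|_T = \nu'$.

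Next, I would switch the order of summation. For each $\rho \in \Pi(\mathcal{P}\cdot\mathcal{Q},q)$, the pair $(\eta',\nu')$ is forced to be $(\rho|_S,\rho|_T)$, so the coefficient of $\kappa_\rho$ is $1$ precisely when $\rho|_S \supseteq \eta$ and $\rho|_T \supseteq \nu$, and $0$ otherwise. Thus
\[
    \mu_{\mathcal{P},\mathcal{Q}}(p_\eta\otimes p_\nu) = \sum_{\substack{\rho \in \Pi(\mathcal{P}\cdot\mathcal{Q},q)\\ \rho|_S \supseteq \eta,\; \rho|_T \supseteq \nu}} \kappa_\rho.
\]

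The last step is to identify this indexing set with $\{\rho \in \Pi(\mathcal{P}\cdot\mathcal{Q},q) \mid \rho \supseteq \eta \sqcup \nu\}$. Because $S$ and $T$ are disjoint, every arc of $\eta$ lies entirely in $S$ and every arc of $\nu$ lies entirely in $T$; hence $\rho \supseteq \eta \sqcup \nu$ is equivalent to containing all $S$-arcs of $\eta$ and all $T$-arcs of $\nu$, i.e.\ to $\rho|_S \supseteq \eta$ and $\rho|_T \supseteq \nu$. Substituting and using the definition of $p_{\eta \sqcup \nu}$ gives the desired equality.

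There is no genuine obstacle here; the argument is a straightforward bookkeeping exercise once the superclass-basis product formula is invoked. The only point meriting a brief sentence of care is the verification that $\eta \sqcup \nu$ is itself a valid element of $\Pi(\mathcal{P}\cdot\mathcal{Q},q)$ (so that $p_{\eta\sqcup\nu}$ is defined), which is immediate: the defining condition on arcs ($i \overset{a}{\frown} j$ with no $i \overset{b}{\frown} k$ or $k \overset{c}{\frown} j$ in the partition for $i<k<j$) has to be checked only within $S$ and within $T$, since in the linear order $\mathcal{P}\cdot\mathcal{Q}$ no element of $T$ lies strictly between two elements of $S$ (and vice versa).
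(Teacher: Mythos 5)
Your argument is correct, and it is essentially the argument in the paper: the paper cites this linear-order statement to \cite[Proposition 14]{MR3117506} without proof, but its proof of the nonnesting analogue proceeds exactly as you do, expanding $p_\eta\otimes p_\nu$ in the $\kappa$-basis, applying the superclass product formula, and reindexing via the observation that $\eta\subseteq\rho|_S$ and $\nu\subseteq\rho|_T$ if and only if $\eta\sqcup\nu\subseteq\rho$. Your closing check that $\eta\sqcup\nu\in\Pi(\mathcal{P}\cdot\mathcal{Q},q)$ is a reasonable extra remark and is correct.
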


\begin{proposition}[{\cite[Proposition 15]{MR3117506}}] Let $\mathcal{P}$ be a linear order on $I$ and let $I = S \sqcup T$. If $\eta\in\Pi(\mathcal{P},q)$, we have
\[
        \Delta_{\mathcal{P}|_S,\mathcal{P}|_T}(p_\eta) = \left\{
        \begin{array}{ll}
        p_{\eta|_{S}}\otimes p_{\eta|_{T}} &\quad \text{if } \eta = \eta|_{S}\sqcup\eta|_{T}, \\
        0 & \quad \text{otherwise.}\end{array}\right.
\]
\end{proposition}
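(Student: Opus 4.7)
The plan is to expand $p_\eta$ in the superclass basis via Equation~\ref{psbasisutn} and apply the coproduct formula for each $\kappa_\nu$ term by term. By linearity,
\[
\Delta_{\mathcal{P}|_S,\mathcal{P}|_T}(p_\eta) = \sum_{\substack{\nu \in \Pi(\mathcal{P},q) \\ \eta \subseteq \nu}} \Delta_{\mathcal{P}|_S,\mathcal{P}|_T}(\kappa_\nu),
\]
and the preceding proposition kills every term except those with $\nu = \nu|_S \sqcup \nu|_T$, each of which contributes $\kappa_{\nu|_S} \otimes \kappa_{\nu|_T}$.

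I would then split on whether $\eta$ contains a \emph{cross arc}, meaning an arc $i \overset{a}{\frown} j \in \eta$ with one endpoint in $S$ and the other in $T$. If $\eta \neq \eta|_S \sqcup \eta|_T$, such a cross arc exists and must appear in every $\nu \supseteq \eta$; then no such $\nu$ can satisfy $\nu = \nu|_S \sqcup \nu|_T$, so the surviving sum is empty and $\Delta_{\mathcal{P}|_S,\mathcal{P}|_T}(p_\eta) = 0$.

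In the case $\eta = \eta|_S \sqcup \eta|_T$, I would exhibit a bijection between the set of $\nu \in \Pi(\mathcal{P},q)$ with $\eta \subseteq \nu$ and $\nu = \nu|_S \sqcup \nu|_T$, and pairs $(\alpha,\beta)$ with $\eta|_S \subseteq \alpha \in \Pi(\mathcal{P}|_S,q)$ and $\eta|_T \subseteq \beta \in \Pi(\mathcal{P}|_T,q)$, sending $\nu$ to $(\nu|_S,\nu|_T)$ with inverse $(\alpha,\beta) \mapsto \alpha \sqcup \beta$. Under this bijection the sum factors as a tensor product of independent sums over $\alpha$ and over $\beta$, which by Equation~\ref{psbasisutn} equals $p_{\eta|_S} \otimes p_{\eta|_T}$.

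The one step requiring care is the well-definedness of the inverse, namely that $\alpha \sqcup \beta$ is a valid $(\mathbb{F}_q,\mathcal{P})$-set partition. For any arc $i \overset{a}{\frown} j$ in $\alpha \sqcup \beta$ and any $i < k < j$, I need to rule out arcs $i \frown k$ or $k \frown j$ in $\alpha \sqcup \beta$. Since every arc in $\alpha \sqcup \beta$ has both endpoints in $S$ or both in $T$, any two arcs sharing an endpoint lie in the same piece, and the required condition reduces to the validity of $\alpha$ or $\beta$ individually. This is more bookkeeping than a genuine obstacle; the argument is essentially a formal dualization of the superclass coproduct formula applied to the triangular expansion defining $p_\eta$.
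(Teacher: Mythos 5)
Your proof is correct: expanding $p_\eta$ in the superclass basis via Equation~\ref{psbasisutn}, applying the $\kappa$-coproduct formula termwise, and matching the surviving $\nu \supseteq \eta$ with $\nu = \nu|_S \sqcup \nu|_T$ bijectively to pairs $(\alpha,\beta)$ with $\eta|_S \subseteq \alpha$, $\eta|_T \subseteq \beta$ is the standard derivation, and your verification that $\alpha \sqcup \beta$ is a genuine $\mathbb{F}_q$-set partition (any arc sharing an endpoint with an arc of $\alpha$ or $\beta$ lies entirely in $S$ or entirely in $T$, while a potential cross arc cannot occur in $\alpha\sqcup\beta$) settles the only delicate point, as does your observation that a cross arc of $\eta$ forces every $\nu \supseteq \eta$ to fail $\nu = \nu|_S \sqcup \nu|_T$. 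Note that the paper states this result only as a citation to \cite[Proposition~15]{MR3117506} without reproducing a proof, so there is no in-paper argument to compare against; your route is the expected one.
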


Now let $\mathcal{P}$ be any poset on $I$. For a nonnesting $(\mathbb{F}_q,\mathcal{P})$-set partition $\eta$, define
\begin{equation}\label{psbasisup}
        p_\eta = \sum_{\substack{\nu \in \text{NN}(\mathcal{P},q) \\ \eta \subseteq \nu}} \kappa_\nu,
\end{equation}
where $\kappa_\nu$ is as in Equation~\ref{sclbasisnn}. The functions $p_\eta$ form the power sum basis of $\mathbf{nnfp}$, with respect to which the product of $(\mathbf{nnfp},\mu,\Delta)$ has a nice combinatorial description.

\begin{proposition} Let $\mathcal{P}$ and $\mathcal{Q}$ be posets on disjoint sets $S$ and $T$, respectively. If $\eta\in \text{NN}(\mathcal{P},q)$ and $\nu\in \text{NN}(\mathcal{Q},q)$, we have
\[
        \mu_{\mathcal{P},\mathcal{Q}}(p_\eta \otimes p_\nu) = p_{\eta\sqcup\nu}.
\]
\end{proposition}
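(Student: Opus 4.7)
The plan is to expand both $p_\eta$ and $p_\nu$ in the superclass basis using Equation~\ref{psbasisup}, apply the combinatorial product formula from Proposition~\ref{nnprodscl}, and then reorganize the double sum. Concretely, I would write
\[
\mu_{\mathcal{P},\mathcal{Q}}(p_\eta \otimes p_\nu) = \sum_{\substack{\alpha \in \textup{NN}(\mathcal{P},q) \\ \alpha \supseteq \eta}} \sum_{\substack{\beta \in \textup{NN}(\mathcal{Q},q) \\ \beta \supseteq \nu}} \mu_{\mathcal{P},\mathcal{Q}}(\kappa_\alpha \otimes \kappa_\beta)
\]
and substitute Proposition~\ref{nnprodscl} to obtain
\[
\mu_{\mathcal{P},\mathcal{Q}}(p_\eta \otimes p_\nu) = \sum_{\substack{\rho \in \textup{NN}(\mathcal{P}\cdot\mathcal{Q},q) \\ \rho|_S \supseteq \eta,\ \rho|_T \supseteq \nu}} \kappa_\rho,
\]
after interchanging the order of summation (each valid $\rho$ is counted once, as $\alpha = \rho|_S$ and $\beta = \rho|_T$ are forced).

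The key combinatorial point is that since every arc of $\eta$ has both endpoints in $S$ and every arc of $\nu$ has both endpoints in $T$, the two conditions $\rho|_S \supseteq \eta$ and $\rho|_T \supseteq \nu$ together are equivalent to the single condition $\rho \supseteq \eta \sqcup \nu$. Given this equivalence, the sum above is precisely $p_{\eta \sqcup \nu}$ by definition, completing the argument.

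The only item requiring care is to verify that $\eta \sqcup \nu$ is itself a valid nonnesting $(\mathbb{F}_q,\mathcal{P}\cdot\mathcal{Q})$-set partition so that $p_{\eta\sqcup\nu}$ is defined. This is automatic: in $\mathcal{P}\cdot\mathcal{Q}$ every element of $S$ is strictly below every element of $T$, so no chain $i \prec_{\mathcal{P}\cdot\mathcal{Q}} k \prec_{\mathcal{P}\cdot\mathcal{Q}} l \prec_{\mathcal{P}\cdot\mathcal{Q}} j$ can straddle $S$ and $T$ (this would require $l \in T$ and $j \in S$, impossible). Hence any nesting in $\eta\sqcup\nu$ would be internal to $\eta$ or internal to $\nu$, contradicting the nonnesting hypothesis on each factor. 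I do not anticipate any genuine obstacle here; the argument is essentially a direct bookkeeping computation, analogous to the corresponding proof for unitriangular matrices in \cite{MR3117506}, with the only new input being the compatibility of disjoint union with restriction across $S$ and $T$.
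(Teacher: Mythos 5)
Your proposal is correct and follows essentially the same route as the paper: expand in the superclass basis, apply Proposition~\ref{nnprodscl}, and observe that $\rho|_S \supseteq \eta$ together with $\rho|_T \supseteq \nu$ is equivalent to $\eta \sqcup \nu \subseteq \rho$. Your explicit check that $\eta \sqcup \nu$ is nonnesting is a point the paper only remarks on after its proof, so including it is a welcome (and correct) addition.
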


\begin{proof} We have that
\begin{align*}
        \mu_{\mathcal{P},\mathcal{Q}}(p_\eta \otimes p_\nu)
        &=  \sum_{\substack{\alpha \in \text{NN}(\mathcal{P},q) \\ \eta \subseteq \alpha}}
            \sum_{\substack{\beta \in \text{NN}(\mathcal{Q},q) \\ \nu \subseteq \beta}}
            \mu_{\mathcal{P},\mathcal{Q}}(\kappa_\alpha \otimes \kappa_\beta) \\
        & = \sum_{\substack{\rho \in \text{NN}(\mathcal{P}\cdot\mathcal{Q}) \\ \eta \subseteq \rho|_S, \nu\subseteq \rho|_T}} \kappa_\rho
\end{align*}
by Proposition~\ref{nnprodscl}. At the same time, an element $\rho \in \text{NN}(\mathcal{P}\cdot \mathcal{Q},q)$ has the property that $\eta \subseteq \rho|_S$ and $\nu\subseteq \rho|_T$ if and only if $\eta \sqcup \nu \subseteq \rho$.
\end{proof}

Note that $\eta \sqcup\nu$ is a nonnesting $(\mathbb{F}_q,\mathcal{P}\cdot\mathcal{Q})$-set partition even though in general disjoint unions of nonnesting set partitions need not be nonnesting.

\bigbreak

We remark that the coproduct on the power sum basis does not behave as one might hope. The following example illustrates where things can get complicated.

\begin{example} If we put the usual linear order on $\{1,2,3,4\}$, then we have
\begin{align*}
        \Delta_{\{1,4\},\{2,3\}}\bigg(p\begin{tikzpicture}[baseline={([yshift=1ex]current bounding box.center)}]
	\fill (0,0) circle (.1);
    \fill (.5,0) circle (.1);
    \fill (1,0) circle (.1);
    \fill (1.5,0) circle (.1);
    \draw (.5,0) to [out=45, in=135] node[above] {$a$} (1,0);
    \end{tikzpicture}\bigg)
    &= \Delta_{\{1,4\},\{2,3\}}\bigg(\kappa\begin{tikzpicture}[baseline={([yshift=1ex]current bounding box.center)}]
	\fill (0,0) circle (.1);
    \fill (.5,0) circle (.1);
    \fill (1,0) circle (.1);
    \fill (1.5,0) circle (.1);
    \draw (.5,0) to [out=45, in=135] node[above] {$a$} (1,0);
    \end{tikzpicture}\bigg) \\
    & = \kappa\begin{tikzpicture}[baseline={([yshift=1ex]current bounding box.center)}]
	\fill (0,0) circle (.1);
    \fill (.5,0) circle (.1);
    \end{tikzpicture} \otimes
    \kappa\begin{tikzpicture}[baseline={([yshift=1ex]current bounding box.center)}]
	\fill (0,0) circle (.1);
    \fill (.5,0) circle (.1);
    \draw (0,0) to [out=45, in=135] node[above] {$a$} (.5,0);
    \end{tikzpicture} +
    \sum_{b \in \mathbb{F}_q^\times}
    \kappa\begin{tikzpicture}[baseline={([yshift=1ex]current bounding box.center)}]
	\fill (0,0) circle (.1);
    \fill (.5,0) circle (.1);
    \draw (0,0) to [out=45, in=135] node[above] {$b$} (.5,0);
    \end{tikzpicture} \otimes
    \kappa\begin{tikzpicture}[baseline={([yshift=1ex]current bounding box.center)}]
	\fill (0,0) circle (.1);
    \fill (.5,0) circle (.1);
    \draw (0,0) to [out=45, in=135] node[above] {$a$} (.5,0);
    \end{tikzpicture} \\
    & = p\begin{tikzpicture}[baseline={([yshift=1ex]current bounding box.center)}]
	\fill (0,0) circle (.1);
    \fill (.5,0) circle (.1);
    \end{tikzpicture} \otimes
    p\begin{tikzpicture}[baseline={([yshift=1ex]current bounding box.center)}]
	\fill (0,0) circle (.1);
    \fill (.5,0) circle (.1);
        \draw (0,0) to [out=45, in=135] node[above] {$a$} (.5,0);
    \end{tikzpicture},
\end{align*}
which is the behavior we might hope for. On the other hand, we have
\begin{align*}
        \Delta_{\{1,4\},\{2,3\}}\bigg(p\begin{tikzpicture}[baseline={([yshift=1ex]current bounding box.center)}]
	\fill (0,0) circle (.1);
    \fill (.5,0) circle (.1);
    \fill (1,0) circle (.1);
    \fill (1.5,0) circle (.1);
    \draw (0,0) to [out=45, in=135] node[above] {$a$} (1.5,0);
    \end{tikzpicture}\bigg)
    &= \Delta_{\{1,4\},\{2,3\}}\bigg(\kappa\begin{tikzpicture}[baseline={([yshift=1ex]current bounding box.center)}]
	\fill (0,0) circle (.1);
    \fill (.5,0) circle (.1);
    \fill (1,0) circle (.1);
    \fill (1.5,0) circle (.1);
    \draw (0,0) to [out=45, in=135] node[above] {$a$} (1.5,0);
    \end{tikzpicture}\bigg) \\
    & = \kappa\begin{tikzpicture}[baseline={([yshift=1ex]current bounding box.center)}]
	\fill (0,0) circle (.1);
    \fill (.5,0) circle (.1);
    \draw (0,0) to [out=45, in=135] node[above] {$a$} (.5,0);
    \end{tikzpicture} \otimes
    \kappa\begin{tikzpicture}[baseline={([yshift=1ex]current bounding box.center)}]
	\fill (0,0) circle (.1);
    \fill (.5,0) circle (.1);
    \end{tikzpicture} \\
    & = p\begin{tikzpicture}[baseline={([yshift=1ex]current bounding box.center)}]
	\fill (0,0) circle (.1);
    \fill (.5,0) circle (.1);
    \draw (0,0) to [out=45, in=135] node[above] {$a$} (.5,0);
    \end{tikzpicture} \otimes
    p\begin{tikzpicture}[baseline={([yshift=1ex]current bounding box.center)}]
	\fill (0,0) circle (.1);
    \fill (.5,0) circle (.1);
    \end{tikzpicture} - \sum_{b \in \mathbb{F}_q^\times}
    p\begin{tikzpicture}[baseline={([yshift=1ex]current bounding box.center)}]
	\fill (0,0) circle (.1);
    \fill (.5,0) circle (.1);
    \draw (0,0) to [out=45, in=135] node[above] {$a$} (.5,0);
    \end{tikzpicture} \otimes
    p\begin{tikzpicture}[baseline={([yshift=1ex]current bounding box.center)}]
	\fill (0,0) circle (.1);
    \fill (.5,0) circle (.1);
    \draw (0,0) to [out=45, in=135] node[above] {$b$} (.5,0);
    \end{tikzpicture},
\end{align*}
and the coproduct on the power sum basis displays some asymmetry.
\end{example}

\subsection{The supercharacter basis}

If $\mathcal{P}$ is a linear order on $I$, the supercharacters of $U_\mathcal{P}$ in the algebra group supercharacter theory are indexed by the $(\mathbb{F}_q,\mathcal{P})$-set partitions. The set $\{\chi_\eta \mid \eta \in \Pi(\mathcal{P},q)\}$ is the \emph{supercharacter basis} of $\mathbf{scf}(U_\mathcal{P})$. The product of $(\mathbf{scf}(U),\mu,\Delta)$ has a nice combinatorial description with respect to this basis.

\begin{proposition}[{\cite[Section~2.3]{MR2880223}}] Let $\mathcal{P}$ and $\mathcal{Q}$ be linear orders on disjoint sets $S$ and $T$, respectively. If $\eta\in \Pi(\mathcal{P},q)$ and $\nu\in\Pi(\mathcal{Q},q)$, we have
\[
        \mu_{\mathcal{P},\mathcal{Q}}(\chi_\eta \otimes \chi_\nu) = \chi_{\eta\sqcup\nu}.
\]
\end{proposition}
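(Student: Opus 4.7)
The plan is to evaluate both sides at an arbitrary $g \in U_{\mathcal{P}\cdot\mathcal{Q}}$ and match the orbit sums from Theorem~\ref{alggpsct}. By definition, $\mu_{\mathcal{P},\mathcal{Q}}(\chi_\eta \otimes \chi_\nu)(g) = \chi_\eta(g_S)\chi_\nu(g_T)$, which expands into a double orbit sum
\[
\sum_{\mu_S \in U_\mathcal{P} \lambda_\eta U_\mathcal{P}} \sum_{\mu_T \in U_\mathcal{Q} \lambda_\nu U_\mathcal{Q}} \theta\bigl(\mu_S(f(g_S)) + \mu_T(f(g_T))\bigr).
\]
On the other side, $\chi_{\eta\sqcup\nu}(g)$ is a sum of $\theta(\mu(f(g)))$ over the biorbit $U_{\mathcal{P}\cdot\mathcal{Q}}\lambda_{\eta\sqcup\nu}U_{\mathcal{P}\cdot\mathcal{Q}}$, so it suffices to produce a bijection between this biorbit and the Cartesian product of the two smaller biorbits that is compatible with evaluation at $f(g)$.

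The key structural observation is that $U_{\mathcal{P}\cdot\mathcal{Q}}$ is a semidirect product $(U_\mathcal{P} \times U_\mathcal{Q}) \ltimes N$, where $N$ is the normal subgroup of matrices of the form $1+y$ whose only nonzero off-diagonal entries lie in the $S\times T$ block. Writing $\frak{n}$ for the corresponding subspace of $\frak{u}_{\mathcal{P}\cdot\mathcal{Q}}$, the functional $\lambda_{\eta\sqcup\nu}$ vanishes on $\frak{n}$ because every arc of $\eta\sqcup\nu$ lies in $S\times S$ or $T\times T$. Since $\mathcal{P}\cdot\mathcal{Q}$ places all of $S$ entirely before $T$, for $y \in \frak{n}$ and $x \in \frak{u}_{\mathcal{P}\cdot\mathcal{Q}}$ the products $yx$ and $xy$ are again supported in the $S\times T$ block, hence lie in $\frak{n}$. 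It follows immediately that for $n = 1+y \in N$,
\[
(n\lambda_{\eta\sqcup\nu})(x) = \lambda_{\eta\sqcup\nu}(x-yx) = \lambda_{\eta\sqcup\nu}(x),
\]
and symmetrically $\lambda_{\eta\sqcup\nu}n = \lambda_{\eta\sqcup\nu}$; that is, $N$ fixes $\lambda_{\eta\sqcup\nu}$ under both the left and right actions.

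Combining this with the factorizations $U_{\mathcal{P}\cdot\mathcal{Q}} = (U_\mathcal{P}\times U_\mathcal{Q})N = N(U_\mathcal{P}\times U_\mathcal{Q})$ gives
\[
U_{\mathcal{P}\cdot\mathcal{Q}}\,\lambda_{\eta\sqcup\nu}\,U_{\mathcal{P}\cdot\mathcal{Q}} = (U_\mathcal{P}\times U_\mathcal{Q})\,\lambda_{\eta\sqcup\nu}\,(U_\mathcal{P}\times U_\mathcal{Q}),
\]
and every functional $\mu$ in this biorbit still vanishes on $\frak{n}$. Restricting $\mu$ to $\frak{u}_\mathcal{P}$ and $\frak{u}_\mathcal{Q}$ gives a bijection with $(U_\mathcal{P}\lambda_\eta U_\mathcal{P})\times(U_\mathcal{Q}\lambda_\nu U_\mathcal{Q})$, because the two block factors of $U_\mathcal{P}\times U_\mathcal{Q}$ act independently on the two pieces and $\lambda_{\eta\sqcup\nu}$ itself restricts to $\lambda_\eta$ and $\lambda_\nu$. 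Finally, $f(g) = g-1$ has diagonal blocks $f(g_S)$ and $f(g_T)$ together with an $S\times T$ block lying in $\frak{n}$, so any $\mu$ that vanishes on $\frak{n}$ satisfies $\mu(f(g)) = \mu|_{\frak{u}_\mathcal{P}}(f(g_S)) + \mu|_{\frak{u}_\mathcal{Q}}(f(g_T))$. Substituting this into the orbit sum for $\chi_{\eta\sqcup\nu}(g)$ reproduces the double sum displayed above.

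The main obstacle is establishing that $N$ fixes $\lambda_{\eta\sqcup\nu}$ under both actions; once this is secured, the rest is straightforward bookkeeping with block-upper-triangular matrices. The property that makes it work is $\frak{n}\cdot\frak{u}_{\mathcal{P}\cdot\mathcal{Q}}, \frak{u}_{\mathcal{P}\cdot\mathcal{Q}}\cdot\frak{n}\subseteq\frak{n}$, which uses crucially that $S$ and $T$ are separated by the order in $\mathcal{P}\cdot\mathcal{Q}$.
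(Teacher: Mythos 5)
Your proof is correct, but it is worth noting that the paper never proves this proposition itself: it is imported from \cite[Section~2.3]{MR2880223}, and the argument the author actually writes out is for the nonnesting analogue, where both sides are evaluated on an arbitrary superclass $K_\rho$ and compared using the explicit supercharacter formula (Proposition~\ref{anotherprop}) together with multiplicativity of the degrees (Corollary~\ref{nnpdim}). Your route is genuinely different and self-contained: you return to the Diaconis--Isaacs definition of $\chi_\lambda$ as a sum over the two-sided orbit of $\lambda_{\eta\sqcup\nu}$, decompose $U_{\mathcal{P}\cdot\mathcal{Q}}=(U_\mathcal{P}\times U_\mathcal{Q})\ltimes N$ with $\frak{n}$ the $S\times T$ block, use that $\frak{n}$ is a two-sided ideal annihilated by $\lambda_{\eta\sqcup\nu}$ to see that $N$ fixes $\lambda_{\eta\sqcup\nu}$ under both actions, conclude $U_{\mathcal{P}\cdot\mathcal{Q}}\lambda_{\eta\sqcup\nu}U_{\mathcal{P}\cdot\mathcal{Q}}=(U_\mathcal{P}\times U_\mathcal{Q})\lambda_{\eta\sqcup\nu}(U_\mathcal{P}\times U_\mathcal{Q})$, and identify this orbit with the product of the two smaller orbits via restriction, after which the orbit sums match termwise. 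The steps all check out; two points deserve to be made explicit when writing it up: injectivity of the restriction map $\mu\mapsto(\mu|_{\frak{u}_\mathcal{P}},\mu|_{\frak{u}_\mathcal{Q}})$ uses precisely that every functional in the $(U_\mathcal{P}\times U_\mathcal{Q})$-biorbit kills $\frak{n}$ (which follows from the same ideal property you isolate), and the factorization of a general $u\lambda v$ should be done with $u$ written in $HN$ and $v$ in $NH$ so the inner $N$-factors act directly on $\lambda_{\eta\sqcup\nu}$. As for what each approach buys: your argument needs no character-table computation and makes transparent, at the level of orbits of functionals, why inflating a tensor product of supercharacters yields a single supercharacter; the paper's method (for the nonnesting version) is shorter once the character and degree formulas are in hand, and it is the one that carries over to the coarser nonnesting theory, where a supercharacter is a sum over several biorbits rather than one.
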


If $\mathcal{P}$ is any poset on $I$, the supercharacters of $U_\mathcal{P}$ in the nonnesting supercharacter theory are indexed by the nonnesting $(\mathbb{F}_q,\mathcal{P})$-set partitions. The set $\{\chi_\eta \mid \eta \in \text{NN}(\mathcal{P},q)\}$ is the supercharacter basis of $\mathbf{nnfp}(U_\mathcal{P})$, and with respect to this basis the product of $(\mathbf{nnfp},\mu,\Delta)$ has a nice combinatorial description.

\begin{proposition} Let $\mathcal{P}$ and $\mathcal{Q}$ be posets on disjoint sets $S$ and $T$, respectively. If $\eta\in \text{NN}(\mathcal{P},q)$ and $\nu\in \text{NN}(\mathcal{Q},q)$, we have
\[
        \mu_{\mathcal{P},\mathcal{Q}}(\chi_\eta \otimes \chi_\nu) = \chi_{\eta\sqcup\nu}.
\]
\end{proposition}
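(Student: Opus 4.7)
The plan is to check equality by evaluating both sides on an arbitrary group element $g \in U_{\mathcal{P} \cdot \mathcal{Q}}$ and comparing using the formula from Proposition~\ref{anotherprop}. First I would verify that $\eta \sqcup \nu$ is in fact nonnesting as an $(\mathbb{F}_q, \mathcal{P} \cdot \mathcal{Q})$-set partition. The key observation here is that in $\mathcal{P} \cdot \mathcal{Q}$, every element of $S$ is below every element of $T$, so if $i \frown j$ and $k \frown l$ are arcs in $\eta \sqcup \nu$ with $i \prec_{\mathcal{P} \cdot \mathcal{Q}} k \prec_{\mathcal{P} \cdot \mathcal{Q}} l \prec_{\mathcal{P} \cdot \mathcal{Q}} j$, the chain forces all four endpoints to lie entirely in $S$ or entirely in $T$, contradicting the nonnestingness of $\eta$ or $\nu$.

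Second, unpacking the definition of $\mu_{\mathcal{P},\mathcal{Q}}$, we have $(\mu_{\mathcal{P},\mathcal{Q}}(\chi_\eta \otimes \chi_\nu))(g) = \chi_\eta(g_S)\chi_\nu(g_T)$, where $(g_S, g_T) = \pi_{\mathcal{P},\mathcal{Q}}(g)$. Writing $\rho = \text{sml}(g)$, one checks immediately from the definition of $\text{sml}$ that $\text{sml}(g_S) = \rho|_S$ and $\text{sml}(g_T) = \rho|_T$, since the defining conditions on $g_S$ only involve indices inside $S$ (and similarly for $T$). So $g_S \in K_{\rho|_S}$ and $g_T \in K_{\rho|_T}$, placing us in the setting of Proposition~\ref{anotherprop} for each factor.

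Third, apply Proposition~\ref{anotherprop} on both sides. For the right-hand side $\chi_{\eta \sqcup \nu}(g)$, the nonvanishing condition asks for no arc $i \frown j \in \eta \sqcup \nu$ and $k \frown l \in \rho$ with $(i,j) \neq (k,l)$ and $i \preceq_{\mathcal{P} \cdot \mathcal{Q}} k \prec_{\mathcal{P} \cdot \mathcal{Q}} l \preceq_{\mathcal{P} \cdot \mathcal{Q}} j$. Again because $S$ sits below $T$ in the product poset, any such interval $[i,j]$ is entirely contained in one side, and the nested arc $k \frown l$ must live on the same side; this splits the condition cleanly into the two conditions for $\chi_\eta(g_S)$ and $\chi_\nu(g_T)$ nonvanishing. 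The same splitting argument handles the product $\prod_{i \overset{a}{\frown} j \in \eta \sqcup \nu,\, i \overset{b}{\frown} j \in \rho} \theta(ab)$, which factors as the corresponding products for $\eta, \rho|_S$ and for $\nu, \rho|_T$.

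Fourth, we need the multiplicativity of dimensions, $\chi_{\eta \sqcup \nu}(1) = \chi_\eta(1)\chi_\nu(1)$. Corollary~\ref{nnpdim} expresses these as $q$ to the number of pairs $(i,j)$ strictly contained in an arc. The same poset structure argument shows that any such pair $(i,j)$ nested inside an arc of $\eta \sqcup \nu$ must be entirely in $S$ or entirely in $T$, so the counting set partitions into the corresponding sets for $\eta$ and $\nu$. The one point requiring a moment of care — and the only place things could potentially go wrong — is keeping the nesting-versus-containment quantifiers straight when transferring from the product poset $\mathcal{P} \cdot \mathcal{Q}$ back to $\mathcal{P}$ and $\mathcal{Q}$ separately, but once the observation that $S$-elements lie uniformly below $T$-elements is deployed, all three matching steps (nonvanishing criterion, $\theta$-product, dimension) reduce to clean componentwise statements.
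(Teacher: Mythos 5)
Your proposal is correct and follows essentially the same route as the paper: evaluate both sides on $g \in K_\rho$, use $\pi_{\mathcal{P},\mathcal{Q}}(g) \in K_{\rho|_S} \times K_{\rho|_T}$, apply Proposition~\ref{anotherprop} and split the nonvanishing condition and $\theta$-product across $S$ and $T$, then finish with $\chi_\eta(1)\chi_\nu(1) = \chi_{\eta\sqcup\nu}(1)$ from Corollary~\ref{nnpdim}. Your explicit checks that $\eta \sqcup \nu$ is nonnesting and that intervals in $\mathcal{P}\cdot\mathcal{Q}$ with endpoints in one factor stay in that factor are details the paper records elsewhere or leaves implicit, but they are the same underlying observations.
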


\begin{proof} Let $g \in U_{\mathcal{P}\cdot\mathcal{Q}}$ be in the superclass $K_\rho$. Then we have
\[
        \mu_{\mathcal{P},\mathcal{Q}}(\chi_\eta \otimes \chi_\nu)(g) = (\chi_\eta,\chi_\nu)(\pi_{\mathcal{P},\mathcal{Q}}(g)).
\]
Note that $\pi_{\mathcal{P},\mathcal{Q}}(g) \in K_{\rho|_S} \times K_{\rho|_T}$, hence for $h \in K_{\rho|_S}$ and $k \in K_{\rho|_T}$,
\[
        \mu_{\mathcal{P},\mathcal{Q}}(\chi_\eta \otimes \chi_\nu)(g) = \chi_\eta (h) \chi_\nu (k).
\]
By Proposition~\ref{anotherprop},
\begin{align*}
        &\mu_{\mathcal{P},\mathcal{Q}}(\chi_\eta \otimes \chi_\nu)(g)\\ 
        &\hspace{.2in} = \left\{\begin{array}{ll}
        \chi_\eta(1)\chi_\nu(1)\displaystyle\prod_{\substack{i\overset{a}{\frown}j \in \eta\sqcup\nu \\
        i \overset{b}{\frown}{j} \in \rho}} \theta(ab)   & \begin{array}{l} \text{if there are no } i \overset{a}{\frown}j \in \eta \text{ and } k\overset{b}{\frown}l \in \rho|_S \\ \text{with } (i,j) \neq (k,l) \text{ and }
        i \preceq_\mathcal{P} k\prec_\mathcal{P}l\preceq_\mathcal{P} j, \\ \text{and no } i \overset{a}{\frown}j \in \nu \text{ and } k\overset{b}{\frown}l \in \rho|_T \\ \text{with } (i,j) \neq (k,l) \text{ and }
        i \preceq_\mathcal{P} k\prec_\mathcal{P}l\preceq_\mathcal{P} j,\end{array} \\
        0   & \:\;\text{otherwise,}\end{array}\right. \\
        &\hspace{.2in}  = \left\{\begin{array}{ll}
        \chi_\eta(1)\chi_\nu(1)\displaystyle\prod_{\substack{i\overset{a}{\frown}j \in \eta\sqcup\nu \\
        i \overset{b}{\frown}{j} \in \rho}} \theta(ab)   & \begin{array}{l} \text{if there are no } i \overset{a}{\frown}j \in \eta\sqcup\nu \text{ and } k\overset{b}{\frown}l \in \rho \\ \text{with } (i,j) \neq (k,l) \text{ and }
        i \preceq_\mathcal{P} k\prec_\mathcal{P}l\preceq_\mathcal{P} j,\end{array} \\
        0   & \:\;\text{otherwise,}\end{array}\right. \\
        &\hspace{.2in}  = \frac{\chi_\eta(1)\chi_\nu(1)}{\chi_{\eta\sqcup\nu}(1)}\chi_{\eta\sqcup\nu}(g).
\end{align*}
By Corollary~\ref{nnpdim}, $\chi_\eta(1)\chi_\nu(1) = \chi_{\eta\sqcup\nu}(1)$.
\end{proof}

There is no known combinatorial description of the coproduct of $(\mathbf{scf}(U),\mu,\Delta)$ in terms of the supercharacter basis. This is related to the fact that the restriction of an algebra group supercharacter of $UT_n(\mathbb{F}_q)$ to a smaller unitriangular matrix group is difficult to decompose into supercharacters. Some rules for restricting supercharacters can be found in \cite{MR2592079,MR2482091}. In order to describe the coproduct of $(\mathbf{nnfp},\mu,\Delta)$ in terms of the supercharacter basis, we need the following definition.

\begin{definition}
Let $I$ be a finite set and let $\mathcal{P}$ be a poset on $I$. Given a subset $S \subseteq I$ and a nonnesting $(\mathbb{F}_q,\mathcal{P})$-set partition $\eta$, define the \emph{projection of} $\eta$ \emph{from} $I$ \emph{to} $S$, denoted $\text{proj}_S^I(\eta)$, to be the set of nonnesting $(\mathbb{F}_q,\mathcal{P}|_S)$-set partitions $\nu$ such that
\begin{enumerate}
\item if $i \overset{a}{\frown}j \in \eta$ and $i,j \in S$, then $i \overset{a}{\frown}j \in \nu$; and
\item if $i \overset{a}{\frown}j \in \nu$, then there exist $k,l \in I$ with $k\overset{b}{\frown}l \in \eta$ and $k \preceq_\mathcal{P} i \prec_\mathcal{P} j \preceq_\mathcal{P} l$.
\end{enumerate}
\end{definition}
In other words, $\text{proj}_S^I(\eta)$ consists of the nonnesting $(\mathbb{F}_q,\mathcal{P}|_S)$-set partitions that contain all of the arcs of $\eta|_S$ and perhaps some other arcs that were prohibited by arcs of $\eta$.

\begin{example}
Let $I = \{1,2,3,4,5\}$, $\mathcal{P}$ be the usual linear order, $S = \{1,2,3,4\}$, and
\[
        \eta = \begin{tikzpicture}
	\fill (0,0) circle (.1);
    \fill (.5,0) circle (.1);
    \fill (1,0) circle (.1);
    \fill (1.5,0) circle (.1);
    \fill (2,0) circle (.1);
    \draw (0,0) to [out=45, in=135] node[above] {$a$} (.5,0);
    \draw (1,0) to [out=45, in=135] node[above] {$b$} (2,0);
    \end{tikzpicture};
\]
then
\[
    \text{proj}_S^I(\eta) = \{\begin{tikzpicture}
	\fill (0,0) circle (.1);
    \fill (.5,0) circle (.1);
    \fill (1,0) circle (.1);
    \fill (1.5,0) circle (.1);
    \draw (0,0) to [out=45, in=135] node[above] {$a$} (.5,0);
    \end{tikzpicture}\}
    \cup \{\begin{tikzpicture}
	\fill (0,0) circle (.1);
    \fill (.5,0) circle (.1);
    \fill (1,0) circle (.1);
    \fill (1.5,0) circle (.1);
    \draw (0,0) to [out=45, in=135] node[above] {$a$} (.5,0);
    \draw (1,0) to [out=45, in=135] node[above] {$c$} (1.5,0);
    \end{tikzpicture} \mid c \in \mathbb{F}_q^\times\}.
\]
\end{example}

Define
\[
        U_{\eta,S} = \{u \in U_{\mathcal{P}|_S} \mid  \sigma_{\mathcal{P}|_S,\mathcal{P}|_T}(u,1) \in U_\eta\},
\]
where $U_\eta$ is as in Lemma~\ref{ueta}. Note that $U_{\eta,S} \subseteq U_{\eta|_S}$, and in general the two are not the same group. Let $\frak{u}_{\eta,S} = f(U_{\eta,S})$, and note that this is a subalgebra of $\frak{u}_{\mathcal{P}|_S}$.

\bigbreak

We can describe the coproduct of $(\mathbf{nnfp},\mu,\Delta)$ using the projection of nonnesting $(\mathbb{F}_q,\mathcal{P})$-set partitions.

\begin{proposition} Let $I=S \sqcup T$ be a finite set and let $\mathcal{P}$ be a poset on $I$. If $\eta$ is a nonnesting $(\mathbb{F}_q,\mathcal{P})$-set partition, then
\[
        \Delta_{\mathcal{P}|_S,\mathcal{P}|_T}(\chi_\eta)
        = \frac{|U_\mathcal{P}:U_\eta|}{|U_{\mathcal{P}|_S}:U_{\eta,S}| |U_{\mathcal{P}|_T}:U_{\eta,T}|}\left(\sum_{\nu \in \textup{proj}_S^I(\eta)}\chi_\nu\right) \otimes \left(\sum_{\rho \in \textup{proj}_T^I(\eta)}\chi_\rho\right).
\]
\end{proposition}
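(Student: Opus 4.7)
The plan is to prove the identity by evaluating both sides as functions on $U_{\mathcal{P}|_S}\times U_{\mathcal{P}|_T}$ and matching them pointwise. Writing $\sigma=\sigma_{\mathcal{P}|_S,\mathcal{P}|_T}$, the left-hand side of the proposition at $(g,h)$ is $\chi_\eta(\sigma(g,h))$. By Lemma~\ref{ueta}, $\chi_\eta(x)=|U_\mathcal{P}:U_\eta|\,\theta(\lambda_\eta(f(x)))$ for $x\in U_\eta$ and is zero otherwise. I would first check that $\sigma(g,h)\in U_\eta$ if and only if $g\in U_{\eta,S}$ and $h\in U_{\eta,T}$: the mixed-block entries of $\sigma(g,h)$ vanish automatically, so the constraints defining $U_\eta$ reduce exactly to those defining $U_{\eta,S}$ and $U_{\eta,T}$ on each diagonal block. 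Since mixed arcs of $\eta$ also contribute trivially to $\lambda_\eta(f(\sigma(g,h)))$, I obtain
\[
\chi_\eta(\sigma(g,h))=|U_\mathcal{P}:U_\eta|\cdot\theta(\lambda_{\eta|_S}(f(g)))\cdot\theta(\lambda_{\eta|_T}(f(h)))
\]
whenever $g\in U_{\eta,S}$ and $h\in U_{\eta,T}$, and zero otherwise.

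For the right-hand side I use $\chi_\nu=\sum_{\mu_S:\,\textup{big}(\mu_S)=\nu}\theta\circ\mu_S\circ f$ (the $U_{\mathcal{P}|_S}$-analogue of the expansion in Lemma~\ref{ueta}), giving
\[
\sum_{\nu\in\textup{proj}_S^I(\eta)}\chi_\nu(g)=\sum_{\mu_S\in W_S}\theta(\mu_S(f(g))),\qquad W_S=\{\mu_S\in\frak{u}_{\mathcal{P}|_S}^*:\textup{big}(\mu_S)\in\textup{proj}_S^I(\eta)\}.
\]
The central claim is that $W_S$ is exactly the affine coset $\lambda_{\eta|_S}+\frak{u}_{\eta,S}^\perp$. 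The inclusion $W_S\subseteq \lambda_{\eta|_S}+\frak{u}_{\eta,S}^\perp$ is immediate from the two conditions defining $\textup{proj}_S^I(\eta)$: condition (1) pins $\mu_S(e_{ij})=a$ on every $S$-arc $i\overset{a}{\frown}j$ of $\eta$, while any other position in $\textup{supp}(\mu_S)$ lies below a maximal one, which by condition (2) must sit inside some arc of $\eta$, so $\mu_S$ vanishes on all $S$-positions outside $C(\eta)$ (and outside $\eta|_S$).

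The reverse inclusion is where the nonnesting hypothesis is essential. Given $\mu_S\in\lambda_{\eta|_S}+\frak{u}_{\eta,S}^\perp$, I need to verify that each $S$-arc $(i,j)$ of $\eta$ remains maximal in $\textup{supp}(\mu_S)$. If not, there is an $S$-position $(k,l)\neq(i,j)$ with $k\preceq_\mathcal{P} i\prec_\mathcal{P} j\preceq_\mathcal{P} l$ and $\mu_S(e_{kl})\neq 0$, so $(k,l)$ is either an $S$-arc of $\eta$ or lies in $C_S(\eta)$. In either case I can produce an arc $(p,q)$ of $\eta$ with $p\preceq i\prec j\preceq q$ and $(p,q)\neq(i,j)$; the nonnesting of $\eta$ excludes the case $p\prec i$ and $j\prec q$, while the set-partition adjacency axiom (condition (2) in the definition of a $(\mathbb{F}_q,\mathcal{P})$-set partition) excludes the boundary cases $p=i$, $j\prec q$ and $p\prec i$, $j=q$, a contradiction in every case. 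So each $S$-arc of $\eta$ is maximal, and (since $\textup{big}(\mu_S)$ is automatically a nonnesting set partition) condition (2) of $\textup{proj}_S^I(\eta)$ follows from $\textup{supp}(\mu_S)\subseteq\eta|_S\cup C_S(\eta)$.

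With $W_S=\lambda_{\eta|_S}+\frak{u}_{\eta,S}^\perp$ established, standard Fourier orthogonality on the finite abelian group $\frak{u}_{\eta,S}^\perp$ yields
\[
\sum_{\mu_S\in\lambda_{\eta|_S}+\frak{u}_{\eta,S}^\perp}\theta(\mu_S(f(g)))=\begin{cases}|U_{\mathcal{P}|_S}:U_{\eta,S}|\cdot\theta(\lambda_{\eta|_S}(f(g)))&\text{if }g\in U_{\eta,S},\\ 0&\text{otherwise,}\end{cases}
\]
using that $(\frak{u}_{\eta,S}^\perp)^\perp=\frak{u}_{\eta,S}$ and $f(g)\in\frak{u}_{\eta,S}\Leftrightarrow g\in U_{\eta,S}$. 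The symmetric identity holds for $T$. Multiplying these two sums and the prefactor $|U_\mathcal{P}:U_\eta|/(|U_{\mathcal{P}|_S}:U_{\eta,S}|\,|U_{\mathcal{P}|_T}:U_{\eta,T}|)$ exactly reproduces the expression for $\chi_\eta(\sigma(g,h))$ from the first paragraph. The main obstacle is the middle step: the reverse inclusion for $W_S$ genuinely requires the nonnesting hypothesis, and the chain of arc-chasing that rules out the pathological $(k,l)$ is the only place where the combinatorial structure of $\eta$ does real work in the proof.
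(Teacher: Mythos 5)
Your proof is correct, and it reaches the identity by a somewhat more direct route than the paper. The paper evaluates both sides pointwise on superclasses: it applies the nonnesting supercharacter table (Proposition~\ref{anotherprop}) to $\chi_\eta$ at an element of $K_{\text{sml}(\nu\sqcup\rho)}$, argues combinatorially that $\text{sml}(\nu\sqcup\rho)$ may be replaced by $\nu\sqcup\rho$ in the vanishing condition and in the product of $\theta$-values, and then recognizes each tensor factor as $\text{Ind}_{U_{\eta,S}}^{U_{\mathcal{P}|_S}}(\theta\circ\lambda_S\circ f)$ via the induction formula of \cite[Lemma~4.5]{andrews1}, finally identifying that induced character with $\sum_{\nu\in\text{proj}_S^I(\eta)}\chi_\nu$ through the statement that $\text{big}(\mu)\in\text{proj}_S^I(\eta)$ iff $\mu|_{\frak{u}_{\eta,S}}=\lambda_S$. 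You instead bypass the superclass bookkeeping and the $\text{sml}(\nu\sqcup\rho)$ reduction entirely: you use Lemma~\ref{ueta} to see that $\chi_\eta$ is supported on $U_\eta$ with value $|U_\mathcal{P}:U_\eta|\,\theta(\lambda_\eta(f(\cdot)))$ there, observe that $\sigma_{\mathcal{P}|_S,\mathcal{P}|_T}(g,h)\in U_\eta$ exactly when $g\in U_{\eta,S}$ and $h\in U_{\eta,T}$ (correct, since the mixed entries of $\sigma(g,h)$ vanish and the defining constraints of $U_\eta$ act coordinatewise), and evaluate the right-hand side as an exponential sum over the affine coset $\lambda_{\eta|_S}+\frak{u}_{\eta,S}^\perp$ by orthogonality — which is the same content as the cited induction lemma. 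Both arguments ultimately rest on the same key identification $\{\mu : \text{big}(\mu)\in\text{proj}_S^I(\eta)\}=\{\mu : \mu|_{\frak{u}_{\eta,S}}=\lambda_S\}$; the added value of your write-up is that you actually prove both inclusions by arc-chasing (the paper asserts the equivalence without proof), and you correctly isolate that the reverse inclusion is where the nonnesting hypothesis, together with the set-partition axiom forbidding arcs sharing an endpoint, does the real work. Two cosmetic points: your symbol $C(\eta)$ clashes with the paper's crossing set — what you mean is the set of $S$-positions strictly nested under an arc of $\eta$, i.e.\ the complement of the coordinates of $\frak{u}_{\eta,S}$ — and in the arc-chasing the subcase in which the containing arc $(p,q)$ equals $(i,j)$ does not produce a second arc but is immediately vacuous, since it forces $(k,l)=(i,j)$; worth a half-sentence each, but neither affects correctness.
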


\begin{proof} Let $(g,h) \in U_{\mathcal{P}|_S}\times U_{\mathcal{P}|_T}$, with $g \in K_\nu$ and $h \in K_\rho$; then
\[
        \sigma_{\mathcal{P}|_S,\mathcal{P}|_T}(g,h) \in K_{\text{sml}(\nu\sqcup\rho)}.
\]
It follows that if $k \in K_{\text{sml}(\nu\sqcup\rho)}$, we have
\[
        \varphi(\Delta_{\mathcal{P}|_S,\mathcal{P}|_T}(\chi_\eta))((g,h)) = \chi_\eta(k).
\]
By Proposition~\ref{anotherprop},
\begin{align*}
        &\varphi(\Delta_{\mathcal{P}|_S,\mathcal{P}|_T}(\chi_\eta))((g,h)) \\
        &\hspace{.2in} = \left\{\begin{array}{ll}
        |U_\mathcal{P}:U_\eta|\displaystyle\prod_{\substack{i\overset{a}{\frown}j \in \eta \\
        i \overset{b}{\frown}{j} \in \text{sml}(\nu\sqcup\rho)}} \theta(ab) & \quad \begin{array}{l} \text{if there are no } i \overset{a}{\frown}j \in \eta \text{ and} \\ k\overset{b}{\frown}l \in \text{sml}(\nu\sqcup\rho) \text{ with} \\ (i,j) \neq (k,l) \text{ and }
        i \preceq_\mathcal{P} k\prec_\mathcal{P}l\preceq_\mathcal{P} j,\end{array} \\
        0 & \quad \text{otherwise.}\end{array}\right.
\end{align*}
The condition ``there are no $i \overset{a}{\frown}j \in \eta$ and $k\overset{b}{\frown}l \in \text{sml}(\nu\sqcup\rho)$ with $(i,j) \neq (k,l)$ and $i \preceq_\mathcal{P} k\prec_\mathcal{P}l\preceq_\mathcal{P} j$'' is equivalent to the condition ``there are no $i \overset{a}{\frown}j \in \eta$ and $k\overset{b}{\frown}l \in \nu\sqcup\rho$ with $(i,j) \neq (k,l)$ and $i \preceq_\mathcal{P} k\prec_\mathcal{P}l\preceq_\mathcal{P} j$.'' This is a consequence of the definition of $\text{sml}(\nu\sqcup\rho)$. Furthermore, if there are no $i \overset{a}{\frown}j \in \eta$ and $k\overset{b}{\frown}l \in \nu\sqcup\rho$ with $(i,j) \neq (k,l)$ and $i \preceq_\mathcal{P} k\prec_\mathcal{P}l\preceq_\mathcal{P} j$, then $i \overset{a}{\frown}j \in \eta$ and $i \overset{b}{\frown}j \in \text{sml}(\nu\sqcup\rho)$ if and only if $i \overset{a}{\frown}j \in \eta$ and $i \overset{b}{\frown}j \in \nu\sqcup\rho$. We now have that
\begin{align*}
        &\varphi(\Delta_{\mathcal{P}|_S,\mathcal{P}|_T}(\chi_\eta))((g,h)) \\
        &\hspace{.2in} = \left\{\begin{array}{ll}
        |U_\mathcal{P}:U_\eta|\displaystyle\prod_{\substack{i\overset{a}{\frown}j \in \eta \\
        i \overset{b}{\frown}{j} \in \nu\sqcup\rho}} \theta(ab) & \quad \begin{array}{l} \text{if there are no } i \overset{a}{\frown}j \in \eta \text{ and} \\ k\overset{b}{\frown}l \in \nu\sqcup\rho \text{ with} \\ (i,j) \neq (k,l) \text{ and }
        i \preceq_\mathcal{P} k\prec_\mathcal{P}l\preceq_\mathcal{P} j,\end{array} \\
        0 & \quad \text{otherwise.}\end{array}\right.
\end{align*}
Let $\lambda_S \in \frak{u}_{\eta,S}^*$ be the functional defined by
\[
        \lambda_S(x) = \sum_{i \overset{a_{ij}}{\frown} j \in \eta|_S} a_{ij}x_{ij}
\]
for any $x \in \frak{u}_{\eta,S}$. Note that $\theta \circ \lambda_S \circ f$ is a $U_{\mathcal{P}|_S}$-invariant linear character of $U_{\eta,S}$, hence
\begin{align*}
        &\text{Ind}_{U_{\eta,S}}^{U_{\mathcal{P}|_S}}(\theta \circ \lambda_S \circ f)(g) \\
        &\hspace{.2in}  =
        \left\{\begin{array}{ll}
        |U_{\mathcal{P}|_S}:U_{\eta,S}|\displaystyle\prod_{\substack{i\overset{a}{\frown}j \in \eta \\
        i \overset{b}{\frown}{j} \in \nu}} \theta(ab) & \quad \begin{array}{l} \text{if there are no } i \overset{a}{\frown}j \in \eta \text{ and } k\overset{b}{\frown}l \in \nu \\ \text{with } (i,j) \neq (k,l) \text{ and }
        i \preceq_\mathcal{P} k\prec_\mathcal{P}l\preceq_\mathcal{P} j,\end{array} \\
        0 & \quad \text{otherwise.}\end{array}\right.
\end{align*}
Similarly define $\lambda_T \in \frak{u}_{\eta,T}^*$; we have that
\begin{align*}
        &\varphi(\Delta_{\mathcal{P}|_S,\mathcal{P}|_T}(\chi_\eta))((g,h)) \\
        &\hspace{.2in} = \frac{|U_\mathcal{P}:U_\eta|}{|U_{\mathcal{P}|_S}:U_{\eta,S}| |U_{\mathcal{P}|_T}:U_{\eta,T}|}\text{Ind}_{U_{\eta,S}}^{U_{\mathcal{P}|_S}}(\theta \circ \lambda_S \circ f)(g)
        \text{Ind}_{U_{\eta,T}}^{U_{\mathcal{P}|_T}}(\theta \circ \lambda_T \circ f)(h).
\end{align*}
To complete the proof, we need to show that
\[
        \text{Ind}_{U_{\eta,S}}^{U_{\mathcal{P}|_S}}(\theta \circ \lambda_S \circ f) = \sum_{\nu \in \textup{proj}_S^I(\eta)}\chi_\nu.
\]
By \cite[Lemma~4.5]{andrews1}, we have
\[
        \text{Ind}_{U_{\eta,S}}^{U_{\mathcal{P}|_S}}(\theta \circ \lambda_S \circ f) =
        \sum_{\substack{\mu \in \frak{u}_{\mathcal{P}|_S}^* \\ \mu|_{\frak{u}_{\eta,S}} = \lambda_S}} \theta \circ \mu \circ f.
\]
At the same time, a functional $\mu \in \frak{u}_{\mathcal{P}|_S}^*$ has $\text{big}(\mu) \in \text{proj}_S^I(\eta)$ if and only if $\mu|_{\frak{u}_{\eta,S}} = \lambda_S$. The claim follows from the definition of the supercharacter $\chi_\nu$.
\end{proof}

\subsection{Freeness}\label{subsecfree}

In this section we show that the Hopf monoids $(\mathbf{nnfp},\mu,\Delta)$ and $(\mathbf{nnf}(U),\mu,\Delta)$ are free. This is analogous to the result for $(\mathbf{scf}(U),\mu,\Delta)$ presented in \cite[Section~5]{MR3117506}.

\bigbreak

Let $\mathcal{P}$ be a poset on $I$; an $(\mathbb{F}_q,\mathcal{P})$-set partition $\eta$ is called \emph{atomic} (see  \cite[Section~5]{MR3117506}) if there are no nonempty sets $S$ and $T$ with
\begin{enumerate}
\item $I = S \sqcup T$,
\item $\mathcal{P} = \mathcal{P}|_S \cdot \mathcal{P}|_T$, and
\item $\eta = \eta|_S \sqcup \eta|_T$.
\end{enumerate}
We mention that if $\mathcal{P}$ is a linear order on $I$, this definition reduces to that given in \cite[Section~5]{MR3117506}.

\bigbreak

For $\mathcal{P}$ a poset on $I$, let $\text{D}(\mathcal{P})$ be the set of atomic $(\mathbb{F}_q,\mathcal{P})$-set partitions, and let $\mathbf{d}(\mathcal{P})$ be the vector space with basis $\text{D}(\mathcal{P})$. Define a species $\mathbf{d}$ by
\[
        \mathbf{d}[I] = \bigoplus_{\substack{\mathcal{P}\text{ is a linear} \\\text{order on }I}}\mathbf{d}(\mathcal{P}).
\]
\begin{proposition}[{\cite[Corollary~18]{MR3117506}}] There exists an isomorphism of Hopf monoids $\mathbf{scf}(U) \cong \mathcal{T}(\mathbf{d})$, where $\mathcal{T}(\mathbf{d})$ is the free Hopf monoid on $\mathbf{d}$.
\end{proposition}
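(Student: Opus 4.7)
The plan is to invoke Proposition~\ref{propfree}: since $(\mathbf{scf}(U),\mu,\Delta)$ is a connected, cocommutative Hopf monoid, it suffices to construct an isomorphism of monoids $\mathbf{scf}(U) \cong \mathcal{T}(\mathbf{d})$, with no comparison of coproducts required.

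The main combinatorial ingredient is a unique atomic factorization of $(\mathbb{F}_q,\mathcal{P})$-set partitions when $\mathcal{P}$ is a linear order on $I$. Given $\eta \in \Pi(\mathcal{P},q)$, call $i \in I$ a \emph{cut} of $\eta$ if no arc $j \overset{a}{\frown} k$ of $\eta$ satisfies $j \preceq_\mathcal{P} i \prec_\mathcal{P} k$. The cuts determine a unique decomposition of $I$ into a maximal sequence of consecutive intervals $(S_1,\dots,S_r)$ under $\mathcal{P}$ such that $\mathcal{P} = \mathcal{P}|_{S_1} \cdot \mathcal{P}|_{S_2} \cdots \mathcal{P}|_{S_r}$ and $\eta = \eta|_{S_1} \sqcup \cdots \sqcup \eta|_{S_r}$ with each piece $\eta|_{S_j}$ atomic. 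I would establish this by verifying directly that any splitting $\eta = \eta|_S \sqcup \eta|_T$ compatible with $\mathcal{P} = \mathcal{P}|_S \cdot \mathcal{P}|_T$ corresponds to cutting at the largest element of $S$, and then iterating to obtain the finest decomposition. The map $\eta \mapsto (\eta|_{S_1},\dots,\eta|_{S_r})$ is a bijection between $\Pi(\mathcal{P},q)$ and ordered tuples of atomic pieces along ordered set partitions of $I$ refining $\mathcal{P}$, which, after summing over all linear orders $\mathcal{P}$ of $I$, is precisely the basis of $\mathcal{T}(\mathbf{d})[I]$.

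Define $\Phi:\mathbf{scf}(U) \to \mathcal{T}(\mathbf{d})$ on the supercharacter basis by $\Phi(\chi_\eta) = \eta|_{S_1} \otimes \cdots \otimes \eta|_{S_r}$; this is a species isomorphism by construction and its definition is manifestly natural in bijections of $I$. To verify $\Phi$ is a monoid morphism, I would use the linear-order supercharacter product formula $\mu_{\mathcal{P},\mathcal{Q}}(\chi_\eta \otimes \chi_\nu) = \chi_{\eta \sqcup \nu}$ from \cite[Section~2.3]{MR2880223}. If $\eta$ and $\nu$ have atomic factorizations $(\eta_1,\dots,\eta_k)$ and $(\nu_1,\dots,\nu_m)$ over disjoint $S$ and $T$, then every cut of $\eta$ remains a cut of $\eta \sqcup \nu$, every cut of $\nu$ likewise remains a cut, and the boundary between $S$ and $T$ is trivially a cut since no arc of $\eta \sqcup \nu$ crosses it. Hence the atomic factorization of $\eta \sqcup \nu$ on $\mathcal{P} \cdot \mathcal{Q}$ is the concatenation $(\eta_1,\dots,\eta_k,\nu_1,\dots,\nu_m)$, which matches concatenation in $\mathcal{T}(\mathbf{d})$, so $\Phi$ intertwines the products.

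The principal obstacle is pinning down the uniqueness of the atomic factorization and its compatibility with disjoint union and restriction; this is essentially a careful bookkeeping argument about arc endpoints relative to cuts, but it is the step on which everything else depends. Once it is secured, Proposition~\ref{propfree} upgrades the monoid isomorphism $\Phi$ to a Hopf monoid isomorphism (possibly by a different map), completing the proof.
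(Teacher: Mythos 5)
Your argument is sound, but note that the paper itself offers no proof of this statement: it is quoted verbatim from Aguiar--Bergeron--Thiem \cite[Corollary~18]{MR3117506} as background. What you have written is essentially the strategy the paper deploys later for its own analogue, Proposition~\ref{propnnfree} ($\mathbf{nnf}(U)\cong\mathcal{T}(\mathbf{nnd})$ and $\mathbf{nnfp}\cong\mathcal{T}(\mathbf{nndp})$), with two cosmetic differences. First, the paper builds the map in the opposite direction: it sends an atomic set partition $\eta$ to the power sum element $p_\eta$, extends to a monoid morphism $\mathcal{T}(\mathbf{nndp})\to\mathbf{nnfp}$ by the universal property of the free Hopf monoid (\cite[Theorem~11.4]{MR2724388}), and then proves bijectivity from the unique atomic factorization $\eta=\eta|_{S_1}\sqcup\cdots\sqcup\eta|_{S_k}$; you instead define $\Phi$ directly on the supercharacter basis toward $\mathcal{T}(\mathbf{d})$ and must verify the morphism property by hand, which the product formula $\mu_{\mathcal{P},\mathcal{Q}}(\chi_\eta\otimes\chi_\nu)=\chi_{\eta\sqcup\nu}$ makes immediate. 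The trade-off is exactly what you would expect: the universal-property route gets multiplicativity for free and concentrates the work in the bijectivity/unique-factorization step, while your route concentrates the work in checking that concatenation of atomic factorizations matches the product, with bijectivity following from the same factorization. Second, the paper works with the power sum basis and remarks that the supercharacter basis would do equally well, which is the choice you made. Your appeal to Proposition~\ref{propfree} is legitimate here since $\mathbf{scf}(U)$ is a connected, cocommutative Hopf submonoid over $\mathbb{C}$, and your cut/interval description of the atomic decomposition for linear orders is the correct uniqueness argument, so the proof goes through; just be aware that the boundary-versus-interior cut bookkeeping you defer (that the blocks of the concatenated tuple are recovered as the intervals between consecutive cuts, and no new cuts appear inside an atomic block) is the entire content of the bijectivity claim and should be written out rather than waved at.
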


In order to prove analogous results for $(\mathbf{nnfp},\mu,\Delta)$ and $(\mathbf{nnf}(U),\mu,\Delta)$, define
\[
        \text{NND}(\mathcal{P}) = \{\text{Atomic nonnesting }(\mathbb{F}_q,\mathcal{P})\text{-set partitions}\},
\]
and let $\mathbf{nnd}(\mathcal{P})$ be the vector space with basis $\text{NND}(\mathcal{P})$. Define the species $\mathbf{nnd}$ and $\mathbf{nndp}$ by
\begin{align*}
        \mathbf{nnd}[I] & = \bigoplus_{\substack{\mathcal{P}\text{ is a linear} \\\text{order on }I}}\mathbf{nnd}(\mathcal{P}) \quad \text{and} \\
        \mathbf{nndp}[I] & = \bigoplus_{\substack{\mathcal{P}\text{ is a } \\\text{poset on }I}}\mathbf{nnd}(\mathcal{P})
\end{align*}

\begin{proposition}\label{propnnfree} There exist isomorphisms of Hopf monoids
\[
        \mathbf{nnf}(U) \cong \mathcal{T}(\mathbf{nnd}) \quad \text{and} \quad \mathbf{nnfp} \cong \mathcal{T}(\mathbf{nndp}).
\]
\end{proposition}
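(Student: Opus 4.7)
The plan is to apply Proposition~\ref{propfree} to reduce both claimed isomorphisms to statements about monoid isomorphisms only. First I would verify the hypotheses of Proposition~\ref{propfree}: connectivity is immediate since $U_\mathcal{P}$ is the trivial group when $I = \emptyset$, forcing $\mathbf{nnfp}[\emptyset] = \mathbf{nnf}(U)[\emptyset] = \mathbb{C}$; cocommutativity follows from the observation that $\sigma_{\mathcal{P}|_T,\mathcal{P}|_S}(h,g)$ and $\sigma_{\mathcal{P}|_S,\mathcal{P}|_T}(g,h)$ produce the same element of $U_\mathcal{P}$, so $\Delta_{T,S}$ agrees with $\Delta_{S,T}$ up to the canonical swap. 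I also need $\mathbf{nndp}[\emptyset] = \mathbf{nnd}[\emptyset] = 0$, which I take as part of the convention (the unique set partition of the empty set is not declared atomic).

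The species morphism $\iota : \mathbf{nndp} \to \mathbf{nnfp}$ I would use sends each basis element $\chi_\eta$ (with $\eta$ an atomic nonnesting $(\mathbb{F}_q,\mathcal{P})$-set partition on $S$) to the same function $\chi_\eta \in \mathbf{nnf}(U_\mathcal{P}) \subseteq \mathbf{nnfp}[S]$. By the universal property of the free Hopf monoid, $\iota$ extends uniquely to a monoid homomorphism $\tilde\iota : \mathcal{T}(\mathbf{nndp}) \to \mathbf{nnfp}$. Using the formula $\mu_{\mathcal{P},\mathcal{Q}}(\chi_\eta \otimes \chi_\nu) = \chi_{\eta \sqcup \nu}$ from the previous subsection, $\tilde\iota$ carries a basis tensor $\chi_{\eta_1} \otimes \cdots \otimes \chi_{\eta_k}$ (indexed by an ordered set partition $(S_1,\dots,S_k)$ of $I$, atomic $\eta_i$, and posets $\mathcal{P}_i$ on $S_i$) to $\chi_{\eta_1 \sqcup \cdots \sqcup \eta_k} \in \mathbf{nnf}(U_{\mathcal{P}_1 \cdot \cdots \cdot \mathcal{P}_k})$.

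The heart of the proof is showing $\tilde\iota$ is bijective, which is equivalent to the following unique decomposition claim: every pair $(\mathcal{P},\eta)$ with $\mathcal{P}$ a poset on $I$ and $\eta \in \text{NN}(\mathcal{P},q)$ admits a unique ordered set partition $(S_1,\dots,S_k)$ of $I$ such that $\mathcal{P} = \mathcal{P}|_{S_1} \cdot \mathcal{P}|_{S_2} \cdots \mathcal{P}|_{S_k}$, such that $\eta = \eta|_{S_1} \sqcup \cdots \sqcup \eta|_{S_k}$, and such that each $\eta|_{S_i}$ is atomic as an $(\mathbb{F}_q, \mathcal{P}|_{S_i})$-set partition. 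To establish this, I would call a proper nonempty subset $S \subsetneq I$ an \emph{initial split} of $(\mathcal{P},\eta)$ if $s \prec_\mathcal{P} t$ for all $s \in S$ and $t \in I \setminus S$, and if no arc of $\eta$ has one endpoint in $S$ and the other in $I \setminus S$. The key lemma is that the collection of initial splits is closed under intersection; granting this, the minimal nonempty initial split (or $I$ itself when none exists, corresponding to atomicity of $(\mathcal{P},\eta)$) supplies the unique first block $S_1$, and induction on $|I|$ produces the remaining blocks and their uniqueness.

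The same decomposition, specialized to linear orders $\mathcal{P}$ (noting that each $\mathcal{P}|_{S_i}$ is then also a linear order, and each block $S_i$ is an interval in that order), gives the monoid isomorphism $\mathbf{nnf}(U) \cong \mathcal{T}(\mathbf{nnd})$. Proposition~\ref{propfree} then upgrades both monoid isomorphisms to isomorphisms of Hopf monoids. I expect the main technical obstacle to be verifying that initial splits are closed under intersection when $\mathcal{P}$ is a genuine poset (as opposed to a linear order), since incomparable elements of $\mathcal{P}$ can appear in either side of a split, complicating the compatibility check with $\mu$-style concatenation $\mathcal{P} = \mathcal{P}|_S \cdot \mathcal{P}|_{I \setminus S}$; the nonnesting hypothesis on $\eta$ should ensure that no obstruction arises from the arc structure.
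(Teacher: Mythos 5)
Your proposal is correct and follows essentially the same route as the paper: extend the species map on atomic nonnesting $(\mathbb{F}_q,\mathcal{P})$-set partitions to a monoid morphism out of the free Hopf monoid via the universal property, prove bijectivity through the unique factorization of $(\mathcal{P},\eta)$ into atomic pieces, and invoke Proposition~\ref{propfree} to upgrade to a Hopf monoid isomorphism. The only differences are cosmetic: you send atoms to supercharacters rather than power sums (which the paper's closing remark explicitly notes is equivalent, since $\mu_{\mathcal{P},\mathcal{Q}}(\chi_\eta\otimes\chi_\nu)=\chi_{\eta\sqcup\nu}$ mirrors $\mu_{\mathcal{P},\mathcal{Q}}(p_\eta\otimes p_\nu)=p_{\eta\sqcup\nu}$), and you supply a proof of the unique atomic decomposition via initial splits -- which in fact form a chain under inclusion, so your intersection-closure lemma holds -- a step the paper asserts without detail.
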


\begin{proof} We will only prove that there exists an isomorphism $\mathbf{nnfp} \cong \mathcal{T}(\mathbf{nndp})$, the other result is analogous.

\bigbreak

Define a morphism of species
\begin{align*}
       \widetilde{\varphi}: \mathbf{nndp} & \to \mathbf{nnfp} \\
        \eta & \mapsto \lambda_\eta,
\end{align*}
where $\lambda_\eta$ is the power sum basis element of $\mathbf{nnfp}$ defined in Section~\ref{secpsb}. As $\mathcal{T}(\mathbf{nndp})$ is free, by \cite[Theorem 11.4]{MR2724388} this morphism uniquely extends to a morphism of monoids
\[
        \varphi:\mathcal{T}(\mathbf{nndp})  \to \mathbf{nnfp}.
\]
By Proposition~\ref{propfree}, it suffices to show that $\varphi$ is a bijection.

\bigbreak

Given a poset $\mathcal{P}$ on $I$ and a nonnesting $(\mathbb{F}_q,\mathcal{P})$-set partition $\eta$, we can uniquely write
\[
        \eta = \eta|_{S_1} \sqcup \eta|_{S_2} \sqcup \hdots \sqcup \eta|_{S_k}
\]
such that $\mathcal{P} = \mathcal{P}|_{S_1} \cdot \mathcal{P}|_{S_2} \cdot \hdots \cdot \mathcal{P}|_{S_k}$ and each $\eta|_{S_i}$ is atomic. In other words,
\[
        \lambda_\eta = \mu_{S_1,S_2,\hdots,S_k}(\lambda_{\eta|_{S_1}} \otimes \lambda_{\eta|_{S_2}}\otimes \hdots \otimes \lambda_{\eta|_{S_k}}),
\]
and this decomposition is unique. It follows that $\varphi$ is an isomorphism, and $\mathbf{nnfp} \cong \mathcal{T}(\mathbf{nndp})$ as Hopf monoids.
\end{proof}

\begin{remark} In the proof of Proposition~\ref{propnnfree} we could have used the supercharacter basis instead of the power sum basis as the combinatorics of the product is identical with respect to these bases.
\end{remark}

\section{Further directions}

In \cite{MR2880223}, Aguiar et al. show that for $q=2$, the Hopf algebra of superclass functions of $UT_n(\mathbb{F}_q)$ is isomorphic to $\text{NCSym}(X)$, the Hopf algebra of symmetric functions in noncommuting variables. Bergeron--Thiem produce a coarser supercharacter theory of $UT_n(\mathbb{F}_q)$ in \cite{MR3078055}, from which they construct a Hopf algebra that is isomorphic to $\text{NCSym}(X)$ for any $q$. The coarsening is accomplished by combining supercharacters and superclasses that are indexed by $\mathbb{F}_q$-set partitions of the same shape (but with different labels). This method also produces a coarser version of the nonnesting supercharacter theory and a corresponding Hopf algebra; applying the isomorphism from \cite{MR2880223}, we obtain a subalgebra of $\text{NCSym}(X)$. It would be interesting to know if there is a nice description of this subalgebra with respect to one of the usual bases of $\text{NCSym}(X)$.

\bigbreak

The algebra group supercharacter theory has been generalized to the unipotent orthogonal, symplectic, and unitary groups by Andr\'e--Neto in \cite{MR2457229,MR2264135,MR2537684} and the author in \cite{andrews1}. There are several possible ways to define a nonnesting supercharacter theory in these types, and the author plans to explore this in a later paper.

\section{Acknowledgements}

I would like to thank Nat Thiem for his numerous helpful suggestions and insights.

\bibliography{bibfile}
	\bibliographystyle{plain}

\end{document}